\newtheorem{theorem}{Theorem}[section]
\newtheorem{proposition}[theorem]{Proposition}
\newtheorem{definition}[theorem]{Definition}
\newtheorem{lemma}[theorem]{Lemma}
\newtheorem{corollary}[theorem]{Corollary}
\newtheorem{remark}[theorem]{Remark}
\newcommand{\qedsymb}{\hfill{\rule{2mm}{2mm}}}
\renewenvironment{proof}[1][]{\begin{trivlist}
\item[\hspace{\labelsep}{\bf\noindent Proof#1:\/}] }{\qedsymb\end{trivlist}}
\def\calF{{\cal F}}
\def\calH{{\cal H}}
\def\Z{{\mathbb{Z}}}
\def\R{\mathbb{R}}
\def\C{\mathbb{C}}
\newcommand\Prob[2]{{\Pr_{#1}\left[ {#2} \right]}}
\newcommand{\SAT}{\mathsf{SAT}}
\newcommand{\NP}{\mathsf{NP}}
\renewcommand{\P}{\mathsf{P}}
\newcommand{\od}{\overline{\xi}}
\renewcommand{\epsilon}{\varepsilon}
\newcommand{\rank}{\mathop{\mathrm{rank}}}
\newcommand{\minrank}{\mathop{\mathrm{minrk}}}
\newcommand{\linspan}{\mathop{\mathrm{span}}}
\newcommand{\Fset}{\mathbb{F}}         
\begin{document}

\title{{\bf Local Orthogonality Dimension}}

\author{
Inon Attias\thanks{School of Computer Science, The Academic College of Tel Aviv-Yaffo, Tel Aviv 61083, Israel.
} \qquad
Ishay Haviv\thanks{School of Computer Science, The Academic College of Tel Aviv-Yaffo, Tel Aviv 61083, Israel. Research supported in part by the Israel Science Foundation (grant No.~1218/20).
}
}

\date{}

\maketitle

\begin{abstract}
An orthogonal representation of a graph $G$ over a field $\Fset$ is an assignment of a vector $u_v \in \Fset^t$ to every vertex $v$ of $G$, such that $\langle u_v,u_v \rangle \neq 0$ for every vertex $v$ and $\langle u_v,u_{v'} \rangle = 0$ whenever $v$ and $v'$ are adjacent in $G$. The locality of the orthogonal representation is the largest dimension of a subspace spanned by the vectors associated with a closed neighborhood in the graph.
We introduce a novel graph parameter, called the {\em local orthogonality dimension}, defined for a given graph $G$ and a given field $\Fset$, as the smallest possible locality of an orthogonal representation of $G$ over $\Fset$. This is a local variant of the well-studied orthogonality dimension of graphs, introduced by Lov{\'a}sz (Trans. Inf. Theory,~1978), analogously to the local variant of the chromatic number introduced by Erd\"{o}s et al.~(Discret. Math.,~1986).

We investigate the usefulness of topological methods for proving lower bounds on the local orthogonality dimension.
Such methods are known to imply tight lower bounds on the chromatic number of several graph families of interest, such as Kneser graphs and Schrijver graphs.
We prove that graphs for which topological methods imply a lower bound of $t$ on their chromatic number have local orthogonality dimension at least $\lceil t/2 \rceil +1$ over every field, strengthening a result of Simonyi and Tardos on the local chromatic number (Combinatorica,~2006). We show that for certain graphs this lower bound is tight, whereas for others, the local orthogonality dimension over the reals is equal to the chromatic number. More generally, we prove that for every complement of a line graph, the local orthogonality dimension over $\R$ coincides with the chromatic number.
This strengthens a recent result by Daneshpajouh, Meunier, and Mizrahi, who proved that the local and standard chromatic numbers of these graphs are equal (J.~Graph Theory,~2021).
As another extension of their result, we prove that the local and standard chromatic numbers are equal for some additional graphs, from the family of Kneser graphs.
We also study the computational aspects of the local orthogonality dimension and show that for every integer $k \geq 3$ and a field $\Fset$, it is $\NP$-hard to decide whether the local orthogonality dimension of an input graph over $\Fset$ is at most $k$.
We finally present an application of the local orthogonality dimension to the index coding problem from information theory, extending a result of Shanmugam, Dimakis, and Langberg (ISIT,~2013).

\end{abstract}

\newpage
\section{Introduction}

In this work, we introduce a novel graph parameter called the {\em local orthogonality dimension}.
We investigate it from combinatorial and computational perspectives and relate it to the index coding problem from information theory.
We start with an overview on several related graph parameters and then present our contribution.

\paragraph*{Chromatic number \& local chromatic number.}
Graph coloring is one of the most fundamental and popular topics in graph theory.
A proper coloring of a (simple undirected) graph $G$ is an assignment of a color to each vertex such that every two adjacent vertices receive distinct colors. The {\em chromatic number} of $G$, denoted by $\chi(G)$, is the minimum number of colors needed for a proper coloring of $G$.
A variant of the chromatic number, known as the {\em local chromatic number}, was introduced in 1986 by Erd\"{o}s, F\"{u}redi, Hajnal, Komj\'{a}th, R\"{o}dl, and Seress~\cite{ErdosLocal}.
In contrast to the standard chromatic number, here the objective is not to minimize the total number of colors of a proper coloring, but the number of colors locally viewed by the vertices. More precisely, the {\em locality} of a proper coloring of a graph $G$ is the maximum number of colors that appear in a closed neighborhood of a vertex of $G$ (where a closed neighborhood of a vertex consists of the vertex itself and its neighbors). The local chromatic number of $G$, denoted by $\chi_l(G)$, is defined as the smallest possible locality of a proper coloring of $G$.
Every proper coloring of a graph $G$ with $\chi(G)$ colors obviously has locality at most $\chi(G)$, hence $\chi_l(G) \leq \chi(G)$ for every graph $G$.
It was shown in~\cite{ErdosLocal} that the gap between the two quantities can be arbitrarily large, even for graphs with local chromatic number $3$.
From a computational point of view, for every $k \geq 3$, it is $\NP$-hard to decide whether an input graph $G$ satisfies $\chi_l(G) \leq k$~\cite{Osang}, and strong hardness of approximation results are known to follow from those of the standard chromatic number.

In 1978, Lov{\'a}sz~\cite{LovaszKneser} developed an exciting approach for proving lower bounds on the chromatic number of graphs relying on the Borsuk-Ulam theorem from algebraic topology~\cite{Borsuk33}. The approach was applied in~\cite{LovaszKneser} to prove a conjecture of Kneser~\cite{Kneser55}, saying that for all integers $n \geq 2k$, it holds that $\chi(K(n,k)) = n-2k+2$, where $K(n,k)$ is the {\em Kneser graph} defined as follows. Its vertices are all the $k$-subsets of $[n] = \{1,2,\ldots,n\}$, and two such sets are adjacent in the graph if they are disjoint.
The result of~\cite{LovaszKneser} was strengthened by Schrijver~\cite{Schrijver78}, who proved that the subgraph of $K(n,k)$ induced by the $k$-subsets of $[n]$ that do not include two consecutive integers modulo $n$ has the same chromatic number. This subgraph is known as the {\em Schrijver graph} and is denoted by $S(n,k)$.
The topological approach of~\cite{LovaszKneser} was further generalized to obtain lower bounds on the chromatic number of general graphs (see, e.g.,~\cite{MatousekZ04,MatousekBook07}).
We say that a graph $G$ is {\em topologically $t$-chromatic} if a certain topological argument, specified later, implies that $\chi(G) \geq t$.
This terminology, borrowed from Simonyi and Tardos~\cite{SimonyiT06}, allows us to describe statements in their full generality, and yet, its precise definition is not essential throughout this introduction, so it is deferred to Section~\ref{sec:top_bound} (see Remark~\ref{remark:topologically}). The reader is encouraged to think of topologically $t$-chromatic graphs as the graphs $K(n,k)$ and $S(n,k)$ with $t=n-2k+2$.
Other known families of topologically $t$-chromatic graphs $G$ satisfying $\chi(G)=t$ are Borsuk graphs, generalized Mycielski graphs, and rational complete graphs. For additional examples, see, e.g.,~\cite[Section~3.3]{SimonyiT07}.

In a line of works initiated by Simonyi and Tardos~\cite{SimonyiT06}, topological methods were also employed to prove lower bounds on the local chromatic number.
A result of Simonyi, Tardif, and Zsb{\'{a}}n~\cite{SimonyiTZ13}, extending a previous result of~\cite{SimonyiT06}, asserts the following (see Remark~\ref{remark:topologically}).
\begin{theorem}[\cite{SimonyiT06,SimonyiTZ13}]\label{thm:SimonyiT_t/2}
For every topologically $t$-chromatic graph $G$ with at least one edge,
\[\chi_l(G) \geq \lceil t/2 \rceil +1.\]
\end{theorem}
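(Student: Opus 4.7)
The plan is to derive the theorem from the \emph{zig-zag theorem} of Simonyi and Tardos, which asserts that in every proper coloring of a topologically $t$-chromatic graph $G$ (with colors viewed as integers) one can find a completely multicolored complete bipartite subgraph $K_{\lceil t/2\rceil,\lfloor t/2\rfloor}$, i.e.\ one whose $t$ vertices receive $t$ pairwise distinct colors. Once this is granted, the local chromatic inequality is immediate: given a proper coloring $c$ of $G$ of locality $\chi_l(G)$, extract such a bipartite subgraph $K$; pick any vertex $v$ on the smaller side of $K$ (of size $\lfloor t/2\rfloor$); its closed neighborhood in $G$ contains $v$ itself together with all $\lceil t/2\rceil$ vertices on the other side of $K$, and these $\lceil t/2\rceil+1$ vertices carry $\lceil t/2\rceil+1$ distinct colors. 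The assumption that $G$ has at least one edge ensures that $t\ge 2$ is meaningful; for $t\le 1$ the inequality $\chi_l(G)\ge \lceil t/2\rceil+1$ is trivial.

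To prove the zig-zag theorem itself, the route is topological. Fix a proper coloring $c\colon V(G)\to\{1,\ldots,m\}$ and build a $\Z/2$-equivariant simplicial map from an appropriate box complex $B_0(G)$ (on which $\Z/2$ acts by swapping the two sides of a bipartite simplex) to the boundary $\partial\lozenge^m$ of the $m$-dimensional cross-polytope, sending a vertex $v$ on the ``positive'' side to $+e_{c(v)}$ and on the ``negative'' side to $-e_{c(v)}$ and extending linearly; this is well-defined because $c$ is proper, so the $\pm e_i$ produced by a single simplex of $B_0(G)$ never include both $+e_i$ and $-e_i$. Precomposing with the $\Z/2$-equivariant map $S^{t-2}\to B_0(G)$ supplied by topological $t$-chromaticity yields a $\Z/2$-equivariant map $f\colon S^{t-2}\to\partial\lozenge^m$. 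A Ky~Fan type theorem applied to $f$ forces the existence of a point in $S^{t-2}$ whose image meets a $(t-1)$-face of $\partial\lozenge^m$ spanned by $t$ vectors of the form $+e_{i_1},-e_{i_2},+e_{i_3},\ldots$ with $i_1<i_2<\cdots<i_t$. Pulling back through the box complex, this face is witnessed by vertex sets $U,W\subseteq V(G)$ of sizes $\lceil t/2\rceil$ and $\lfloor t/2\rfloor$ forming a complete bipartite subgraph in $G$ with $c(U)\cup c(W)=\{i_1,\ldots,i_t\}$, precisely the multicolored $K_{\lceil t/2\rceil,\lfloor t/2\rfloor}$ required.

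The main obstacle is the Ky~Fan step, where topological $t$-chromaticity is translated into combinatorial structure of an arbitrary proper coloring; everything else is bookkeeping, and the passage from the colorful bipartite subgraph to the locality bound is essentially one sentence. A secondary technical point is choosing the right box complex (and correct conventions for its $\Z/2$-action and vertex set) so that both the topological lower bound from the hypothesis and the equivariant map to $\partial\lozenge^m$ induced by the coloring are available simultaneously; this is exactly the setup in which the result is formulated in~\cite{SimonyiT06,SimonyiTZ13}.
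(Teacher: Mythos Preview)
Your argument is correct and is precisely the original Simonyi--Tardos proof via the zig-zag theorem: find a completely multicolored $K_{\lceil t/2\rceil,\lfloor t/2\rfloor}$ in any proper coloring and read off $\lceil t/2\rceil+1$ colors in the closed neighborhood of a vertex on the smaller side.

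The paper itself does not give a standalone proof of Theorem~\ref{thm:SimonyiT_t/2}; it quotes the result and instead proves the stronger Theorem~\ref{thm:LocalODtopo} (the bound $\od_l(G,\Fset)\ge\lceil t/2\rceil+1$), from which Theorem~\ref{thm:SimonyiT_t/2} follows at once via $\od_l(G,\Fset)\le\chi_l(G)$. The structure of that proof is parallel to yours but with the zig-zag theorem replaced by the Alishahi--Meunier theorem (Theorem~\ref{thm:AlishahiM}): rather than a multicolored bipartite subgraph in a proper coloring, one finds a $K_{\lfloor t/2\rfloor,\lceil t/2\rceil}$ whose two sides each carry linearly independent vectors in any \emph{independent representation}; the closed-neighborhood step is then identical. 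So your route is the classical combinatorial one, while the paper's is its algebraic generalization, which buys the stronger conclusion about local orthogonality dimension at no extra cost once Theorem~\ref{thm:AlishahiM} is available as a black box.
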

\noindent
For topologically $t$-chromatic graphs $G$ with $\chi(G)=t$, such as the graphs $K(n,k)$ and $S(n,k)$ with $t=n-2k+2$, Theorem~\ref{thm:SimonyiT_t/2} implies that $\chi_l(G)$ lies in the interval $[\lceil t/2 \rceil +1,t]$.
Interestingly, it turns out that both the lower and upper bounds can be tight.
For example, it was shown in~\cite[Section~4.2]{SimonyiT06} that for Schrijver graphs $S(n,k)$ with an odd $t=n-2k+2 > 2$ satisfying $n \geq 4t^2 -7t$, the local chromatic number is $\lceil t/2 \rceil +1$ (see~\cite[Remark~4]{SimonyiT06} and~\cite{SimonyiTV2005} for such statements with relaxed conditions).
Another example  for graphs that attain the lower bound, given in~\cite[Section~4.3]{SimonyiT06}, is from the family of generalized Mycielski graphs.
On the other hand, for the Schrijver graphs $S(n,2)$, it was shown in~\cite[Section~4.2]{SimonyiT06} that the local chromatic number is equal to the chromatic number, that is, $\chi_l(S(n,2)) = \chi(S(n,2)) = n-2$ for all $n \geq 4$, hence the upper bound of $t$ is tight in this case.
This result was recently generalized by Daneshpajouh, Meunier, and Mizrahi~\cite{DMMLine21}, as follows.
\begin{theorem}[\cite{DMMLine21}]\label{thm:DMMLine21}
If $G$ is the complement of a line graph, then $\chi_l(G) = \chi(G)$.
\end{theorem}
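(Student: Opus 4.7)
The plan is to prove the nontrivial inequality $\chi_l(G) \geq \chi(G)$, since $\chi_l(G) \leq \chi(G)$ is immediate (any optimal proper coloring has locality at most its number of colors). The strategy is to show that from any proper coloring $c$ of $G = \overline{L(H)}$ with locality $\ell$, one can extract a proper coloring of $G$ using at most $\ell$ colors, thereby forcing $\chi(G) \leq \ell$.

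\textbf{Step 1 (Translation).} A proper coloring of $G = \overline{L(H)}$ is exactly an edge-coloring $c : E(H) \to C$ in which every color class is an intersecting family of edges of $H$. A standard observation is that an intersecting family of three or more edges in a simple graph is a star, so each color class is either a star centered at some vertex or a subset of a triangle. In particular, $\chi(G)$ equals the minimum number of intersecting families needed to partition $E(H)$.

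\textbf{Step 2 (Localization).} Let $c$ be a proper coloring of $G$ with locality $\ell$ and let $e^* = \{u,v\}$ be an edge attaining the maximum, so $|c(N_G[e^*])| = \ell$. Denote this color set by $S^*$. Every edge outside $N_G[e^*]$ shares a vertex with $e^*$ but differs from $e^*$, hence lies in one of the punctured stars $E_u' = \{f : u \in f,\ v \notin f,\ f \neq e^*\}$ or $E_v' = \{f : v \in f,\ u \notin f,\ f \neq e^*\}$.

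\textbf{Step 3 (Recoloring into $S^*$).} I would construct a new proper coloring $c'$ that agrees with $c$ on $N_G[e^*]$ and assigns every edge $f \in E_u' \cup E_v'$ a color from $S^*$. The structural claim is that for every such $f$ there exists $\alpha \in S^*$ whose current class $C_\alpha$ can be enlarged by $f$ while remaining intersecting. The natural starting point is $c(e^*) \in S^*$: its class contains $e^*$, so it is either a star at $u$, a star at $v$, or a triangle containing the edge $\{u,v\}$. In the first two cases one of the punctured stars is absorbed directly into this class. For the remaining edges (and for the triangle case) one uses other members of $S^*$: since every edge $g$ disjoint from $e^*$ contributes a color $c(g) \in S^*$, any color whose class is a star at $u$ or $v$ can be used as a ``pivot'' to absorb $E_u'$ or $E_v'$, while properness against edges in $N_G[e^*]$ is automatic because $c(e^*) \neq c(g)$ for every $g$ disjoint from $e^*$.

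\textbf{Step 4 (Conclusion).} Each new color class remains an intersecting family, so $c'$ is a proper coloring of $G$ using only $|S^*| = \ell$ colors. Hence $\chi(G) \leq \ell$, which combined with $\ell = \chi_l(G)$ in the worst case yields $\chi_l(G) \geq \chi(G)$, as required.

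\textbf{Main obstacle.} The crux is Step 3: verifying that $S^*$ collectively supplies compatible colors for every edge of $E_u' \cup E_v'$ and that the simultaneous reassignment preserves intersecting families across \emph{all} enlarged classes. The hard case is when $c(e^*)$'s class is a triangle (so no star at $u$ or $v$ is inherited directly from $e^*$) and one must rely on the additional colors contributed by edges disjoint from $e^*$. Handling the punctured stars at $u$ and at $v$ simultaneously, while small classes in $S^*$ interact subtly with the stars-vs-triangle dichotomy, is where the technical work concentrates; a careful choice of $e^*$ (for example, an edge whose color class is itself a star when possible) is likely needed to reduce all configurations to the tractable case.
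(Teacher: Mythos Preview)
The paper does not give its own proof of this statement; it quotes it from \cite{DMMLine21} and instead proves the stronger Theorem~\ref{thm:xi_l_K(F)} ($\od_l(G,\R)=\chi(G)$ for complements of line graphs), from which the present statement follows via $\od_l(G,\R)\le\chi_l(G)\le\chi(G)$. That proof proceeds by \emph{induction on the size $n$ of the ground set}: one identifies an element $i\in[n]$, passes to the subfamily $\calF'=\{A\in\calF: i\notin A\}$, shows that the induced representation of $G'=K(\calF')$ has locality at most $\ell-1$, applies the inductive hypothesis to get $\chi(G')\le\ell-1$, and finishes with $\chi(G)\le\chi(G')+1$ (the edges through $i$ form a single intersecting class). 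The same inductive mechanism, with colorings in place of orthogonal representations, is essentially the argument in \cite{DMMLine21}.

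Your plan is a genuinely different, one-shot recoloring: fix a maximum-locality vertex $e^*=\{u,v\}$, keep $c$ on $N_G[e^*]$, and try to recolor the punctured stars $E'_u,E'_v$ using only the palette $S^*$. The difficulty you flag in Step~3 is not a technicality but a real obstruction. Observe that $C_{c(e^*)}\cap N_G[e^*]=\{e^*\}$, so $c(e^*)$ can absorb at most one of the two punctured stars (say $E'_u$). Every other color $\alpha\in S^*$ has in $N_G[e^*]\setminus\{e^*\}$ at least one edge $g$ disjoint from $\{u,v\}$; hence an edge $\{v,y\}\in E'_v$ can receive color $\alpha$ only if $y\in g$ for \emph{every} such $g$. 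There is no reason this system of constraints, taken over all $y$ with $\{v,y\}\in E(H)$, has a simultaneous solution inside $S^*$; indeed, for a neighbor $y$ of $v$ whose only other $H$-edges go to $u$, no color in $S^*\setminus\{c(e^*)\}$ is available, and you have already spent $c(e^*)$ on $E'_u$. Your suggested fix (choose $e^*$ more carefully) does not resolve this, because the obstruction depends on the global shape of the color classes meeting $N_G[e^*]$, not just on $e^*$. In short, the palette of a single closed neighborhood does not carry enough structure to recolor both punctured stars at once; the inductive route sidesteps this by peeling off one ground-set element and reducing the target by $1$ rather than trying to hit $\ell$ in a single step.
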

\noindent
Recall that the {\em line graph} of a graph $H$ has a vertex for every edge of $H$ and two vertices are adjacent if as edges of $H$ they share a vertex. Notice that the Schrijver graph $S(n,2)$ is the complement of the line graph of the complement of a cycle on $n$ vertices.

\paragraph*{Orthogonality dimension.}
A {\em $t$-dimensional orthogonal representation} of a graph $G=(V,E)$ over a field $\Fset$ is an assignment of a vector $u_v \in \mathbb{F}^t$ with $\langle u_v,u_v \rangle \neq 0$ to every vertex $v \in V$, such that $\langle u_v, u_{v'} \rangle = 0$ whenever $v$ and $v'$ are adjacent vertices in $G$.
Here, for two vectors $x,y \in \Fset^t$, we let $\langle x,y \rangle = \sum_{i=1}^{t}{x_i y_i}$ denote the standard inner product of $x$ and $y$ over $\Fset$, where over the complex field $\C$, it can be replaced by $\langle x,y \rangle = \sum_{i=1}^{t}{x_i \overline{y_i}}$.
The {\em orthogonality dimension} of a graph $G$ over a field $\Fset$, denoted by $\od(G,\Fset)$, is the smallest integer $t$ for which there exists a $t$-dimensional orthogonal representation of $G$ over $\Fset$.\footnote{Orthogonal representations of graphs are sometimes defined in the literature as orthogonal representations of the complement, namely, the definition requires vectors associated with {\em non-adjacent} vertices to have zero inner product. Here we use the other definition, but one may view the notation $\od(G,\Fset)$ as standing for $\xi(\overline{G}, \Fset)$.}
The research on orthogonal representations and on the orthogonality dimension was initiated by Lov{\'a}sz~\cite{Lovasz79} in the study of the Shannon capacity of graphs in information theory. Over the years, they have been found useful for applications in several areas of theoretical computer science, e.g., algorithms~\cite{AlonK98}, circuit complexity~\cite{Valiant77}, and communication complexity~\cite{BrietZ17}.
As for the computational perspective, it follows from a work of Peeters~\cite{Peeters96} that for every $k \geq 3$ and a field $\Fset$, it is $\NP$-hard to decide whether an input graph $G$ satisfies $\od(G,\Fset) \leq k$ (see~\cite{LangbergS08} and~\cite{GolovnevH21} for related hardness of approximation results).
It is worth mentioning here another graph parameter, called {\em minrank} and denoted by ${\minrank}_\Fset (G)$, that was introduced by Haemers in~\cite{Haemers81} motivated by questions in information theory (see Definition~\ref{def:minrank}).
This graph parameter is closely related to the orthogonality dimension, and in particular, it satisfies $\minrank_\Fset(G) \leq \od(\overline{G},\Fset)$, where $\overline{G}$ stands for the complement of $G$ (see Section~\ref{sec:minrank}).

Orthogonal representations can be viewed as a generalization of proper colorings of graphs. Indeed, a proper coloring of a graph with $t$ colors naturally induces a $t$-dimensional orthogonal representation over every field $\Fset$, assigning to the vertices of the $i$th color class the $i$th vector $e_i$ of the standard basis of $\Fset^t$. This implies that for every graph $G$ and a field $\Fset$, $\od(G,\Fset) \leq \chi(G)$.
For the opposite direction, a $t$-dimensional orthogonal representation of a graph $G$ over a finite field $\Fset$ of size $q$ forms a proper coloring of $G$ with at most $q^t$ colors, hence
\begin{eqnarray}\label{eq:chrom_od}
\od(G,\Fset) \geq \log_{q}{\chi(G)}.
\end{eqnarray}
Over the reals, it is not difficult to show that $\od(G,\R) \geq \log_3{\chi(G)}$, and it turns out that there are graphs for which this lower bound on the orthogonality dimension is tight up to a multiplicative constant (see, e.g.,~\cite[Proposition~2.2]{HavivMFCS19}).
Recently, topological methods were found beneficial for proving lower bounds on the orthogonality dimension and on the minrank parameter of graphs.
This approach was initiated in~\cite{Haviv19} and successfully extended by Alishahi and Meunier~\cite{AlishahiM21}, who proved that for every topologically $t$-chromatic graph $G$ with at least one edge and for every field $\Fset$, it holds that
\begin{eqnarray}\label{eq:AlishahiM}
\od(G,\Fset) \geq t \mbox{~~~and~~~} {\minrank}_\Fset(\overline{G}) \geq \lceil t/2 \rceil +1.
\end{eqnarray}

\paragraph*{Index coding.}
The {\em index coding} problem, introduced by Birk and Kol~\cite{BirkKol98} and further developed by Bar-Yossef, Birk, Jayram, and Kol~\cite{BBJK06}, is a well-studied problem in zero-error information theory.
In this problem, a sender holds an $n$-symbol message $x \in \Sigma^n$ over an alphabet $\Sigma$ and wishes to broadcast information to $n$ receivers $R_1,\ldots,R_n$ in a way that enables each receiver $R_i$ to retrieve the $i$th symbol $x_i \in \Sigma$.  For this purpose, the receivers are allowed to use some side information they have in advance comprising a subset of the symbols of $x$. The side information map is naturally represented by a directed graph $G$ on the vertex set $[n]$ that includes a directed edge $(i,j)$ if the receiver $R_i$ knows $x_j$. For simplicity, we will consider here symmetric side information maps and will thus refer to $G$ as undirected. For a given side information graph $G$, the goal is to design an encoding function that maps any $n$-symbol message $x \in \Sigma^n$ to a broadcast information in $\Sigma^\ell$, where $\ell$ is as small as possible, so that the receivers are able to retrieve their messages based on this information and on the side information available to them.
As example, consider the case where $\Sigma = \Z_m$ and $G$ is the complete graph on $n$ vertices, meaning that every receiver $R_i$ knows all the symbols $x_j$ with $j \in [n] \setminus \{i\}$. Observe that it suffices in this case to broadcast a single symbol of $\Sigma$ that consists of the sum $\sum_{i=1}^{n}{x_i}$ modulo $m$ to enable each receiver to retrieve its message.

A significant attention was given in the literature for the setting of {\em linear} index coding. Here, the alphabet $\Sigma$ is a field $\mathbb{F}$, and the sender is allowed to apply a linear encoding function over $\Fset$ to map the message $x \in \Fset^n$ to the transmitted broadcast information. It was shown in~\cite{BBJK06} that the minimum length of a linear index code over a field $\Fset$ for a side information graph $G$ is precisely the aforementioned minrank parameter of $G$ over $\Fset$.
It is easy to see that ${\minrank}_\Fset (G) \leq \chi(\overline{G})$ for every graph $G$ and a field $\Fset$, as follows from the index code that consists of the sum of the symbols of every clique in a minimum clique cover of $G$. The linear index coding problem was related to the local chromatic number by Shanmugam, Dimakis, and Langberg~\cite{shanmugamKDALM2013}, who proved that ${\minrank}_\Fset (G) \leq \chi_l(\overline{G})$ whenever the field $\Fset$ is sufficiently large (see Proposition~\ref{prop:local_minrank_n} for a precise statement). For the binary field $\Fset_2$, which is of special interest for the index coding problem, they proved the following.
\begin{theorem}[\cite{shanmugamKDALM2013}]\label{thm:minrk_SDL}
For every graph $G$ on $n$ vertices, $\minrank_{\Fset_2}(G) \leq \chi_l (\overline{G}) + 2 \cdot \log_2 n$.
\end{theorem}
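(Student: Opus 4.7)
The plan is to construct a low-rank matrix over $\Fset_2$ that fits $G$ by lifting a locality-$c$ proper coloring of $\overline{G}$ to random vectors, paying only an extra $2\log_2 n$ dimensions to guarantee a suitable linear-independence event. I would use the standard matrix characterization of minrank: $\minrank_{\Fset_2}(G) \leq d$ whenever one can assign vectors $A_v, B_v \in \Fset_2^d$ to the vertices so that $\langle B_v, A_v \rangle = 1$ for every $v$ and $\langle B_v, A_w \rangle = 0$ for every two distinct non-adjacent vertices $v, w$ of $G$; the $n \times n$ matrix with entries $M_{v,w} = \langle B_v, A_w \rangle$ then fits $G$ and has rank at most $d$.

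First I would fix a proper coloring $\mathrm{col}: V \to [N]$ of $\overline{G}$ witnessing $\chi_l(\overline{G}) = c$, so that for every $v \in V$ the set $L(v) := \{\mathrm{col}(w) \mid w \in N_{\overline{G}}[v]\}$ of colors appearing in the closed $\overline{G}$-neighborhood of $v$ has size at most $c$. Setting $d := c + 2\log_2 n$, I would then draw, independently for each color $i \in [N]$, a uniformly random vector $\phi(i) \in \Fset_2^d$, and define $A_v := \phi(\mathrm{col}(v))$.

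The core of the proof is a probabilistic claim: with positive probability, for every vertex $v$ simultaneously, the (at most $c$) vectors in $\{\phi(j) \mid j \in L(v)\}$ are linearly independent over $\Fset_2$. For a fixed $v$, by the standard chain-of-spans estimate, the probability of failure is at most $(2^c-1)/2^d \leq 2^{c-d} = 1/n^2$, and a union bound over the $n$ vertices bounds the total failure probability by $1/n < 1$. This is really the only non-trivial ingredient, and the extra $2 \log_2 n$ dimensions are there precisely to make this union bound go through; if one were content to avoid just a single subspace instead of requiring full linear independence, a more careful analysis might save a factor in the logarithm, but aiming for full independence gives the cleanest argument matching the stated bound.

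Given any $\phi$ satisfying the independence event, the linear independence of $\{\phi(j) \mid j \in L(v)\}$ supplies, for each $v$, a dual vector $B_v \in \Fset_2^d$ with $\langle B_v, \phi(\mathrm{col}(v)) \rangle = 1$ and $\langle B_v, \phi(j) \rangle = 0$ for every $j \in L(v) \setminus \{\mathrm{col}(v)\}$. To conclude, I would verify that $M_{v,w} = \langle B_v, A_w \rangle$ fits $G$: the diagonal is $1$, and if $w \neq v$ is non-adjacent to $v$ in $G$, then $w \in N_{\overline{G}}(v) \subseteq N_{\overline{G}}[v]$, so $\mathrm{col}(w) \in L(v)$, and since the coloring is proper $\mathrm{col}(w) \neq \mathrm{col}(v)$; hence $M_{v,w} = 0$. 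Because $\rank(M) \leq d$, the bound $\minrank_{\Fset_2}(G) \leq c + 2 \log_2 n$ follows.
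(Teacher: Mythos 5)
Your proof is correct and gives a valid probabilistic proof of the stated bound, but it takes a genuinely different route from the paper. Both arguments reduce, in essentially the same way, to constructing one vector per color so that for every vertex the vectors associated with the colors in its closed $\overline{G}$-neighborhood are linearly independent (you phrase this via the factorization $M_{v,w}=\langle B_v, A_w\rangle$; the paper phrases it via independent representations and Lemma~\ref{lemma:minrank}, see Proposition~\ref{prop:local_minrank_gen}). The divergence is in how those vectors are produced. You choose them uniformly at random and apply a union bound over the $n$ vertices, paying $2\log_2 n$ extra dimensions to make the failure probability $1/n^2$ per vertex. The paper instead uses a deterministic greedy construction (Lemma~\ref{lemma:Schulman}, attributed to Schulman): pick the vectors one by one, at each step dodging at most $h\cdot q^{\ell-1}$ forbidden vectors, which succeeds once the ambient dimension is $\ell + \lceil \log_q h\rceil$. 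That greedy argument yields the sharper bound $\minrank_{\Fset_2}(G)\le \chi_l(\overline G)+\lceil\log_2 n\rceil$ (Corollary~\ref{cor:localSDL}), strictly better than the $2\log_2 n$ you matched. In fact your own union bound is loose: with $d=c+\lceil\log_2 n\rceil$ the failure probability is $n(2^c-1)/2^d<1$, so the probabilistic argument recovers the improved constant as well. One small cosmetic point: $c+2\log_2 n$ need not be an integer, so you should take $d=c+\lceil\log_2 n\rceil$ (or $\lceil 2\log_2 n\rceil$) and then note $d\le c+2\log_2 n$; this does not affect the substance of the argument.
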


\subsection{Our Contribution}

In this work, we initiate the study of a novel graph parameter called the {\em local orthogonality dimension}.
This graph parameter is a local variant of the orthogonality dimension of graphs, analogously to the local variant of the chromatic number introduced in~\cite{ErdosLocal}.
In contrast to the standard orthogonality dimension, here we do not aim to minimize the dimension of the whole space of an orthogonal representation of the graph, but the dimensions of the subspaces spanned by the vectors associated with the vertices of closed neighborhoods.
The formal definition is given below.
Here, for a graph $G$ and a vertex $v$, we let $N(v)$ stand for the set of vertices adjacent to $v$ in $G$.

\begin{definition}\label{def:localOD}
The {\em locality} of an orthogonal representation $(u_v)_{v \in V}$ of a graph $G=(V,E)$ over a field $\Fset$ is the maximum dimension of a subspace spanned by the vectors of a closed neighborhood of a vertex, that is, $\max_{v \in V}{\dim(U_v)}$ where $U_v = \linspan (\{u_{v'} \mid v' \in \{v\} \cup N(v)\})$. The {\em local orthogonality dimension} of a graph $G$ over a field $\Fset$, denoted by $\od_l(G,\Fset)$, is the smallest integer $\ell$ for which there exists an orthogonal representation of $G$ over $\Fset$ with locality $\ell$.
\end{definition}

Aside from being a natural graph parameter to investigate, the study of the local orthogonality dimension is motivated by the connections, discovered by Shanmugam et al.~\cite{shanmugamKDALM2013}, between the index coding problem and the local chromatic number.
Indeed, the same authors showed in~\cite{ShanmugamDL14} that the known graph-theoretic combinatorial upper bounds on the index coding problem, even in the local framework, suffer from an inherent limitation. In contrast, it is known that for certain graphs, algebraic tools such as the orthogonality dimension and the minrank parameter provide significantly better upper bounds~\cite{LS07}. It would thus be of interest to explore the potential benefit of locality for such algebraic tools, as suggested by Definition~\ref{def:localOD}.

Let us already mention some basic observations on the local orthogonality dimension.
For every graph $G$ and a field $\Fset$, it clearly holds that
\begin{eqnarray}\label{eq:od_l_vs_od}
\od_l(G,\Fset) \leq \od(G,\Fset),
\end{eqnarray}
because every $t$-dimensional orthogonal representation of $G$ over $\Fset$ has locality at most $t$.
It further holds that
\begin{eqnarray}\label{eq:od_l_vs_chi_l}
\od_l(G,\Fset) \leq \chi_l(G),
\end{eqnarray}
as a proper coloring of $G$ with locality $\ell$ induces an orthogonal representation over any field $\Fset$ with locality $\ell$, simply by assigning the vector $e_i$ of the standard basis of $\Fset^\ell$ to the vertices of the $i$th color class.
Since there exist graphs $G$ with $\chi_l(G) = 3$ and arbitrarily large $\chi(G)$~\cite{ErdosLocal}, inequality~\eqref{eq:chrom_od} above implies that the orthogonality dimension can be arbitrarily large even when its local variant is $3$.

Our first result, proved in Section~\ref{sec:top_bound}, shows that topological methods can be used to obtain lower bounds on the local orthogonality dimension of graphs.
Its proof is a simple application of a result of~\cite{AlishahiM21}.
\begin{theorem}\label{thm:LocalODtopo}
For every topologically $t$-chromatic graph $G$ with at least one edge and for every field $\Fset$,
\[\od_l(G,\Fset) \geq \lceil t/2 \rceil +1.\]
\end{theorem}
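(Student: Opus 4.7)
The plan is to derive the theorem from the second inequality in~\eqref{eq:AlishahiM} of Alishahi and Meunier, namely $\minrank_\Fset(\overline{G}) \geq \lceil t/2 \rceil + 1$, which holds for every topologically $t$-chromatic graph $G$ over every field $\Fset$. The main step will be the inequality
\[
\minrank_{\Fset'}(\overline{G}) \leq \od_l(G,\Fset),
\]
where $\Fset'$ is any algebraically closed extension of $\Fset$. Combining this with~\eqref{eq:AlishahiM} applied over $\Fset'$ immediately yields $\od_l(G,\Fset) \geq \lceil t/2 \rceil + 1$, as required.

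To prove the inequality, let $(u_v)_{v \in V} \subseteq \Fset^d$ be an orthogonal representation of $G$ of locality $\ell = \od_l(G,\Fset)$, and regard it as an orthogonal representation over $\Fset'$; locality is preserved under the extension because linear (in)dependence of vectors with entries in $\Fset$ does not change when passing to $\Fset'$. For each $v \in V$ set $U_v^- = \linspan\{u_{v'} : v' \in N(v)\}$. Since $\langle u_v, u_v \rangle \neq 0$ and $\langle u_v, u_{v'} \rangle = 0$ for every $v' \in N(v)$, the vector $u_v$ cannot lie in $U_v^-$, which together with the locality bound gives $\dim U_v^- \leq \ell - 1$; in particular the subspace $(U_v^-)^\perp \subseteq (\Fset')^d$ has dimension at least $d - \ell + 1$.

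The goal now is to build a rank-$\ell$ matrix $M \in (\Fset')^{V \times V}$ serving as a minrank witness for $\overline{G}$, by forcing all of its rows to lie inside a common $\ell$-dimensional subspace. Concretely, I would pick a subspace $W \subseteq (\Fset')^d$ of dimension $\ell$ such that for every $v \in V$ the intersection $W \cap (U_v^-)^\perp$ is not contained in the hyperplane $\{x : \langle x, u_v \rangle = 0\}$. For any fixed $v$, the set of $W$'s failing this requirement forms a proper subvariety of the Grassmannian of $\ell$-dimensional subspaces of $(\Fset')^d$, because the failure condition is equivalent to $u_v$ lying in the subspace $W^\perp + U_v^-$, of dimension at most $(d-\ell) + (\ell-1) = d-1$. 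Since $\Fset'$ is infinite and only finitely many vertices are involved, a generic $W$ simultaneously avoids all the failure loci. Fixing such a $W$ and picking $\hat{f}_v \in W \cap (U_v^-)^\perp$ with $\langle \hat{f}_v, u_v \rangle \neq 0$ for each $v$, define $M_{v,v'} = \langle \hat{f}_v, u_{v'} \rangle$. By construction $M_{v,v} \neq 0$ and $M_{v,v'} = 0$ for every $\{v,v'\} \in E(G)$; moreover each row of $M$ is the image of the corresponding $\hat{f}_v \in W$ under the fixed linear map $x \mapsto (\langle x, u_{v'} \rangle)_{v' \in V}$, so the row space of $M$ has dimension at most $\dim W = \ell$, giving $\rank M \leq \ell$ and hence $\minrank_{\Fset'}(\overline{G}) \leq \ell$.

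The one step needing care is the generic-position argument on the Grassmannian, but this amounts to a standard dimension count over the algebraically closed field $\Fset'$ and poses no serious obstacle; everything else is a direct combination of linear algebra with the Alishahi–Meunier bound.
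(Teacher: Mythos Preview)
Your proposal is correct, but it takes a genuinely different route from the paper's proof.

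The paper proceeds directly from the \emph{structural} result of Alishahi--Meunier (Theorem~\ref{thm:AlishahiM} in the paper): any orthogonal representation is an independent representation, so Theorem~\ref{thm:AlishahiM} yields a $K_{\lfloor t/2\rfloor,\lceil t/2\rceil}$ subgraph whose two sides carry linearly independent vectors. Taking any vertex $v$ on the left side, its $\lceil t/2\rceil$ right-side neighbors already contribute $\lceil t/2\rceil$ independent vectors, and $u_v$ itself is independent of these (being non-self-orthogonal yet orthogonal to all of them), so the closed neighborhood of $v$ spans dimension at least $\lceil t/2\rceil+1$. This is a two-line argument once Theorem~\ref{thm:AlishahiM} is quoted.

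You instead factor through minrank: you prove the auxiliary inequality $\minrank_{\Fset'}(\overline{G})\le \od_l(G,\Fset)$ over an algebraically closed extension $\Fset'$ via a genericity argument on the Grassmannian, and then invoke the minrank half of~\eqref{eq:AlishahiM}. Your route is longer, but it yields more: the auxiliary inequality is interesting in its own right and, in effect, sharpens Theorem~\ref{thm:minrk_localOD_gen} of the paper by removing the $\lceil\log_q n\rceil$ additive loss whenever one is willing to work over an infinite field. So the paper's proof is the cleaner derivation of the stated theorem, while yours produces a byproduct that fits naturally into Section~\ref{sec:index_coding}.

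One small point to tighten: your claim that the bad locus for a fixed $v$ is \emph{proper} in the Grassmannian does not follow merely from the dimension estimate $\dim(W^\perp+U_v^-)\le d-1$; a fixed vector could in principle lie in every member of a family of hyperplanes. The easy fix is to exhibit one good $W$ explicitly: any $\ell$-dimensional subspace $W$ containing $u_v$ works, since then $u_v\in W\cap (U_v^-)^\perp$ and $\langle u_v,u_v\rangle\neq 0$. With that, the bad locus for each $v$ is a proper closed subvariety, and over the infinite field $\Fset'$ a common good $W$ exists.
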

\noindent
By~\eqref{eq:od_l_vs_chi_l}, Theorem~\ref{thm:LocalODtopo} strengthens the lower bound of~\cite{SimonyiT06,SimonyiTZ13} on the local chromatic number, as given in Theorem~\ref{thm:SimonyiT_t/2}.
Recall that certain families of topologically $t$-chromatic graphs $G$ are known to satisfy $\chi(G)=t$ and $\chi_l(G) = \lceil t/2 \rceil+1$ (namely, certain Schrijver graphs and generalized Mycielski graphs~\cite{SimonyiT06}). It thus follows from~\eqref{eq:od_l_vs_chi_l} that the lower bound given in Theorem~\ref{thm:LocalODtopo} is tight on these graphs.
Note that, by~\eqref{eq:AlishahiM}, these graphs also satisfy $\od(G,\Fset) = t$ for every field $\Fset$, showing that the inequality~\eqref{eq:od_l_vs_od} can be strict.
Moreover, by considering a disjoint union of such a graph with a graph from~\cite{ErdosLocal} that has local chromatic number $3$ and large orthogonality dimension over $\Fset$, one can obtain graphs $G$ satisfying $\od_l(G,\Fset) < \min (\od(G,\Fset), \chi_l(G))$.

We next consider the local orthogonality dimension of complements of line graphs.
The following theorem, proved in Section~\ref{sec:line}, shows that for this family of graphs, the local orthogonality dimension over the reals coincides with the chromatic number.
\begin{theorem}\label{thm:xi_l_K(F)}
If $G$ is the complement of a line graph, then $\od_l(G,\R) = \chi(G)$.
\end{theorem}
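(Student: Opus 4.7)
The upper bound $\od_l(G, \R) \leq \chi(G)$ follows immediately from~\eqref{eq:od_l_vs_chi_l} combined with Theorem~\ref{thm:DMMLine21}: $\od_l(G, \R) \leq \chi_l(G) = \chi(G)$. For the lower bound $\od_l(G, \R) \geq \chi(G)$, write $G = \overline{L(H)}$ for some graph $H$ and consider an orthogonal representation $(u_e)_{e \in E(H)}$ of $G$ over $\R$ with locality $\ell$. Since independent sets of $\overline{L(H)}$ are precisely cliques of $L(H)$, namely stars and triangles of $H$, proper colorings of $G$ correspond bijectively to partitions of $E(H)$ into stars and triangles. It therefore suffices to exhibit such a partition of size at most $\ell$.

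The key linear-algebraic input is the following. For every edge $e = \{a,b\} \in E(H)$, the vector $u_e$ is nonzero and orthogonal to each $u_f$ with $f \cap \{a,b\} = \emptyset$. Since these vectors together with $u_e$ comprise the closed neighborhood $N_G[e]$ and thus span a subspace of dimension at most $\ell$ by locality, we obtain
\[
\dim \linspan\{u_f : f \in E(H-a-b)\} \leq \ell - 1.
\]
This yields an orthogonal representation of $\overline{L(H-a-b)}$ over $\R$ of global dimension at most $\ell - 1$, in particular $\od(\overline{L(H-a-b)}, \R) \leq \ell - 1$.

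The strategy is to proceed by induction on $|V(H)|$, extending the combinatorial approach of Daneshpajouh, Meunier, and Mizrahi~\cite{DMMLine21} from proper colorings to orthogonal representations. For a well-chosen edge $e = \{a,b\}$, the inductive hypothesis applied to the smaller graph $\overline{L(H-a-b)}$, together with the key observation, yields a partition of $E(H-a-b)$ into at most $\ell-1$ stars and triangles. Extending this partition to $E(H)$ requires covering the edges of $H$ incident to $a$ or $b$ by at most one additional star or triangle.

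The principal obstacle is that the naïve extension requires two additional stars (one centered at $a$ and one at $b$), yielding $\ell+1$ parts rather than $\ell$. To save one part, I plan a case analysis on the structure of $H$. If $H$ contains a leaf vertex $a$, picking $\{a,b\}$ with $a$ a leaf means the only edge incident to $a$ is $\{a,b\}$, so a single star at $b$ absorbing $\{a,b\}$ suffices as the extra part. If $H$ contains a triangle through $\{a,b\}$, the three edges of that triangle can serve as a single triangle-part, saving a star. In the remaining leaf-free, triangle-free regime, the clique bound $\od_l(G,\R) \geq \omega(G) = \nu(H)$, combined with the identity $\chi(\overline{L(H)}) = \tau(H)$ for triangle-free $H$ and König's theorem in the bipartite substructure, should close the gap directly. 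I expect the hardest subcase to be triangle-containing non-bipartite $H$ in which no triangle passes through a convenient edge; this may require a more refined choice of the removed pair $\{a,b\}$ or a finer subspace-dimension argument exploiting the locality constraint at several edges simultaneously, mirroring the discrete absorbing technique of~\cite{DMMLine21} in the linear-algebraic setting.
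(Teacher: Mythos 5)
The main gap is in the inductive extension step. Your plan is to remove both endpoints $a,b$ of a well-chosen edge, use the locality constraint at $\{a,b\}$ to conclude $\od(\overline{L(H-a-b)},\R)\leq \ell-1$, apply induction to obtain a partition of $E(H-a-b)$ into at most $\ell-1$ stars and triangles, and then extend to $E(H)$ with one additional part. But the needed inequality $\chi(\overline{L(H)}) \leq \chi(\overline{L(H-a-b)}) + 1$ is false in general. Take $H = C_5$: removing any edge $\{a,b\}$ leaves $H-a-b = P_3$, whose line-graph-complement has chromatic number $1$, while $\chi(\overline{L(C_5)}) = \chi(C_5) = 3$. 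So the extension costs two parts, not one. Your three-way case analysis does not rescue this: $C_5$ is leaf-free and triangle-free, the clique bound gives $\od_l \geq \omega(\overline{L(C_5)}) = \nu(C_5) = 2$, which is short of the needed $3$, and K\"{o}nig's theorem is unavailable since $C_5$ is not bipartite. The triangle case has the same difficulty whenever $a$ or $b$ has neighbors outside the chosen triangle, which you do not address.

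The paper's proof takes a genuinely different inductive step to avoid exactly this problem: it removes a \emph{single} element $i$ of the ground set (a single vertex of $H$), for which the extension inequality $\chi(\overline{L(H)}) \leq \chi(\overline{L(H-i)})+1$ does hold, since the edges incident to $i$ form one star (an intersecting family) and thus cost exactly one additional color. The price is that the locality drop is no longer automatic: the closed neighborhood of a vertex of $\overline{L(H-i)}$ has more vectors than just those disjoint from one fixed edge. The paper handles this with a delicate three-case analysis based on proportionality relations among vectors of edges sharing a common endpoint. In the first case, all vectors are pairwise proportional-or-orthogonal, giving a triangle coloring directly with all colors seen in every closed neighborhood; in the second and third cases the representation on $\calF' = \{A : i\notin A\}$ is shown (sometimes after replacing certain vectors by projections onto the orthogonal complement of $u_{\{i,j\}}$) to have locality at most $\ell-1$, partly by a contradiction against a minimum-dimensional orthogonal representation of $G$. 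Your locality-drop observation is the easy half; the hard half, which your proposal leaves open, is making the combinatorics of the extension work, and that is precisely where the paper's one-vertex induction and its case analysis earn their keep.
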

\noindent
Theorem~\ref{thm:xi_l_K(F)} implies that for every integer $n \geq 4$, the Schrijver graph $S(n,2)$, which is topologically $t$-chromatic for $t=n-2$, satisfies $\od_l (S(n,2),\R) = n-2$. This shows that there are topologically $t$-chromatic graphs $G$ with $\chi(G)=t$ for which the lower bound given in Theorem~\ref{thm:LocalODtopo} has a multiplicative gap of roughly $2$ from the truth.
Additionally, since every graph $G$ satisfies $\od_l(G,\R) \leq \chi_l(G) \leq \chi(G)$, the theorem implies that for complements of lines graphs, the chromatic number is equal to its local variant, and thus strengthens the result of~\cite{DMMLine21} given in Theorem~\ref{thm:DMMLine21}.

In the current work, we contribute another family of graphs for which the chromatic number is equal to the local chromatic number.
This is done in the following theorem, proved in Section~\ref{sec:K(n,3)}, that determines the local chromatic number of the Kneser graphs $K(n,3)$.
\begin{theorem}\label{thm:K(n,3)}
For every $n \geq 6$,~ $\chi_l(K(n,3)) = n-4$.
\end{theorem}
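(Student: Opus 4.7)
The upper bound $\chi_l(K(n,3)) \leq n-4$ is immediate from the Lov\'asz--Kneser theorem $\chi(K(n,3)) = n-4$ together with the general inequality $\chi_l(G) \leq \chi(G)$. My plan is therefore to establish the matching lower bound $\chi_l(K(n,3)) \geq n-4$, which is strictly stronger than the $\lceil (n-4)/2 \rceil + 1$ bound available from Theorem~\ref{thm:SimonyiT_t/2}. I would proceed by induction on $n$, with base cases $n=6$ and $n=7$ handled directly: $K(6,3)$ is a perfect matching, giving $\chi_l = 2 = n-4$; and $K(7,3)$, having chromatic number $3$, is non-bipartite and therefore satisfies $\chi_l \geq 3 = n-4$ (using the easy fact that a graph with at least one edge has $\chi_l = 2$ if and only if it is bipartite).

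For the inductive step, fix $n \geq 8$, assume $\chi_l(K(n-1,3)) = n-5$, and let $c$ be a proper coloring of $K(n,3)$ with locality $\ell$. For each $i \in [n]$, the subgraph $G_i$ induced on the 3-subsets of $[n]$ avoiding $i$ is isomorphic to $K(n-1,3)$, and the restriction of $c$ to $G_i$ is proper of locality at most $\ell$; the induction hypothesis then forces $\ell \geq n-5$. Assuming $\ell = n-5$ for the sake of contradiction, for every $i$ there exists a $3$-set $v_i$ not containing $i$ whose closed neighborhood in $G_i$ meets exactly $n-5$ color classes, forming a set $C_i$. In $K(n,3)$ the vertex $v_i$ has $\binom{n-4}{2}$ additional neighbors, namely the triples $\{i,p,q\}$ with $p,q \in [n] \setminus (v_i \cup \{i\})$, and the locality assumption confines their colors to $C_i$. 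I would then invoke the Ky~Fan type refinement underlying Theorem~\ref{thm:SimonyiT_t/2} inside each $G_i$ to realize all of $C_i$ as a colorful alternating complete bipartite subgraph $K_{\lceil (n-5)/2 \rceil,\lfloor (n-5)/2 \rfloor}$ inside $N_{G_i}[v_i]$, and use the abundance of triples through $i$ to pin down a color outside $C_i$ that must appear in the $K(n,3)$-closed neighborhood of $v_i$, giving the desired contradiction.

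The main obstacle is precisely this last step: translating the color-confinement conditions on the triples through $i$ into inconsistent structural constraints on the color classes. Specifically, the ``locally missing'' color class at $v_i$ is forced to consist entirely of $3$-sets meeting the $4$-element set $v_i \cup \{i\}$, and this must hold simultaneously for every $i \in [n]$. I expect the contradiction to come from combining these constraints with the intersecting-family structure of the color classes, via a case analysis based on the Hilton--Milner classification of intersecting families of $3$-subsets of $[n]$, distinguishing stars from non-star intersecting families. A backup plan, should this direct approach prove too delicate, is to bypass the induction and work directly with the colorful $K_{\lceil (n-4)/2 \rceil,\lfloor (n-4)/2 \rfloor}$ that Simonyi and Tardos provide inside $K(n,3)$, and to argue, in the spirit of the proof of Theorem~\ref{thm:DMMLine21} but now in the Kneser rather than line-graph setting, that its color classes are so constrained that one of its vertices already meets all $n-4$ colors in its $K(n,3)$-closed neighborhood.
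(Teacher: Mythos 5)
Your upper bound and base cases are correct, and the inductive framework is set up soundly: for a proper coloring $c$ of $K(n,3)$ with locality $\ell$, restricting to the subgraph $G_i \cong K(n-1,3)$ on triples avoiding $i$ and invoking $\chi_l(K(n-1,3))=n-5$ does give $\ell \geq n-5$, so it suffices to rule out $\ell = n-5$. The trouble is that the step ruling out $\ell = n-5$ is entirely absent. You correctly derive that the colors of the $\binom{n-4}{2}$ triples through $i$ disjoint from $v_i$ are confined to the set $C_i$ of $n-5$ colors already seen in $N_{G_i}[v_i]$, and that consequently every color class outside $C_i \cup \{c(v_i)\}$ must consist of triples meeting $v_i$. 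But from there the argument becomes a declaration of intent: the Ky~Fan/colorful bipartite refinement and the Hilton--Milner classification are named but never applied, no actual inconsistency is exhibited, and you flag this yourself as ``the main obstacle.'' The backup plan likewise appeals to the spirit of the line-graph-complement argument without explaining how it transfers to the Kneser setting. As written, this is an outline with the crux missing, not a proof.

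For comparison, the paper's proof is non-inductive and global rather than local. It fixes a proper coloring of locality $\ell \leq n-5$ whose \emph{number of colors} $m$ is minimized, and counts the number $M$ of pairs $(A,j)$ where a vertex $A$ sees color $j$ in its open neighborhood. Locality gives $M \leq (\ell-1)\binom{n}{3} \leq (n-6)\binom{n}{3}$. For the lower bound it classifies color classes (intersecting families of triples) into four types according to intersection structure, bounds from above the number $\overline{M}_I$ of vertices not seeing a given color class $I$ of each type, and crucially shows that removing color classes of the ``large $\overline{M}_I$'' types leaves a subgraph still containing a copy of $S(n,3)$, hence of chromatic number $n-4$; a lemma (any coloring with locality strictly below the chromatic number uses strictly more colors than the chromatic number) then forces $m$ to be large. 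Combining these across four cases yields $M > (n-6)\binom{n}{3}$, a contradiction. The key technical content in your proposal — pinning down, simultaneously for all $i$, how the ``missing'' color classes interact with all the sets $v_i \cup \{i\}$ — is exactly the hard part, and without something like the paper's quantitative counting and color-class case analysis, I do not see how the local constraints alone close the argument.
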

\noindent
The proof of Theorem~\ref{thm:K(n,3)} borrows the approach applied in~\cite{SimonyiT06} to prove that for every $n \geq 4$, $\chi_l(S(n,2)) = n-2$.
It was shown there that for every proper coloring of $S(n,2)$, the vertices of every color class are adjacent to `many' vertices of the graph.
This was used to derive that there exists a vertex whose closed neighborhood includes at least $n-2$ colors.
However, a straightforward attempt to apply this strategy to the graph $K(n,3)$ does not succeed.
It turns out that a proper coloring of this graph may include color classes whose vertices are adjacent to relatively `few' vertices, hence one cannot deduce, as in~\cite{SimonyiT06}, that some vertex has at least $n-4$ colors in its closed neighborhood.
To overcome this difficulty, we show that if a proper coloring of $K(n,3)$ involves such color classes, then the total number of colors must be quite `large', and this is used to obtain a vertex with at least $n-4$ colors in its closed neighborhood.
The proof involves a delicate analysis of the number of vertices adjacent to the vertices of color classes of several types, and uses the fact that the chromatic number of $K(n,3)$ is equal to that of its subgraph $S(n,3)$.

We next consider the computational aspects of the local orthogonality dimension.
It should be mentioned that a result of~\cite{DMMLine21} implies that, unless $\P=\NP$, there is no algorithm with polynomial running-time that determines the chromatic number of the complement of a given line graph (see~\cite[Section~4.3]{DMMLine21}). By Theorem~\ref{thm:xi_l_K(F)}, it follows that it is unlikely that there exists an algorithm with polynomial running-time that determines the local orthogonality dimension over $\R$ of such graphs. For general fields, we prove in Section~\ref{sec:hardness} the following hardness result.
\begin{theorem}\label{thm:hardness}
For every integer $k \geq 3$ and a field $\Fset$, the problem of deciding whether an input graph $G$ satisfies $\od_l(G,\Fset) \leq k$ is $\NP$-hard.
\end{theorem}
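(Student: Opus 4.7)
The plan is to reduce from Peeters's $\NP$-hardness theorem~\cite{Peeters96}, which states that for every $k' \geq 3$ and field $\Fset$, deciding whether an input graph $H$ satisfies $\od(H,\Fset) \leq k'$ is $\NP$-hard. Given $H$, I construct $G$ by adjoining a new vertex $v_0$ to $V(H)$ and making it adjacent to every vertex of $V(H)$, so that $v_0$ is universal in $G$ and its closed neighborhood $N[v_0]$ coincides with $V(G)$. The heart of the reduction is the equality
\[\od_l(G,\Fset) = \od(G,\Fset) = \od(H,\Fset) + 1,\]
which implies that $\od_l(G,\Fset) \leq k$ iff $\od(H,\Fset) \leq k-1$, yielding $\NP$-hardness of the former for every $k \geq 4$.

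For the upper bound $\od_l(G,\Fset) \leq \od(H,\Fset) + 1$, I take an optimal orthogonal representation $(u'_v)_{v \in V(H)}$ of $H$ in $\Fset^{t}$ with $t = \od(H,\Fset)$, extend each $u'_v$ by a zero coordinate to form a vector in $\Fset^{t+1}$, and set $u_{v_0}$ to be the $(t+1)$-th standard basis vector of $\Fset^{t+1}$. A direct check shows that the result is a valid orthogonal representation of $G$, and every closed neighborhood is spanned by vectors of $\Fset^{t+1}$, so the locality at every vertex is at most $t+1$.

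For the lower bound, since $v_0$ is universal, the locality at $v_0$ in any orthogonal representation $(u_v)_{v \in V(G)}$ of $G$ equals $\dim(\linspan\{u_v : v \in V(G)\})$. The sub-family $(u_v)_{v \in V(H)}$ is itself an orthogonal representation of $H$, so its span $U$ has dimension at least $\od(H,\Fset)$. Moreover, $u_{v_0}$ is orthogonal to every $u_v$ for $v \in V(H)$ while satisfying $\langle u_{v_0}, u_{v_0}\rangle \neq 0$, and hence cannot lie in $U$: if it did, it would be orthogonal to itself, contradicting non-isotropy. Therefore the full span has dimension at least $\dim(U) + 1 \geq \od(H,\Fset) + 1$, establishing the lower bound.

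The main obstacle is the base case $k = 3$, which the universal-vertex reduction does not cover: it would require $\NP$-hardness of $\od(H,\Fset) \leq 2$, but the latter is polynomial-time solvable, being equivalent over every field to a bipartiteness check on $H$. To handle $k = 3$, I would reduce from Peeters's $\NP$-hardness of $\od(H,\Fset) \leq 3$ via a more delicate construction that produces graphs $G$ for which $\od_l(G,\Fset) = \od(H,\Fset)$ on the nose, without the $+1$ inflation introduced by adding an external universal vertex. A natural strategy is to tailor Peeters's reduction so that some vertex already in its output has a closed neighborhood whose induced subgraph captures the full orthogonality dimension of the constructed graph, making an auxiliary universal vertex unnecessary; arguing that the output of such a tailored reduction satisfies $\od_l = \od$ is the technically most delicate part of the proof.
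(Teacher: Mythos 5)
There is a genuine gap, and it sits exactly where the paper's main technical work lies: the case $k=3$. Your universal-vertex reduction can at best transfer Peeters's hardness of $\od(H,\Fset)\leq k-1$ with $k-1\geq 3$, i.e.\ it only reaches $k\geq 4$, and as you yourself note, $\od(H,\Fset)\leq 2$ is polynomially decidable, so the base case cannot be obtained this way. Your plan for $k=3$ (``tailor Peeters's reduction so that some vertex's closed neighborhood captures the full orthogonality dimension'') is only a hope, not an argument; nothing in the proposal shows such a construction exists or that its output satisfies $\od_l=\od$. The paper's proof spends essentially all of its effort on this case: it reduces directly from $\SAT$, combining Peeters's construction with Osang's local-coloring gadgets, adds an extra gadget $H_{i,j}$ joining each of the vertices $w,t,f$ to every other vertex, and proves a propagation lemma showing that any orthogonal representation with locality $3$ has all its vectors inside the $3$-dimensional subspace spanned by $u_w,u_t,u_f$; the gadget then forces every vector to be proportional to exactly one of these, which extracts a proper $3$-coloring and hence a satisfying assignment. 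Without something of this kind, the theorem is not established for $k=3$, and consequently your $k\geq 4$ argument (which the paper instead obtains by joining a clique of $k-3$ universal vertices to the $k=3$ instance, the same lifting idea as yours) has no base to stand on within your own scheme either, since you chose to base $k\geq 4$ on Peeters rather than on a solved $k=3$ case.

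A second, more local problem concerns your lower bound $\od_l(G,\Fset)\geq \od(H,\Fset)+1$ over an arbitrary field. You argue that the restriction $(u_v)_{v\in V(H)}$ is an orthogonal representation of $H$, ``so its span $U$ has dimension at least $\od(H,\Fset)$.'' That inference needs justification: the restricted representation lives in $\Fset^t$ and certifies only $\od(H,\Fset)\leq t$; to conclude $\od(H,\Fset)\leq \dim(U)$ you would have to compress it into $\Fset^{\dim(U)}$ with the \emph{standard} bilinear form, but the form induced on a subspace of $\Fset^t$ need not be equivalent to the standard form of that dimension (over finite fields, for instance, the discriminant is an obstruction, and one may also have to deal with a nontrivial radical). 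This is unproblematic over $\R$ or with the Hermitian form over $\C$, but the theorem quantifies over every field, so this step is a gap as written. Note that the paper's lifting argument for $k\geq 4$ carefully avoids exactly this issue: after cutting down to the subspace $W$ orthogonal to the clique vectors, it concludes only the locality statement $\od_l(G',\Fset)\leq \dim(W)\leq 3$, which is insensitive to the ambient dimension and requires no re-embedding into a standard space.
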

\noindent
The proof of Theorem~\ref{thm:hardness} combines ideas that were applied by Peeters~\cite{Peeters96} and by Osang~\cite{Osang} to prove hardness results for, respectively, the orthogonality dimension and the local chromatic number.
We note that for $k \in \{1,2\}$ and for every field $\Fset$, a graph $G$ satisfies $\od_l(G,\Fset) \leq k$ if and only if $\chi(G) \leq k$ (see Lemma~\ref{lemma:od<=2}). This implies that for such values of $k$, it is possible to efficiently decide whether a graph $G$ satisfies $\od_l(G,\Fset) \leq k$ (see Proposition~\ref{prop:k=1,2_easy}).

Finally, in Section~\ref{sec:index_coding}, we relate the local orthogonality dimension to the minrank parameter, motivated by the linear index coding problem.
We first present, in a generalized form, the connection given in~\cite{shanmugamKDALM2013} between the local chromatic number and the minrank parameter, and relate it to a derandomization problem studied by Schulman~\cite{Schulman92} (see also~\cite{HavivL19}).
We observe there that topologically $t$-chromatic graphs $G$ with $\chi_l(G) = \lceil t/2 \rceil+1$, such as the ones given in~\cite{SimonyiT06}, satisfy ${\minrank}_{\Fset}(\overline{G}) = \lceil t/2 \rceil+1$ for every sufficiently large field $\Fset$ (see Proposition~\ref{prop:local_minrank_n}). This shows that for certain graphs and fields, the topological lower bound on the minrank parameter, given in~\eqref{eq:AlishahiM} and proved in~\cite{AlishahiM21}, is tight.
Then, for the local orthogonality dimension, we prove the following.
\begin{theorem}\label{thm:binaryOD}
For every graph $G$ on $n$ vertices, ${\minrank}_{\Fset_2}(G) \leq \od_l(\overline{G},\Fset_2)+\lceil \log_2 n \rceil$.
\end{theorem}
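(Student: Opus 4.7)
The plan is to take any orthogonal representation of $\overline{G}$ over $\Fset_2$ with locality $\ell := \od_l(\overline{G}, \Fset_2)$ and apply a random $\Fset_2$-linear projection to dimension $d := \ell + \lceil \log_2 n \rceil$, obtaining an asymmetric low-rank factorization of a matrix that fits $G$. This is an orthogonality analogue of the random-combinations strategy that Shanmugam, Dimakis, and Langberg use in the proof of Theorem~\ref{thm:minrk_SDL}.

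Fix an orthogonal representation $(u_v)_{v \in V} \subseteq \Fset_2^T$ of $\overline{G}$ with locality $\ell$, and set $V_v := \linspan\{u_w : w \in N_{\overline{G}}(v)\}$. A preliminary observation, obtained by pairing any hypothetical linear combination with $u_v$ and using $\langle u_v, u_v \rangle = 1$ together with $\langle u_v, u_w \rangle = 0$ for $w \in N_{\overline{G}}(v)$, is that $u_v \notin V_v$; hence $\dim V_v \leq \ell - 1$. I would then draw $R \in \Fset_2^{d \times T}$ uniformly at random, set $\phi_v := Ru_v \in \Fset_2^d$, and show that with positive probability $\phi_v \notin \linspan\{\phi_w : w \in N_{\overline{G}}(v)\}$ for every $v$. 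The key is a conditional argument: writing $u_v = v' + z$ with $v' \in V_v$ and $z \neq 0$ in a fixed complement of $V_v$, the vector $Rz$ is uniform in $\Fset_2^d$ and independent of $R|_{V_v}$, so conditional on $R|_{V_v}$ the vector $\phi_v = Rv' + Rz$ is uniform in $\Fset_2^d$ while $R(V_v)$ is a determined subspace of dimension at most $\ell - 1$. This yields a failure probability at most $2^{\ell-1-d}$ per vertex, and a union bound gives total failure probability at most $n \cdot 2^{\ell-1-d} \leq 1/2$ by the choice of $d$.

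Given such an $R$, for each $v$ I would pick any $\psi_v \in \Fset_2^d$ solving the linear system $\langle \phi_v, \psi_v \rangle = 1$ and $\langle \phi_w, \psi_v \rangle = 0$ for all $w \in N_{\overline{G}}(v)$; this is consistent precisely because $\phi_v \notin \linspan\{\phi_w : w \in N_{\overline{G}}(v)\}$, so no linear combination of the system's row vectors vanishes while producing a nonzero entry on the right-hand side. Setting $M_{v,w} := \langle \phi_v, \psi_w \rangle$ produces an $n \times n$ matrix over $\Fset_2$ with $M_{v,v} = 1$ for every $v$ and $M_{v,w} = 0$ whenever $v \neq w$ are adjacent in $\overline{G}$, so $M$ fits $G$ in the sense required by $\minrank_{\Fset_2}(G)$. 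Since $M$ factors through $\Fset_2^d$ as the product of an $n \times d$ matrix (with rows $\phi_v^T$) and a $d \times n$ matrix (with columns $\psi_w$), its rank is at most $d = \ell + \lceil \log_2 n \rceil$, finishing the proof.

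The main technical step is ensuring that a single random compression preserves, simultaneously for every vertex, the essential linear independence $u_v \notin V_v$. The locality hypothesis enters precisely through the bound $\dim V_v \leq \ell - 1$, and the $\lceil \log_2 n \rceil$ overhead is the standard union-bound cost over the $n$ vertices.
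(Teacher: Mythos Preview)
Your proposal is correct and follows essentially the same approach as the paper: both take an orthogonal representation with locality $\ell$, apply a uniformly random linear map into $\Fset_2^{\ell+\lceil\log_2 n\rceil}$, and use a union bound to show that with positive probability the image is an independent representation, yielding the minrank bound. The only cosmetic differences are that the paper packages the independence-of-random-images step as a separate lemma (linearly independent vectors map to jointly uniform vectors under a random matrix) rather than your complement-decomposition phrasing, and it invokes the known characterization of $\minrank_\Fset(\overline{G})$ as the minimum dimension of an independent representation rather than explicitly building the dual vectors $\psi_v$ as you do.
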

\noindent
Theorem~\ref{thm:binaryOD} is proved by a probabilistic argument. By~\eqref{eq:od_l_vs_chi_l}, it strengthens Theorem~\ref{thm:minrk_SDL} proved in~\cite{shanmugamKDALM2013}.
For an extension of Theorem~\ref{thm:binaryOD} to general finite fields, see Theorem~\ref{thm:minrk_localOD_gen}.

\section{A Topological Lower Bound on the Local Orthogonality Dimension}\label{sec:top_bound}

In this section we show, as a simple application of a result of~\cite{AlishahiM21}, that topological methods can be used to obtain lower bounds on the local orthogonality dimension over every field. We start with the following remark on the notion of topologically $t$-chromatic graphs.

\begin{remark}\label{remark:topologically}
In this work, we use a variant of the notion of topologically $t$-chromatic graphs, suggested in~\cite{SimonyiT06}, to describe in a general form the bounds that topological methods provide on graph parameters. The precise definition of this notion is not needed throughout this work, and we refer the reader to Matou{\v{s}}ek's book~\cite{MatousekBook07} for an in-depth introduction to the topic.
For concreteness, we mention that we refer to a graph $G$ as {\em topologically $t$-chromatic} if $t \leq \mathrm{Xind}(\mathrm{Hom}(K_2,G)) +2$, where $\mathrm{Hom}(K_2,G)$ stands for the homomorphism complex of $K_2$ in $G$ and $\mathrm{Xind}$ for the cross-index (see~\cite[Sections~2 and~3]{SimonyiTZ13} for the definitions).
It should be mentioned, though, that this definition is different from the one used in~\cite{SimonyiT06}. There, a graph $G$ was said to be topologically $t$-chromatic if $t \leq \mathrm{coind}(B_0(G)) +1$, where $B_0(G)$ stands for the box complex of $G$ and $\mathrm{coind}$ for the $\Z_2$-coindex. With respect to this definition, it was proved there that a topologically $t$-chromatic graph $G$ with at least one edge satisfies $\chi_l(G) \geq \lceil t/2 \rceil +1$, and in the later work~\cite{SimonyiTZ13}, the same bound was proved under the assumption $t \leq \mathrm{Xind}(\mathrm{Hom}(K_2,G)) +2$. The result of~\cite{SimonyiTZ13} extends the one of~\cite{SimonyiT06}, because it is known that $\mathrm{coind}(B_0(G)) \leq \mathrm{Xind}(\mathrm{Hom}(K_2,G)) +1$ holds for every graph $G$ (see~\cite[Section~3]{SimonyiTZ13}). Further, the results of~\cite{AlishahiM21}, that relate the topological quantities to orthogonality dimension and minrank, were also shown under the weaker assumption $t \leq \mathrm{Xind}(\mathrm{Hom}(K_2,G)) +2$ (making the results stronger).
This allows us to use the above weaker condition as the definition of topologically $t$-chromatic graphs, where all the stated results on such graphs in particular hold under the stronger condition of~\cite{SimonyiT06}.
\end{remark}

We need the following definition of independent representations of graphs.
\begin{definition}\label{def:ind_rep}
A {\em $t$-dimensional independent representation} of a graph $G=(V,E)$ over a field $\Fset$ is an assignment of a vector $u_v \in \mathbb{F}^t$ to every vertex $v \in V$, such that for every $v \in V$, $u_v$ does not belong to the subspace $\linspan (\{u_{v'} \mid v' \in N(v)\})$ spanned by the vectors of its neighbors.
\end{definition}

\begin{remark}\label{remark:ortho_vs_ind}
Note that an orthogonal representation of a graph $G=(V,E)$ over a field $\Fset$ is also an independent representation of $G$ over $\Fset$.
Indeed, for an orthogonal representation $(u_v)_{v \in V}$ of $G$ over $\Fset$, it is impossible that some vector $u_v$ is a linear combination of the vectors $u_{v'}$ with $v' \in N(v)$, because the inner product of such a linear combination with $u_v$ is zero whereas the inner product of $u_v$ with itself is not.
\end{remark}

We will use the following special case of a result proved in~\cite{AlishahiM21}.
\begin{theorem}[\cite{AlishahiM21}]\label{thm:AlishahiM}
Let $G$ be a topologically $t$-chromatic graph with at least one edge, and let $\Fset$ be a field.
Then, for every independent representation of $G$ over $\Fset$, there exists a complete bipartite subgraph $K_{\lfloor t/2 \rfloor, \lceil t/2 \rceil}$ of $G$ such that the vectors assigned to each of its sides are linearly independent over $\Fset$.
\end{theorem}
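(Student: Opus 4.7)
The plan is to derive the theorem by combining the topological hypothesis $\mathrm{Xind}(\mathrm{Hom}(K_2,G)) \geq t-2$ with a combinatorial--algebraic labeling argument on the simplicial $\Z_2$-complex $\mathrm{Hom}(K_2,G)$. Recall that a simplex of $\mathrm{Hom}(K_2,G)$ is a pair $(A,B)$ of disjoint nonempty subsets of $V(G)$ with every $a\in A$ adjacent in $G$ to every $b\in B$, i.e., a complete bipartite subgraph of $G$, and the $\Z_2$-action swaps $A$ and $B$. Fix any independent representation $(u_v)_{v\in V}$ of $G$ over $\Fset$, and assume for contradiction that no complete bipartite subgraph $K_{\lfloor t/2\rfloor,\lceil t/2\rceil}$ of $G$ has linearly independent vectors on both sides.

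The goal is then to construct, from these data, a $\Z_2$-equivariant simplicial map from a suitable subdivision of $\mathrm{Hom}(K_2,G)$ to a free $\Z_2$-complex whose cross-index is at most $t-3$. Existence of such a map forces $\mathrm{Xind}(\mathrm{Hom}(K_2,G))\leq t-3$, contradicting the topological hypothesis and closing the argument. Concretely, I plan to label each vertex $(A,B)$ of the first barycentric subdivision of $\mathrm{Hom}(K_2,G)$ by a signed integer $\pm r\in\{\pm 1,\ldots,\pm(t-1)\}$. The sign is chosen $\Z_2$-equivariantly from the ordered pair $(A,B)$ (swapping $A$ and $B$ flips the sign), and the magnitude $r$ is a rank quantity read off from the vectors, for instance $r=\dim\linspan\{u_v:v\in A\}$ when $A$ is declared the ``positive'' side, truncated via a tie-breaking rule among the two ranks $\dim\linspan\{u_v:v\in A\}$ and $\dim\linspan\{u_v:v\in B\}$. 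The independence property of $(u_v)$ is used to show that these ranks grow strictly as vertices are added along chains of faces, and the assumed absence of the target $K_{\lfloor t/2\rfloor,\lceil t/2\rceil}$ caps the attainable magnitudes, preventing any simplex from carrying a full alternating chain $+1,-2,+3,\ldots,\pm t$ of length $t$. A Ky Fan/Tucker-type lemma for $\Z_2$-complexes then converts this combinatorial restriction into the required equivariant map into a target of cross-index $\leq t-3$.

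The hard part is arranging the labeling so that all three requirements are met at once: $\Z_2$-equivariance (sign flip under swap), simplicial compatibility with the face relation of $\mathrm{Hom}(K_2,G)$, and the global ``no length-$t$ alternating chain'' condition. The simplicial compatibility step is where the finer features of an independent representation enter: when we pass from a simplex $(A,B)$ to a face $(A',B')\subseteq(A,B)$, the ranks $\dim\linspan\{u_v:v\in A'\}$ and $\dim\linspan\{u_v:v\in B'\}$ can drop, and one must verify that the labeling respects these transitions without creating forbidden alternations. This is the same subtlety that appears in the Simonyi--Tardos--Zsb\'{a}n proof for the local chromatic number, upgraded from the zero-dimensional setting of colors (where two vectors are either equal or distinct) to arbitrary rank computations in $\Fset^d$; the extra flexibility provided by the independence condition, rather than by full orthogonality, is precisely what makes the argument work over every field $\Fset$.

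Finally, once the contradiction is derived, the proof concludes: some simplex $(A,B)$ of $\mathrm{Hom}(K_2,G)$ must in fact realize the forbidden configuration, i.e., a complete bipartite subgraph of $G$ with at least $\lfloor t/2\rfloor$ vertices on one side and $\lceil t/2\rceil$ on the other whose associated vectors are linearly independent on each side. Passing to a sub-bipartite subgraph of the prescribed sizes yields the desired $K_{\lfloor t/2\rfloor,\lceil t/2\rceil}$ and completes the proof.
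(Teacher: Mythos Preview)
The paper does not prove this theorem at all: it is quoted verbatim as a special case of a result of Alishahi and Meunier~\cite{AlishahiM21} and used as a black box in the proof of Theorem~\ref{thm:LocalODtopo}. There is therefore no ``paper's own proof'' to compare against.

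Your outline is in the right spirit for how the original Alishahi--Meunier argument goes: one works with the $\Z_2$-complex $\mathrm{Hom}(K_2,G)$, defines an equivariant labeling of its (subdivided) simplices via rank data coming from the independent representation, and invokes a Ky~Fan/Tucker-type lemma to force the existence of a long alternating simplex, which translates into the desired complete bipartite subgraph with independent vectors on both sides. However, what you have written is a plan, not a proof. You explicitly flag the crucial step---making the rank-based labeling simultaneously $\Z_2$-equivariant, simplicial along face chains, and free of length-$t$ alternating chains---as ``the hard part'' and do not carry it out. In particular, the tie-breaking rule between $\dim\linspan\{u_v:v\in A\}$ and $\dim\linspan\{u_v:v\in B\}$ is left unspecified, and the claim that independence forces strict rank growth along chains needs a precise formulation (ranks can stay constant when a new vertex's vector lies in the span of the old ones). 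Until those details are pinned down, this remains a plausible roadmap rather than a verified argument; if you want a self-contained proof, you should consult~\cite{AlishahiM21} directly.
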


We are ready to derive Theorem~\ref{thm:LocalODtopo}.

\begin{proof}[ of Theorem~\ref{thm:LocalODtopo}]
Let $G=(V,E)$ be a topologically $t$-chromatic graph with at least one edge, and let $\Fset$ be a field. Put $\ell = \od_l(G,\Fset)$.
By definition, there exists an orthogonal representation $(u_v)_{v \in V}$ of $G$ over $\Fset$ with locality $\ell$, which forms, by Remark~\ref{remark:ortho_vs_ind}, an independent representation of $G$ over $\Fset$. By Theorem~\ref{thm:AlishahiM}, there exists a complete bipartite subgraph $K_{\lfloor t/2 \rfloor, \lceil t/2 \rceil}$ of $G$ such that the vectors assigned to each of its sides are linearly independent over $\Fset$. In particular, the vectors of the vertices of the right side of this bipartite subgraph span a linear subspace of dimension $\lceil t/2 \rceil$. Let $v$ be an arbitrary vertex from the left side of the bipartite subgraph, and observe that its vector $u_v$ cannot be represented as a linear combination of the vectors of the right side (because the inner product of $u_v$ with itself is nonzero, whereas the inner products of $u_v$ with the vectors of the right side are all zeros). It thus follows that the vectors $\{u_{v'} \mid v' \in \{v\} \cup N(v) \}$ assigned by the given orthogonal representation to the closed neighborhood of $v$ span a subspace of dimension at least $\lceil t/2 \rceil + 1$, hence its locality satisfies $\ell \geq \lceil t/2 \rceil+1$, as desired.
\end{proof}

\section{The Local Orthogonality Dimension of Complements of Line Graphs}\label{sec:line}

In this section we prove Theorem~\ref{thm:xi_l_K(F)}, which asserts that the local orthogonality dimension over $\R$ of the complement of a line graph is equal to its chromatic number.

Consider the following definition.
\begin{definition}\label{def:K(F)}
For a set system $\calF$, let $K(\calF)$ denote the graph on the vertex set $\calF$ where two distinct vertices $A,B \in \calF$ are adjacent if $A \cap B = \emptyset$.
\end{definition}
\noindent
Note that the Kneser graph $K(n,k)$ is the graph $K(\calF)$ where $\calF$ is the family of all $k$-subsets of $[n]$, and that the Schrijver graph $S(n,k)$ is the graph $K(\calF)$ where $\calF$ is the family of all $k$-subsets of $[n]$ with no two consecutive elements modulo $n$.
In fact, it is known that every graph is isomorphic to $K(\calF)$ for some set system $\calF$ (see, e.g.,~\cite{MatousekZ04}).
Observe that the complements of line graphs are precisely the graphs $K(\calF)$ where $\calF$ is a system of sets of size $2$ each.

Throughout the proof of Theorem~\ref{thm:xi_l_K(F)}, we say that two real vectors $x,y \in \R^t$ are {\em proportional} if $x = \alpha \cdot y$ for some nonzero $\alpha \in \R$.

\begin{proof}[ of Theorem~\ref{thm:xi_l_K(F)}]
The complement $G$ of a line graph can be viewed, as mentioned above, as a graph $K(\calF)$ for a family $\calF$ of $2$-subsets of some ground set $[n]$ (see Definition~\ref{def:K(F)}).
By~\eqref{eq:od_l_vs_chi_l}, it holds that $\od_l(G,\R) \leq \chi_l(G) \leq \chi(G)$.
To prove that $\od_l(G,\R) \geq \chi(G)$, we will apply an induction on the number $n$ of the elements in the ground set.
For $n=2$, the statement is trivial because $G$ has at most one vertex, hence it clearly holds that $\od_l(G,\R) = \chi(G)$.
We turn to prove the statement for $n \geq 3$, assuming that it holds for $n-1$.

Let $\calF$ be a system of $2$-subsets of $[n]$, and let $G = K(\calF)$.
Put $\ell = \od_l(G,\R)$, and let $(u_A)_{A \in \calF}$ be a $t$-dimensional orthogonal representation $(u_A)_{A \in \calF}$ of $G$ over $\R$ with locality $\ell$, where the dimension $t$ is minimized over all the orthogonal representations of $G$ over $\R$ with locality $\ell$.
For every set $C \subseteq [n]$, define
\[ W_C = \linspan \big ( \{u_B ~\mid~ B \in \calF~~\mbox{and}~~B \cap C = \emptyset\} \big ).\]
In words, $W_C$ is the subspace of $\R^t$ spanned by the vectors of the given orthogonal representation that correspond to vertices that are disjoint from $C$.
In particular, for a vertex $A \in \calF$, $W_A$ is the subspace spanned by the vectors associated with the neighborhood of $A$ in $G$, hence $u_A$ is orthogonal to $W_A$.
Since the dimension of a subspace spanned by the vectors of a closed neighborhood in $G$ is at most $\ell$, it follows that $\dim (W_A) \leq \ell -1$ for every $A \in \calF$.

Our goal is to prove that $\ell \geq \chi(G)$. To do so, we consider three cases, as described below.

Suppose first that for every vertex $\{i,j\} \in \calF$ there exists some $k \in [n] \setminus \{i,j\}$ for which both $\{i,k\}$ and $\{j,k\}$ are vertices of $G$ and the three vectors $u_{\{i,j\}}, u_{\{i,k\}}, u_{\{j,k\}}$ are pairwise proportional.
In this case, for every  vertex $\{i,j\} \in \calF$ associated with such an index $k$, every vertex $B \in \calF$ that is not contained in $\{i,j,k\}$ is disjoint from at least one of the sets $\{i,j\}$, $\{i,k\}$, and $\{j,k\}$, hence the vector $u_B$ is orthogonal to $u_{\{i,j\}}$ (and to $u_{\{i,k\}}$ and $u_{\{j,k\}}$ as well).
This implies that the vectors of the given orthogonal representation satisfy that every two of them are either proportional or orthogonal, hence they induce a proper coloring of the graph $K(\calF)$ with equivalence classes of proportional vectors as color classes.
This coloring satisfies that every color class is of size $3$, its locality is $\ell$, and every vertex has all colors in its closed neighborhood. This implies that $\ell \geq \chi(G)$, as required.

Suppose next that there exist distinct $i,j,k \in [n]$ for which both $\{i,j\}$ and $\{i,k\}$ are vertices of $G$ whereas $\{j,k\}$ is not, and the vectors $u_{\{i,j\}}, u_{\{i,k\}}$ are proportional.
Let $\calF' = \{ A \in \calF ~\mid~ i \notin A\}$ be the family of sets in $\calF$ that are contained in $[n] \setminus \{i\}$, and consider the subgraph $G' = K(\calF')$ of $G$.
Observe that every vertex $A \in \calF'$ is disjoint from at least one of the sets $\{i,j\}$ and $\{i,k\}$, because $i \notin A$ and $A \neq \{j,k\}$.
Since the vectors $u_{\{i,j\}}, u_{\{i,k\}}$ are proportional, for such an $A$ it holds that $u_A$ is orthogonal to $u_{\{i,j\}}$.
The restriction $(u_A)_{A \in \calF'}$ of the given orthogonal representation obviously forms an orthogonal representation of $G'$.
We turn to show that its locality is at most $\ell-1$.

Consider some vertex $A \in \calF'$, and denote by $\widetilde{W}_A$ the subspace of $\R^t$ spanned by the vectors $u_B$ with $B \in \calF'$ such that $A \cap B = \emptyset$.
Since for $B \in \calF'$, the vector $u_B$ is orthogonal to $u_{\{i,j\}}$, it follows that $u_{\{i,j\}} \notin \widetilde{W}_A$.
On the other hand, since $A$ is disjoint from at least one of the sets $\{i,j\}$ and $\{i,k\}$, it follows that $u_{\{i,j\}} \in W_A$.
The fact that $\calF' \subseteq \calF$ implies that $\widetilde{W}_A \subseteq W_A$, so we get that $\widetilde{W}_A \subsetneq W_A$, hence $\dim (\widetilde{W}_A) \leq \dim(W_A) -1 \leq \ell -2$.
This shows that the locality of the orthogonal representation $(u_A)_{A \in \calF'}$ of $G'$ is at most $\ell-1$, hence
\begin{eqnarray}\label{ineq:G'1}
\od_l(G',\R) \leq \ell-1.
\end{eqnarray}
Since the underlying ground set of the graph $G' = K(\calF')$ is of size $n-1$, we can apply the inductive assumption to obtain that
\begin{eqnarray}\label{ineq:G'2}
\chi(G') \leq \od_l(G',\R).
\end{eqnarray}
We further observe that
\begin{eqnarray}\label{ineq:G'3}
\chi(G) \leq \chi(G')+1,
\end{eqnarray}
because a proper coloring of $G'$ with $\chi(G')$ colors can be extended to a proper coloring of $G$ by assigning an additional color to all the vertices of $\calF \setminus \calF'$ which form an intersecting family.
Combining~\eqref{ineq:G'1},~\eqref{ineq:G'2}, and~\eqref{ineq:G'3}, it follows that
\[ \chi(G) - 1 \leq \chi(G') \leq \od_l (G',\R) \leq \ell -1,\]
which implies that $\ell \geq \chi(G)$, as required.

Otherwise, if none of the above two cases holds, there exists a vertex $\{i,j\} \in \calF$ such that for every $k \in [n] \setminus \{i,j\}$, either the sets $\{i,k\}$ and $\{j,k\}$ are not vertices of $\calF$, or at least one of these sets is a vertex of $\calF$ and its vector is not proportional to $u_{\{i,j\}}$.
As before, put $\calF' = \{ A \in \calF ~\mid~ i \notin A\}$, and consider the subgraph $G' = K(\calF')$ of $G$.
We turn to define an orthogonal representation $(\widetilde{u}_A)_{A \in \calF'}$ of $G'$ such that all of its vectors are orthogonal to $u_{\{i,j\}}$.
For every $A \in \calF'$ such that $j \notin A$, we simply define $\widetilde{u}_A = u_A$.
Every other vertex of $\calF'$ is of the form $\{j,k\}$ for $k \in [n] \setminus \{i,j\}$. If $u_{\{j,k\}}$ is not proportional to $u_{\{i,j\}}$ then we define $\widetilde{u}_{\{j,k\}}$ as the (nonzero) projection of $u_{\{j,k\}}$ to the subspace of $\R^t$ orthogonal to $u_{\{i,j\}}$, that is,
\[ \widetilde{u}_{\{j,k\}} = u_{\{j,k\}} - \frac{\langle u_{\{j,k\}}, u_{\{i,j\}} \rangle}{\langle u_{\{i,j\}}, u_{\{i,j\}} \rangle} \cdot  u_{\{i,j\}}.\]
Otherwise, it follows that $\{i,k\} \in \calF$ and that $u_{\{i,k\}}$ is not proportional to $u_{\{i,j\}}$, so we define $\widetilde{u}_{\{j,k\}}$ as the (nonzero) projection of $u_{\{i,k\}}$ to the subspace of $\R^t$ orthogonal to $u_{\{i,j\}}$, that is,
\[ \widetilde{u}_{\{j,k\}} = u_{\{i,k\}} - \frac{\langle u_{\{i,k\}}, u_{\{i,j\}} \rangle}{\langle u_{\{i,j\}}, u_{\{i,j\}} \rangle} \cdot u_{\{i,j\}}.\]

By definition, all the vectors $\widetilde{u}_A$ with $A \in \calF'$ are nonzero and are orthogonal to $u_{\{i,j\}}$.
We turn to show that $(\widetilde{u}_A)_{A \in \calF'}$ is an orthogonal representation of $G'$.
Consider two disjoint sets $A,B \in \calF'$.
If $j \notin A \cup B$ then we clearly have $\langle \widetilde{u}_A, \widetilde{u}_B \rangle = \langle u_A, u_B \rangle = 0$. Otherwise, without loss of generality, suppose that $A = \{j,k\}$ for some $k \in [n] \setminus \{i,j\}$. Since $B$ is disjoint from $A$, the vector $\widetilde{u}_B = u_B$ is orthogonal to the vectors $u_C$ with $C \in \{ \{i,j\}, \{i,k\},\{j,k\}\} \cap \calF$, and since $\widetilde{u}_A$ is a linear combination of them, we get that $\langle \widetilde{u}_A, \widetilde{u}_B \rangle = 0$, as required.

We next claim that the locality of the orthogonal representation $(\widetilde{u}_A)_{A \in \calF'}$ of $G'$ is at most $\ell-1$.
To prove it, it suffices to show that for every vertex $A \in \calF'$ the subspace $\widetilde{W}_A$ spanned by the vectors $\widetilde{u}_B$ with $B \in \calF'$ and $A \cap B = \emptyset$ satisfies $\dim(\widetilde{W}_A) \leq \ell-2$.
For a vertex $A \in \calF'$ such that $A \subseteq [n] \setminus\{i,j\}$, it holds that $\widetilde{W}_A \subseteq W_A$, because every vector $\widetilde{u}_B$ with $A \cap B = \emptyset$ is a linear combination of vectors $u_C$ with $C \subseteq B \cup \{i,j\}$ and thus with $A \cap C = \emptyset$.
In addition, since $u_{\{i,j\}}$ is orthogonal to $\widetilde{W}_A$, we have $u_{\{i,j\}} \in W_A \setminus \widetilde{W}_A$, implying that $\dim (\widetilde{W}_A) \leq \dim(W_A)-1 \leq \ell-2$, as required.
Every other vertex of $\calF'$, which is not contained in $[n] \setminus\{i,j\}$, is of the form $A = \{j,k\}$ for some $k \in [n] \setminus \{i,j\}$, hence it holds that $\widetilde{W}_A = W_{C}$ for $C = \{i,j,k\}$.
We turn to prove that
\begin{eqnarray}\label{eq:W_C}
\dim(W_C) \leq \ell-2,
\end{eqnarray}
which implies that $\dim(\widetilde{W}_A) \leq \ell-2$.
Observe that this will complete the proof of the theorem.
Indeed, by combining it with the above discussion it follows that $\od_l(G',\R) \leq \ell-1$.
By applying the inductive assumption to $G'$, as in the previous case, we obtain that
\[\chi(G) - 1 \leq \chi(G') \leq \od_l (G',\R) \leq \ell -1,\]
hence $\ell \geq \chi(G)$, as required.

To prove~\eqref{eq:W_C}, recall that the vectors assigned to the $2$-subsets of $C$ in $\calF$ include two vectors that are not proportional. In particular, it follows that $\calF$ includes either two or three $2$-subsets of $C$, and we consider the following two cases accordingly.

Assume first that all the three $2$-subsets of $C$ belong to $\calF$, and denote them by $A_1,A_2,A_3$.
Suppose in contradiction that $\dim (W_C) \geq \ell-1$.
For each $i \in [3]$, it follows from the definition that $W_{C} \subseteq W_{A_i}$, which by $\dim (W_{A_i}) \leq \ell -1$ implies that $W_C = W_{A_i}$. It therefore follows that $W_C = W_{A_1} = W_{A_2} = W_{A_3}$. Notice that every vertex $B \in \calF \setminus \{A_1,A_2,A_3\}$ is disjoint from at least one of $A_1,A_2,A_3$, hence its vector $u_B$ belongs to some $W_{A_i}$ and thus to $W_C$, so it is orthogonal to each of $u_{A_1}, u_{A_2}, u_{A_3}$. This implies that switching the vectors of the vertices $A_2$ and $A_3$ to $u_{A_1}$ results in another orthogonal representation of $G$, and it is not difficult to verify that its locality is at most $\ell$.
This orthogonal representation lies in a subspace of $\R^t$ whose dimension is at most $t-1$.
Indeed, $u_{A_1}, u_{A_2}, u_{A_3}$ are not pairwise proportional and they are all orthogonal to $u_B$ for every vertex $B \in \calF \setminus \{A_1,A_2,A_3\}$, so $\linspan(u_{A_2},u_{A_3})$ is not contained in the subspace of the modified representation.
By applying an orthogonal linear transformation from this subspace to $\R^{t-1}$, we get an orthogonal representation of $G$ that contradicts the minimality of $t$.

Assume next that only two of the three $2$-subsets of $C$ belong to $\calF$, denote them by $A_1,A_2$, and let $j \in [n]$ denote the element of their intersection.
As before, suppose in contradiction that $\dim (W_C) \geq \ell-1$, which implies that $W_C = W_{A_1} = W_{A_2}$.
Since one of the $2$-subsets of $C$ does not belong to $\calF$, it follows that every vertex $B \in \calF$ such that $j \notin B$ is disjoint from at least one of $A_1$ and $A_2$, hence its vector $u_B$ belongs to some $W_{A_i}$ and thus to $W_C$, so it is orthogonal to each of $u_{A_1}$ and $u_{A_2}$.
This implies that switching the vectors of all vertices $B \in \calF$ with $j \in B$ (including $A_2$) to $u_{A_1}$ results in another orthogonal representation of $G$.
Moreover, the locality of this orthogonal representation is at most $\ell$. Indeed, for a vertex that includes $j$, the vectors of its neighbors are unchanged, so the dimension of the subspace spanned by the vectors of its closed neighborhood is at most $\ell$.
Further, for a vertex that does not include $j$, the vectors of its neighbors that do not include $j$ are unchanged and are all orthogonal to $u_{A_1}$ and to $u_{A_2}$. All the other neighbors are assigned now the vector $u_{A_1}$, whereas their vectors according to the original representation include at least one of $u_{A_1}$ and $u_{A_2}$. This implies that the dimension of the subspace spanned by the vectors assigned by the modified representation to the closed neighborhood of a vertex that does not include $j$ is at most $\ell$.

The obtained orthogonal representation lies in a subspace of $\R^t$ whose dimension is at most $t-1$.
Indeed, $u_{A_1}$ and $u_{A_2}$ are not proportional and they are both orthogonal to $u_B$ for every vertex $B \in \calF$ with $j \notin B$. It follows that $u_{A_2}$ does not belong to the subspace spanned by the modified representation, hence its dimension is at most $t-1$.
By applying an orthogonal linear transformation from this subspace to $\R^{t-1}$, we again get an orthogonal representation of $G$ that contradicts the minimality of $t$, as required. This completes the proof.
\end{proof}

As an immediate corollary of Theorem~\ref{thm:xi_l_K(F)}, we obtain the following.

\begin{corollary}
For every integer $n \geq 4$,~ $\od_l(K(n,2),\R) = \od_l(S(n,2),\R) = n-2$.
\end{corollary}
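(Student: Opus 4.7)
The plan is to derive the corollary directly from Theorem~\ref{thm:xi_l_K(F)}, and the entire task reduces to identifying both $K(n,2)$ and $S(n,2)$ as complements of line graphs and then recalling their chromatic numbers. There is no real obstacle; the work is in verifying the identification cleanly.

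First, I would observe that $K(n,2)$ is the complement of the line graph of $K_n$. Indeed, its vertex set is the family of all $2$-subsets of $[n]$, which is precisely the edge set of $K_n$, and two such $2$-subsets are adjacent in $K(n,2)$ exactly when they are disjoint, i.e., when the corresponding edges of $K_n$ share no endpoint, i.e., when they are non-adjacent in $L(K_n)$. Similarly, $S(n,2)$ is the complement of the line graph of $\overline{C_n}$: its vertices are the $2$-subsets of $[n]$ with no two consecutive elements modulo $n$, which are exactly the edges of $\overline{C_n}$, and adjacency in $S(n,2)$ again corresponds to disjointness of these edges, hence to non-adjacency in $L(\overline{C_n})$. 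This identification for $S(n,2)$ is already noted in the introduction.

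Having placed both graphs in the hypothesis of Theorem~\ref{thm:xi_l_K(F)}, I would apply that theorem to conclude
\[ \od_l(K(n,2),\R) = \chi(K(n,2)) \quad\text{and}\quad \od_l(S(n,2),\R) = \chi(S(n,2)). \]
Finally, I would invoke the Lov\'asz-Kneser theorem~\cite{LovaszKneser}, giving $\chi(K(n,2)) = n-2$ for every $n \geq 4$, together with Schrijver's theorem~\cite{Schrijver78}, giving $\chi(S(n,2)) = n-2$ for every $n \geq 4$. Combining these equalities yields $\od_l(K(n,2),\R) = \od_l(S(n,2),\R) = n-2$, as claimed.
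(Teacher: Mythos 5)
Your proposal is correct and is exactly the argument the paper intends: identify $K(n,2)$ and $S(n,2)$ as complements of line graphs (of $K_n$ and $\overline{C_n}$ respectively, the latter identification already noted in the introduction), then apply Theorem~\ref{thm:xi_l_K(F)} together with the known chromatic numbers $\chi(K(n,2)) = \chi(S(n,2)) = n-2$. The paper labels this an immediate corollary and gives no further details, so your write-up simply makes that implicit step explicit.
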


\section{The Local Chromatic Number of \texorpdfstring{$K(n,3)$}{K(n,3)}}\label{sec:K(n,3)}

It was shown in~\cite{LovaszKneser} that the chromatic number of the Kneser graph $K(n,k)$ is $n-2k+2$ for all integers $n \geq 2k$.
This implies, for integers $n \geq 3k$, that the local chromatic number of $K(n,k)$ satisfies $\chi_l(K(n,k)) \geq n-3k+3$.
To see this, observe that the subgraph of $K(n,k)$ induced by the (open) neighborhood of any vertex is isomorphic to $K(n-k,k)$, whose chromatic number is $n-3k+2$.
Hence, the locality of every proper coloring of $K(n,k)$ is at least  $n-3k+3$.
For $k=3$, this shows that $\chi_l(K(n,3)) \geq n-6$ for all integers $n \geq 9$.
In this section we improve on this bound and prove that for every $n \geq 6$ it holds that $\chi_l(K(n,3)) = n-4$, confirming Theorem~\ref{thm:K(n,3)}.

\begin{proof}[ of Theorem~\ref{thm:K(n,3)}]
Every graph $G$ satisfies $\chi_l(G) \leq \chi(G)$, hence for every $n \geq 6$, it holds that
\[\chi_l(K(n,3)) \leq \chi(K(n,3)) = n-4.\]
It thus suffices to prove a matching lower bound.
We start with the two easy cases of $n \in \{6,7\}$.
The graph $K(6,3)$ is a perfect matching on $20$ vertices, and in particular it contains an edge, hence $\chi_l(K(6,3)) \geq 2$.
The chromatic number of $K(7,3)$ is $3$, so it is not bipartite, hence it contains a cycle of odd length.
It is easy to see that the local chromatic number of such a cycle is $3$, implying that $\chi_l(K(7,3)) \geq 3$.
From now on, we may and will assume that $n \geq 8$.

Let $V$ denote the vertex set of $K(n,3)$ for some $n \geq 8$, and put $\ell = \chi_l(K(n,3))$. Let $c: V \rightarrow [m]$ be a proper coloring of $K(n,3)$ with locality $\ell$, where the total number $m$ of colors is minimized over all such colorings.
Suppose for the sake of contradiction that $\ell \leq n-5$.

Consider the number $M$ of pairs $(A,j)$ of a vertex $A \in V$ and a color $j \in [m]$ for which $A$ {\em sees} the color $j$ in its (open) neighborhood, that is, there exists a vertex $B \in V$ satisfying $A \cap B = \emptyset$ and $c(B) = j$.
Since the locality of the coloring $c$ is $\ell$, it follows that every vertex sees at most $\ell-1$ colors, hence
\[M \leq (\ell-1) \cdot \tbinom{n}{3} \leq (n-6) \cdot \tbinom{n}{3}.\]
We will obtain a contradiction by proving that
\begin{eqnarray*}
M > (n-6) \cdot \tbinom{n}{3}.
\end{eqnarray*}
This inequality will be established by analyzing the number of vertices that see each of the $m$ colors of the coloring $c$.
The number of vertices that see a given color is significantly affected by the structure of its color class (which forms an independent set in $K(n,3)$).
This requires us to deal separately with color classes of several types, as described next.

Let $\mathcal{I}$ be the collection of color classes of the coloring $c: V \rightarrow [m]$.
By the definition of the graph $K(n,3)$, every $I \in \mathcal{I}$ is an intersecting family of $3$-subsets of $[n]$, and it can be seen that its size satisfies $|I| \geq 2$.
Indeed, if $I$ consists of a single vertex, then this vertex sees at most $\ell-1\leq n-6$ colors, so its color can be replaced by one of the other $m-1 \geq \chi(K(n,3))-1=n-5$ colors without increasing the locality, a contradiction to the minimality of $m$.
We consider four types of color classes $I$, referred to as~\eqref{itm:type_a},~\eqref{itm:type_b},~\eqref{itm:type_c} and~\eqref{itm:type_d}, defined as follows.
\begin{enumerate}[(a).]
  \item\label{itm:type_a} There are two elements of $[n]$ that belong to all vertices of $I$.
  \item\label{itm:type_b} $|I| =2$, and the intersection size of the two vertices of $I$ is $1$.
  \item\label{itm:type_c} $|I| \geq 3$, there is a single element of $[n]$ that belongs to all vertices of $I$, and either
    \begin{itemize}
      \item the maximum size of a union of three vertices of $I$ is at most $5$, or
      \item the maximum size of a union of three vertices of $I$ is $6$, and there is an element of $[n]$ that belongs to all but one of the vertices of $I$.
    \end{itemize}
  \item\label{itm:type_d} $I$ is not of types~\eqref{itm:type_a},~\eqref{itm:type_b} and~\eqref{itm:type_c}, that is, $|I| \geq 3$ and either
  \begin{itemize}
    \item there is a single element of $[n]$ that belongs to all vertices of $I$, the maximum size of a union of three vertices of $I$ is $6$, and there is no element of $[n]$ that belongs to all but one of the vertices of $I$, or
    \item there is a single element of $[n]$ that belongs to all vertices of $I$, and the maximum size of a union of three vertices of $I$ is $7$, or
    \item there is no element of $[n]$ that belongs to all vertices of $I$.
  \end{itemize}
\end{enumerate}

For a color class $I \in \mathcal{I}$, let $\overline{M}_I$ denote the number of vertices of $K(n,3)$ that {\em do not} see the color of $I$, i.e., the vertices that intersect each of the vertices of $I$ (including the vertices of $I$).
The following lemma provides an upper bound on $\overline{M}_I$ for each type of $I$.
(Note that Lemma~\ref{lemma:color_classes} below and Lemma~\ref{lemma:K(n,3)-J} that follows it are stated under the assumption for contradiction that $\ell \leq n-5$.)

\begin{lemma}\label{lemma:color_classes}
Let $I \in \mathcal{I}$ be a color class of the coloring $c: V \rightarrow [m]$ of $K(n,3)$. Then, the following holds.
\begin{enumerate}
  \item\label{type:a} If $I$ is of type~\eqref{itm:type_a} then $\overline{M}_I \leq \binom{n-1}{2}+\binom{n-2}{2}+(n-4)$.
  \item\label{type:b} If $I$ is of type~\eqref{itm:type_b} then $\overline{M}_I = \binom{n-1}{2}+4(n-4)$.
  \item\label{type:c} If $I$ is of type~\eqref{itm:type_c} then $\overline{M}_I \leq \binom{n-1}{2}+3(n-4)+1$.
  \item\label{type:d} If $I$ is of type~\eqref{itm:type_d} then $\overline{M}_I \leq \max \Big (7n-25, \binom{n-1}{2}+n \Big )$.
\end{enumerate}
In addition, let $G$ be a graph obtained from $K(n,3)$ by removing from its vertex set either
\begin{enumerate}[~i.]
 \item\label{remove:a} any number of color classes of type~\eqref{itm:type_a}, or
  \item\label{remove:b} a color class of type~\eqref{itm:type_b}, or
  \item\label{remove:a_b} two color classes, one of type~\eqref{itm:type_a} and one of type~\eqref{itm:type_b}, or
  \item\label{remove:c} a color class of type~\eqref{itm:type_c}.
\end{enumerate}
Then, $\chi(G) = \chi(K(n,3)) = n-4$.
\end{lemma}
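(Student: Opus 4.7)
The plan is to prove the two parts of the lemma separately.

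\emph{First part (the $\overline{M}_I$ bounds).} For each type, I would enumerate the 3-subsets $B \in \binom{[n]}{3}$ that intersect every $A \in I$ by conditioning on $B$'s intersection with the distinguished elements of $I$. For type~(a) with common pair $\{i,j\}$, split into $B \ni i$ (contributing $\binom{n-1}{2}$), $B \ni j$ with $i \notin B$ (contributing $\binom{n-2}{2}$), and $B \cap \{i,j\}=\emptyset$, which forces $B$ to contain the third elements of at least two distinct vertices of $I$ and is bounded by $n-4$ using $|I|\geq 2$. For type~(b), writing $A_1 \cap A_2 = \{i\}$, $A_1 = \{i,j_1,k_1\}$, $A_2 = \{i,j_2,k_2\}$, the $B$'s containing $i$ contribute $\binom{n-1}{2}$, and a direct case analysis on $|B \cap \{j_r,k_r\}|$ for $B \not\ni i$ gives exactly $4(n-5)+4 = 4(n-4)$. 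For type~(c), the max-union restriction forces $\{A \setminus \{i\} : A \in I\}$ into a few explicit configurations (e.g.\ three 2-subsets of a 4-element set, or the ``all-but-one'' pattern), and in each of them the $B \not\ni i$ hitting all the 2-subsets are at most $3(n-4)+1$. For type~(d), split by the three defining subcases: when there is a common element $i$, the bound $\binom{n-1}{2}+n$ emerges from the $B$'s containing $i$ plus a correction term reflecting the richer union structure; when there is no common element, an element-counting argument (using that no three vertices of $I$ share a common element) yields $\overline{M}_I \leq 7n-25$.

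\emph{Second part (the identity $\chi(G)=n-4$).} The upper bound $\chi(G) \leq \chi(K(n,3)) = n-4$ is automatic. For the matching lower bound, my strategy is to exhibit an $(n-4)$-chromatic subgraph of $G$ via a Schrijver-type construction: for any cyclic ordering $\sigma$ of $[n]$, the Schrijver graph $S_\sigma(n,3)$ of all 3-subsets with no two $\sigma$-consecutive elements satisfies $\chi(S_\sigma(n,3))=n-4$, and embeds into $G$ as soon as $V(S_\sigma(n,3)) \cap V' = \emptyset$. A vertex $A=\{a,b,c\}$ lies outside $S_\sigma$ exactly when some pair from $A$ is $\sigma$-consecutive, so for each removal case I would design $\sigma$ so that every removed vertex fails this test. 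In case~ii, placing the two $\sigma$-neighbors of the unique common element $i$ of the type~(b) pair $\{A_1,A_2\}$ as one non-$i$ element from each of $A_1,A_2$ knocks both out of $S_\sigma$; case~iii combines this with making the common pair of the type~(a) class $\sigma$-consecutive; case~iv exploits the structural restrictions on type~(c) (small union or ``all-but-one'') to route $\sigma$ through the common element with the correct $\sigma$-neighbors.

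The main obstacle is case~i, where the common pairs of the removed type~(a) classes may form a graph on $[n]$ that is not a subgraph of any Hamiltonian cycle, so no single cyclic ordering can make every relevant pair consecutive. To handle this I would invoke the standing assumption $\ell \leq n-5$ on the coloring $c$ from which the color classes are drawn: the locality bound implies that no closed neighborhood in $K(n,3)$ sees too many colors, which severely restricts how the common pairs of the type~(a) color classes of $c$ can overlap on small vertex sets. Combining this structural constraint with the Schrijver-type template — or, where it fails, with a direct argument that any hypothetical $(n-5)$-coloring of $G$ would extend to an $(n-4)$-coloring of $K(n,3)$ and so contradict $\chi(K(n,3))=n-4$ — should then yield $\chi(G) \geq n-4$. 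I expect the technical heart of the argument to lie in the interaction between this combinatorial locality constraint on $c$ and the (topological) lower bound on $\chi(K(n,3))$.
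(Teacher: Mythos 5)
Your outline of the $\overline{M}_I$ bounds (first part) matches the paper's approach closely: condition on $B$'s intersection with the distinguished elements, and for types (c)/(d) exhaust the handful of configurations forced by the small-union and common-element constraints.

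For the second part, cases ii and iv are handled essentially as you describe. But there is a genuine gap in case i (and a related one in case iii). You correctly identify the obstacle: the common pairs of the removed type~(a) classes might not all be embeddable as edges of a single Hamiltonian cycle on $[n]$. However, your proposed fix — leaning on the locality bound $\ell \leq n-5$ to ``severely restrict how the common pairs can overlap'' — is not the right tool, and you do not make it concrete. The paper's key observation is different and much sharper: the common pairs of distinct type~(a) color classes must be \emph{pairwise disjoint}. The reason is the minimality of $m$, the number of colors, among all proper colorings of locality $\ell$: if two type~(a) classes $I_1$, $I_2$ had common pairs sharing an element $i$, then every vertex of $I_1\cup I_2$ would contain $i$, so $I_1\cup I_2$ would still be an intersecting family (hence independent in $K(n,3)$), and merging the two colors would produce a proper coloring with fewer colors and locality at most $\ell$, contradicting minimality of $m$. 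Once the pairs are pairwise disjoint, a permutation of $[n]$ sends them all to cyclically consecutive pairs simultaneously, and the Schrijver embedding goes through. The same minimality-of-$m$ device is also needed in case iii to guarantee that the common element of the type~(b) class lies outside the common pair of the type~(a) class; without it your ``design $\sigma$ so that...'' step for case iii is not justified. Your fallback (``a direct argument that any hypothetical $(n-5)$-coloring of $G$ would extend'') is not a plan but a restatement of the goal, so as written the proposal does not close case i.
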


\begin{remark}\label{remark:decreasing}
Note that the bounds on $\overline{M}_I$ given in the lemma for the four types of $I$ are in a decreasing order, namely, for all $n \geq 8$, it holds that
\[ \tbinom{n-1}{2}+\tbinom{n-2}{2}+(n-4) > \tbinom{n-1}{2}+4(n-4) > \tbinom{n-1}{2}+3(n-4)+1 > \max \Big (7n-25, \tbinom{n-1}{2}+n \Big ).\]
\end{remark}

Before proving Lemma~\ref{lemma:color_classes}, we show how it is applied to prove the assertion of the theorem.
Recall that to get a contradiction to the assumption $\ell \leq n-5$, it suffices to prove that
\begin{eqnarray}\label{ineq:K(n,3)}
M- (n-6) \cdot \tbinom{n}{3}  > 0.
\end{eqnarray}
The number $M$ of pairs $(A,j)$ of a vertex $A \in V$ and a color $j \in [m]$ that it sees can be written as
\begin{eqnarray}\label{def:M}
M = \sum_{I \in \mathcal{I}}{\Big (\tbinom{n}{3} - \overline{M}_I \Big )}.
\end{eqnarray}
Our strategy to prove~\eqref{ineq:K(n,3)} is the following.
If all the color classes $I \in \mathcal{I}$ are not seen by a relatively `few' vertices and thus have `small' values of $\overline{M}_I$ (e.g., when all of them are of type~\eqref{itm:type_d}), then it can be verified, using~\eqref{def:M}, that~\eqref{ineq:K(n,3)} holds. However, for the case where some of the color classes $I \in \mathcal{I}$ have `large' values of $\overline{M}_I$, we will provide a lower bound on the number $m$ of the colors of $c$, which will again allow us to derive the required inequality.
For such a lower bound on the number of colors, we will need the following lemma.

\begin{lemma}\label{lemma:K(n,3)-J}
For an integer $r \geq 0$, let $J \subseteq V$ be a union of $r$ color classes of the coloring $c: V \rightarrow [m]$, such that the graph $G$ obtained from $K(n,3)$ by removing the vertices of $J$ satisfies $\chi(G) = n-4$. Then, $m \geq r + n-3$.
\end{lemma}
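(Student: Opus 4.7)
The plan is to establish the straightforward bound $m \geq r + n - 4$ first, and then to squeeze out the missing unit by exploiting the minimality of $m$ together with the locality hypothesis $\ell \leq n - 5$. The main obstacle will lie in the second step, which requires a careful modification of $c$.

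Since $J$ is a disjoint union of $r$ color classes of $c$, the $r$ colors appearing on $J$ are absent from $V(G)$. Hence the restriction $c|_{V(G)}$ uses exactly $m - r$ distinct colors and is a proper coloring of $G$. Combined with the hypothesis $\chi(G) = n - 4$, this immediately yields $m - r \geq n - 4$, i.e., $m \geq r + n - 4$, one short of the desired inequality.

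To upgrade this to $m \geq r + n - 3$, I would argue by contradiction: suppose $m = r + n - 4$, so that $c|_{V(G)}$ is a chromatic coloring of $G$ using exactly $n - 4$ colors. The goal is then to construct from $c$ a proper coloring $c'$ of $K(n,3)$ with only $m - 1$ colors whose locality is still at most $\ell$, contradicting the minimality of $m$. A natural candidate for $c'$ is obtained through a color-class merge: pick a color class $I \subseteq J$ of color $\sigma$ and a color $\sigma'$ from the $n - 4$ colors used on $V(G)$, and set $c'(v) = \sigma'$ for $v \in I$ and $c'(u) = c(u)$ otherwise. Merging two colors can only weakly decrease the number of distinct colors appearing in every closed neighborhood, so the locality of $c'$ is automatically at most that of $c$. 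For $c'$ to be a proper coloring of $K(n,3)$, it suffices that $\sigma' \notin c(N(I))$, where $N(I) = \bigcup_{v \in I} N(v)$.

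The main obstacle is to exhibit such a pair $(I, \sigma')$. A priori, $N(I)$ could meet all $n - 4$ colors used on $V(G)$, blocking any direct merge. The crucial source of flexibility is the locality bound: since $c(v) \notin c(N(v))$, the inequalities $|c(N(v))| \leq \ell - 1 \leq n - 6$ guarantee that every single vertex of $J$ misses at least $(n-4) - (n-6) = 2$ of the $V(G)$-colors in its open neighborhood. Together with the freedom to choose which of the $r$ color classes in $J$ to attack, this should leave enough room to locate a valid merge. In the more delicate case where no single $\sigma'$ works globally for any $I$, I would fall back on a refined modification—splitting $I$ into parts and assigning to each part a distinct $V(G)$-color chosen so as not to introduce any new color into a given external neighborhood—or directly extract, from the strong structural restrictions forced on the color classes of $J$, a proper $(n-5)$-coloring of $G$ contradicting $\chi(G) = n - 4$. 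Verifying that one of these strategies always succeeds under the hypotheses $\chi(G) = n - 4$ and $\ell \leq n - 5$ is the technical heart of the lemma.
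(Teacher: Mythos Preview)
Your proposal takes an unnecessarily circuitous route and leaves its central step unverified. You aim to contradict the minimality of $m$ by merging a color class $I\subseteq J$ into some color $\sigma'$ used on $V(G)$, but as you yourself note, nothing prevents $N(I)$ from meeting all $n-4$ colors of $V(G)$; your fallback of splitting $I$ into pieces with different target colors can increase the locality at a common neighbor (such a neighbor formerly saw the single color $\sigma$ and may now see two new colors in its place), and the final fallback is not carried out.

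The paper's argument avoids all of this by contradicting $\chi(G)=n-4$ rather than the minimality of $m$. Restrict $c$ to $V(G)$; this is a proper coloring of $G$ with $m-r$ colors and locality at most $\ell\le n-5<\chi(G)$. Now invoke the elementary fact that any proper coloring of a graph $H$ with locality strictly below $\chi(H)$ must use more than $\chi(H)$ colors: otherwise, pick any color class; each of its vertices sees at most $\chi(H)-2$ colors among its neighbors, so it can be recolored with one of the remaining $\chi(H)-1$ colors, yielding a proper $(\chi(H)-1)$-coloring of $H$. Applied to $G$ this gives $m-r>n-4$, i.e.\ $m\ge r+n-3$, in one stroke. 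Note that this recoloring happens entirely inside $G$, does not touch $J$, does not need to preserve locality, and does not invoke the minimality of $m$ at all. You actually gesture at this idea in your last fallback, but you frame it as requiring ``strong structural restrictions forced on the color classes of $J$''; in fact it needs nothing of the sort and is the entire proof.
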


\begin{proof}
The restriction of the coloring $c$ to the vertex set $V \setminus J$ of $G$ forms a proper coloring of $G$ with locality at most $\ell$ and with $m-r$ colors.
By assumption, we have $\ell \leq n-5 < n-4 = \chi(G)$.
However, every proper coloring of a graph $G$ with locality strictly smaller than $\chi(G)$ must use more than $\chi(G)$ colors, since otherwise one could avoid one of the colors by assigning to each vertex of that color some color different from those of its neighbors, resulting in a proper coloring of $G$ with less than $\chi(G)$ colors.
It thus follows that the number of colors used by $c$ in $G$ is larger than its chromatic number, that is, $m -r > n-4$, yielding that $m \geq r +n-3$, as desired.
\end{proof}

Equipped with Lemmas~\ref{lemma:color_classes} and~\ref{lemma:K(n,3)-J}, we prove~\eqref{ineq:K(n,3)} by considering the following four cases.
\begin{itemize}
  \item Suppose that there are $r \geq 2$ color classes in $\mathcal{I}$ of type~\eqref{itm:type_a}. By Item~\ref{remove:a} of Lemma~\ref{lemma:color_classes}, the chromatic number of the graph obtained from $K(n,3)$ by removing these $r$ color classes is $n-4$.
      By Lemma~\ref{lemma:K(n,3)-J}, we have $m \geq r+n-3$, so there are at least $n-3$ color classes in $\mathcal{I}$ of types~\eqref{itm:type_b},~\eqref{itm:type_c} or~\eqref{itm:type_d}.
      This implies, using Lemma~\ref{lemma:color_classes}, that there are $r \geq 2$ color classes $I \in \mathcal{I}$ that satisfy $\overline{M}_I \leq \binom{n-1}{2}+\binom{n-2}{2}+(n-4)$ and at least $n-3$ other color classes $I \in \mathcal{I}$ that satisfy $\overline{M}_I \leq \binom{n-1}{2}+4(n-4)$ (see Remark~\ref{remark:decreasing}). By~\eqref{def:M}, this implies that
      \begin{eqnarray*}
        M - (n-6) \cdot \tbinom{n}{3} &\geq& 2 \cdot \Big (\tbinom{n}{3} - \big [\tbinom{n-1}{2}+\tbinom{n-2}{2}+(n-4) \big ] \Big )\\
         &+& (n-3) \cdot \Big  (\tbinom{n}{3} - \big [ \tbinom{n-1}{2}+4(n-4) \big ] \Big ) - (n-6) \cdot \tbinom{n}{3}  \\
         &=& \tfrac{n^3}{3} - \tfrac{11n^2}{2} + \tfrac{181n}{6}-45.
      \end{eqnarray*}
      A simple calculation shows that the above is positive for all $n \geq 8$, as required for~\eqref{ineq:K(n,3)}.
  \item Suppose next that there is exactly one color class in $\mathcal{I}$ of type~\eqref{itm:type_a}. \\
  If there is at least one color class in $\mathcal{I}$ of type~\eqref{itm:type_b}, then by Item~\ref{remove:a_b} of Lemma~\ref{lemma:color_classes}, the chromatic number of the graph obtained from $K(n,3)$ by removing the color class of type~\eqref{itm:type_a} and one color class of type~\eqref{itm:type_b} is $n-4$.
      By Lemma~\ref{lemma:K(n,3)-J}, there are at least $n-3$ other color classes in $\mathcal{I}$ of types~\eqref{itm:type_b},~\eqref{itm:type_c} or~\eqref{itm:type_d}.
      This implies, using Lemma~\ref{lemma:color_classes}, that there is a color class $I \in \mathcal{I}$ that satisfies $\overline{M}_I \leq \binom{n-1}{2}+\binom{n-2}{2}+(n-4)$, and there are at least $n-2$ other color classes $I \in \mathcal{I}$ that satisfy $\overline{M}_I \leq \binom{n-1}{2}+4(n-4)$ (see Remark~\ref{remark:decreasing}). By~\eqref{def:M}, this implies that
      \begin{eqnarray*}
        M - (n-6) \cdot \tbinom{n}{3} &\geq& 1 \cdot \Big ( \tbinom{n}{3} - \big [\tbinom{n-1}{2}+\tbinom{n-2}{2}+(n-4) \big ] \Big )\\
         &+& (n-2) \cdot \Big  (\tbinom{n}{3} - \big [\tbinom{n-1}{2}+4(n-4) \big ] \Big ) - (n-6) \cdot \tbinom{n}{3}.
      \end{eqnarray*}
      The above can be only larger than what we have got in the previous case, so we are done.

      If, however, there is no color class in $\mathcal{I}$ of type~\eqref{itm:type_b}, then by Item~\ref{remove:a} of Lemma~\ref{lemma:color_classes}, the chromatic number of the graph obtained from $K(n,3)$ by removing the single color class of type~\eqref{itm:type_a} is $n-4$.
      By Lemma~\ref{lemma:K(n,3)-J}, there are at least $n-3$ color classes in $\mathcal{I}$ of types~\eqref{itm:type_c} or~\eqref{itm:type_d}.
      Combining~\eqref{def:M} with Lemma~\ref{lemma:color_classes} (see Remark~\ref{remark:decreasing}), we obtain that
      \begin{eqnarray*}
        M - (n-6) \cdot \tbinom{n}{3} &\geq& 1 \cdot \Big (\tbinom{n}{3} - \big [\tbinom{n-1}{2}+\tbinom{n-2}{2}+(n-4) \big ] \Big )\\
         &+& (n-3) \cdot \Big  (\tbinom{n}{3} - \big [\tbinom{n-1}{2}+3(n-4) +1 \big] \Big ) - (n-6) \cdot \tbinom{n}{3} \\
         &=& \tfrac{n^3}{6} -3 n^2 +\tfrac{113n}{6}-30.
      \end{eqnarray*}
      A simple calculation shows that the above is positive for all $n \geq 8$, as required for~\eqref{ineq:K(n,3)}.

  \item Suppose next that no color class of $\mathcal{I}$ is of type~\eqref{itm:type_a} and that there is at least one color class of type~\eqref{itm:type_b} or~\eqref{itm:type_c}.
      By Items~\ref{remove:b} and~\ref{remove:c} of Lemma~\ref{lemma:color_classes}, the chromatic number of the graph obtained from $K(n,3)$ by removing the vertices of one such color class is $n-4$.
      By Lemma~\ref{lemma:K(n,3)-J}, there are at least $n-2$ color classes in $\mathcal{I}$.
      Combining~\eqref{def:M} with Lemma~\ref{lemma:color_classes} (see Remark~\ref{remark:decreasing}), we obtain that
      \begin{eqnarray*}
        M - (n-6) \cdot \tbinom{n}{3} &\geq& (n-2) \cdot \Big  (\tbinom{n}{3} - \big [\tbinom{n-1}{2}+4(n-4) \big] \Big ) - (n-6) \cdot \tbinom{n}{3}\\
        & = &  \tfrac{1}{6} \cdot (n-2)(n-9)(n-10).
      \end{eqnarray*}
      The obtained bound is clearly non-negative for all integers $n \geq 8$.
      In fact, it can be seen that the above inequality is strict for every such $n$. To see this, observe that either the number of color classes in $\mathcal{I}$ is strictly larger than $n-2$ or at least one of them is not of type~\eqref{itm:type_b}, because every color class of this type includes only two vertices, whereas for all $n \geq 8$ it holds that $\binom{n}{3} > 2 \cdot (n-2)$.
      We therefore derive the required inequality~\eqref{ineq:K(n,3)}.

  \item We are left with the case where all the color classes of $\mathcal{I}$ are of type~\eqref{itm:type_d}. By Lemma~\ref{lemma:K(n,3)-J}, applied with $r=0$, the number of color classes of $\mathcal{I}$ is at least $n-3$. Combining~\eqref{def:M} with Lemma~\ref{lemma:color_classes}, we obtain that
      \begin{eqnarray*}
        M - (n-6) \cdot \tbinom{n}{3} &\geq& (n-3) \cdot \big  (\tbinom{n}{3} - \max \big (7n-25,~\tbinom{n-1}{2}+n \big) \big ) - (n-6) \cdot \tbinom{n}{3}\\
        &=& \tfrac{1}{2} \cdot \min \big ( n^2-3n+6,~ n^3-17n^2+94n-150 \big).
      \end{eqnarray*}
      A simple calculation shows that the above is positive for all $n \geq 8$, as required for~\eqref{ineq:K(n,3)}.
\end{itemize}

It remains to prove Lemma~\ref{lemma:color_classes}.

\begin{proof}[ of Lemma~\ref{lemma:color_classes}]
For Item~\ref{type:a}, let $I$ be a color class of type~\eqref{itm:type_a}. Since $|I| \geq 2$, $I$ includes two vertices of the form $A_1 = \{i_1,i_2,i_3\}$, $A_2 = \{i_1,i_2,i_4\}$ for distinct $i_1, \ldots, i_4 \in [n]$, where all the vertices of $I$ include both $i_1$ and $i_2$. The vertices of $K(n,3)$ that do not see the color class of $I$ must intersect both $A_1$ and $A_2$ and thus must include $i_1$ or $i_2$ or both $i_3$ and $i_4$. It thus follows that
\[\overline{M}_I \leq \tbinom{n-1}{2}+ \tbinom{n-2}{2}+(n-4).\]
We turn to prove Item~\ref{remove:a}.
Suppose that the coloring $c$ has $r$ color classes of type~\eqref{itm:type_a}. The vertices of every such color class share a pair of common elements. Observe that these $r$ pairs are pairwise disjoint, because otherwise one could merge two color classes into one to obtain a proper coloring with fewer colors and yet locality at most $\ell$, in contradiction to the minimality of $m$.
Now, notice that the graph $K(n,3)$ is isomorphic to the one obtained by applying a permutation of $[n]$ to the elements of its vertices. Recall that the Schrijver graph $S(n,3)$ is the subgraph of $K(n,3)$ induced by the vertices that include no two consecutive elements modulo $n$.
By considering a permutation of $[n]$ that maps each of the pairwise disjoint $r$ pairs to consecutive elements modulo $n$, it follows that the graph obtained from $K(n,3)$ by removing the color classes of type~\eqref{itm:type_a} contains a subgraph isomorphic to $S(n,3)$. Since the chromatic number of the latter is $n-4$, the removal of the vertices does not change the chromatic number, as required.

For Item~\ref{type:b}, let $I$ be a color class of type~\eqref{itm:type_b}, so $I$ includes precisely two vertices of the form $A_1 = \{i_1,i_2,i_3\}$ and $A_2 = \{i_1,i_4,i_5\}$ for distinct $i_1, \ldots, i_5 \in [n]$. The vertices that do not see the color of $I$ are those that include $i_1$ or intersect both $\{i_2,i_3\}$ and $\{i_4,i_5\}$. Hence, \[\overline{M}_I = \tbinom{n-1}{2} +2 \cdot 2 \cdot (n-5) +4 = \tbinom{n-1}{2} +4(n-4).\]
To prove Item~\ref{remove:b}, observe that by applying a permutation of $[n]$ to the elements of the vertices of $K(n,3)$, one can ensure that each of $A_1$ and $A_2$ includes consecutive elements (say, by a permutation that maps $i_j$ to $j$ for all $j \in [5]$). Hence, by removing the vertices of $I$ from $K(n,3)$ we get a graph that contains a subgraph isomorphic to $S(n,3)$, so its chromatic number is $n-4$.

For Item~\ref{remove:a_b}, we claim that given two color classes of $c$, $I_1$ of type~\eqref{itm:type_a} and $I_2$ of type~\eqref{itm:type_b}, the graph obtained from $K(n,3)$ by removing the vertices of $I_1 \cup I_2$ has chromatic number $n-4$. To see this, denote by $i_1$ and $i_2$ the two common elements of the vertices of $I_1$, and denote by $\{i_3, i_4, i_5\}$ and $\{i_3, i_6, i_7\}$ the vertices of $I_2$ which intersect only at $i_3$. Note that none of $i_1$ and $i_2$ belongs to both these sets, and thus $i_3 \notin \{i_1, i_2\}$, since otherwise one could merge the color classes $I_1$ and $I_2$ into one to obtain a proper coloring with fewer colors and yet locality at most $\ell$, in contradiction to the minimality of $m$.
Hence, it can be assumed, without loss of generality, that $i_4,i_6,i_7$ are all different from $i_1$.
Applying a permutation of $[n]$ that maps $i_1$ to $1$, $i_2$ to $2$, and $i_j$ to $j$ for each $j \in [7]$ such that $i_j \notin \{i_1,i_2\}$, it follows that all the vertices of $I_1 \cup I_2$ include consecutive elements, hence by removing them from $K(n,3)$ we still have a subgraph isomorphic to $S(n,3)$, so the chromatic number is $n-4$.

For Items~\ref{type:c} and~\ref{remove:c}, let $I$ be a color class of type~\eqref{itm:type_c}, so $|I| \geq 3$ and there exists a single element $i_1 \in [n]$ that belongs to all of its vertices. Consider the following three possibilities.
\begin{itemize}
  \item Suppose that the maximum size of a union of three vertices of $I$ is $4$.
  Here, every two vertices of $I$ intersect at two elements, which implies that $I$ includes precisely three vertices of the form $A_1 = \{i_1,i_2,i_3\}$, $A_2 = \{i_1,i_2,i_4\}$, and $A_3 = \{i_1,i_3,i_4\}$ for distinct $i_1, \ldots, i_4 \in [n]$. The vertices that do not see the color of $I$ are those that include $i_1$ or at least two elements of $\{i_2,i_3,i_4\}$, hence $\overline{M}_I = \binom{n-1}{2}+3(n-4)+1$. By applying a permutation of $[n]$ that maps $i_j$ to $j$ for all $j \in [4]$, it follows that all the vertices of $I$ include consecutive elements, hence by removing them from $K(n,3)$ we still have a subgraph isomorphic to $S(n,3)$, so the chromatic number is $n-4$.

  \item Suppose that the maximum size of a union of three vertices of $I$ is $5$.
  First, observe that $I$ includes two vertices with intersection size $1$, $A_1 = \{i_1,i_2,i_3\}$ and $A_2 = \{i_1,i_4,i_5\}$ for distinct $i_1, \ldots, i_5 \in [n]$, because as above, if every two vertices of $I$ intersect at two elements then the total number of elements in the vertices of $I$ cannot exceed $4$.
  Since the maximum size of a union of three vertices of $I$ is $5$, using $|A_1 \cup A_2|=5$, the vertices of $I$ cannot include any element outside of $A_1 \cup A_2$. By $|I| \geq 3$, $I$ includes, without loss of generality, a vertex of the form $A_3 = \{i_1, i_2, i_4\}$. Other than that, it might include some of the other subsets of $\{i_1,\ldots,i_5\}$ that include $i_1$, i.e., $\{i_1, i_2, i_5\}$, $\{i_1, i_3, i_4\}$, and $\{i_1, i_3, i_5\}$.
  The vertices that do not see the color of $I$ must intersect each of $A_1$, $A_2$, and $A_3$, hence they must include $i_1$ or intersect each of $\{i_2,i_3\}$, $\{i_4,i_5\}$ and $\{i_2,i_4\}$. This implies that
  \[\overline{M}_I \leq \tbinom{n-1}{2}+3(n-4)+1.\]
  By applying a permutation of $[n]$ that maps $i_j$ to $j$ for all $j \in [4]$ and $i_5$ to $n$, it follows that all the vertices of $I$ include consecutive elements modulo $n$, hence by removing them from $K(n,3)$ we still have a subgraph isomorphic to $S(n,3)$, so the chromatic number is $n-4$.

  \item Now, suppose that the maximum size of a union of three vertices of $I$ is $6$, and that there is an element of $[n]$ that belongs to all but one of the vertices of $I$.
      As in the previous case, $I$ includes two vertices with intersection size $1$, $A_1 = \{i_1,i_2,i_3\}$ and $A_2 = \{i_1,i_4,i_5\}$ for distinct $i_1, \ldots, i_5 \in [n]$.
      Since the maximum size of a union of three vertices of $I$ is $6$, its vertices should include an additional element $i_6 \in [n]$ different from $i_1, \ldots, i_5 \in [n]$, so without loss of generality suppose that it includes the vertex $A_3 = \{i_1, i_2, i_6\}$.
      By assumption, it follows that all the other vertices of $I$ contain $\{i_1,i_2\}$.
      Since the vertices that do not see the color of $I$ intersect each of $A_1$, $A_2$, and $A_3$, their number satisfies
      \[\overline{M}_I \leq \tbinom{n-1}{2}+2(n-4)+1+2 = \tbinom{n-1}{2}+2n-5 \leq \tbinom{n-1}{2}+3(n-4)+1,\]
      where the last inequality clearly holds for all $n \geq 8$.
      By applying a permutation of $[n]$ that maps $i_j$ to $j$ for all $j \in [5]$, it follows, as before, that the graph obtained from $K(n,3)$ by removing the vertices of $I$ has chromatic umber $n-4$.
\end{itemize}

Finally, for Item~\ref{type:d}, let $I$ be a color class of type~\eqref{itm:type_d}. It satisfies $|I| \geq 3$, and we consider the following three possibilities.
\begin{itemize}
  \item Suppose that there exists a single element $i_1 \in [n]$ that belongs to all vertices of $I$, that the maximum size of a union of three vertices of $I$ is $6$, and that there is no element of $[n]$ that belongs to all but one of the vertices of $I$.
      As explained in the previous case, $I$ includes three vertices of the form $A_1 = \{i_1,i_2,i_3\}$, $A_2 = \{i_1,i_4,i_5\}$, and $A_3 = \{i_1, i_2, i_6\}$ for distinct $i_1, \ldots, i_6 \in [n]$.
      Since there is no element of $[n]$ that belongs to all but one of the vertices of $I$, we must have another vertex in $I$ that does not include $i_2$. This vertex includes $i_1$ and since the size of its union with $A_1 \cup A_2$ cannot exceed $6$, it must include an element from $\{i_3,i_4,i_5\}$. Without loss of generality, we consider the following three cases.
      \begin{itemize}
        \item If $I$ includes the vertex $A_4 = \{i_1, i_3, i_4\}$ then the number of vertices that do not see the color of $I$, and in particular intersect each of $A_1$, $A_2$, $A_3$, and $A_4$, satisfies \[\overline{M}_I \leq \tbinom{n-1}{2}+(n-3)+1+2= \tbinom{n-1}{2}+n.\]
        \item If $I$ includes a vertex of the form $A_4 = \{i_1, i_3, i_7\}$ for $i_7 \notin \{i_1, \ldots, i_5\}$ then we must have $i_6 = i_7$, because otherwise we would have $|A_2 \cup A_3 \cup A_4|=7$.
            The vertices that do not see the color of $I$ intersect each of $A_1$, $A_2$, $A_3$, and $A_4$, hence
            \[\overline{M}_I \leq \tbinom{n-1}{2}+4+2= \tbinom{n-1}{2}+6 \leq \tbinom{n-1}{2}+n.\]
        \item If $I$ includes a vertex of the form $A_4 = \{i_1, i_4, i_7\}$ for $i_7 \notin \{i_1, \ldots, i_5\}$, where $i_7$ might and might not be equal to $i_6$, then
            it follows that
            \[\overline{M}_I \leq \tbinom{n-1}{2}+(n-3)+1+2 \leq \tbinom{n-1}{2}+n.\]
      \end{itemize}

  \item Suppose that there exists a single element $i_1 \in [n]$ that belongs to all vertices of $I$, and that the maximum size of a union of three vertices of $I$ is $7$.
  Here, $I$ includes three vertices of the form $A_1 = \{i_1,i_2,i_3\}$, $A_2 = \{i_1,i_4,i_5\}$, and $A_3 = \{i_1,i_6,i_7\}$ for distinct $i_1, \ldots, i_7 \in [n]$.
  The vertices that do not see the color of $I$ must include $i_1$ or intersect each of $\{i_2,i_3\}$, $\{i_4,i_5\}$, and $\{i_6,i_7\}$. Hence, their number satisfies $\overline{M}_I \leq \binom{n-1}{2} + 8 \leq \binom{n-1}{2} + n$.

  \item Suppose that the vertices of $I$ do not share any common element. We consider the following two cases.
  \begin{itemize}
    \item If there is a pair of vertices in $I$ with intersection size $1$, denote such vertices by $A_1 = \{i_1, i_2, i_3\}$ and $A_2 = \{i_1, i_4, i_5\}$ for distinct $i_1, \ldots, i_5 \in [n]$. The collection $I$ must include a set that does not include $i_1$, which can be denoted, without loss of generality, by $A_3 = \{i_2, i_4, i_6\}$ for $i_6 \notin \{i_1,i_2,i_4\}$. Every vertex of $K(n,3)$ that does not see the color of $I$ must intersect each of the sets $A_1,A_2,A_3$.
        The number of vertices that do not see the color of $I$ and include $i_1$ is at most $3n-9$; the number of those that do not include $i_1$ but include $i_2$ is at most $2n-7$; the number of those that do not include $i_1$ and $i_2$ but include $i_4$ is at most $n-4$. And, finally, we should count the vertices that do not include $i_1$, $i_2$, $i_4$ and yet do not see the color of $I$. If $i_6 \notin \{i_3,i_5\}$ then there is at most one such vertex, and otherwise there are at most $n-5$ such vertices. This implies that $\overline{M}_I \leq (3n-9)+(2n-7)+(n-4)+\max(1,n-5) = 7n-25$, as required.
    \item If there is no pair of vertices in $I$ with intersection size $1$, then it includes two vertices of the form $A_1 = \{i_1, i_2, i_3\}$ and $A_2 = \{i_1, i_2, i_4\}$ for distinct $i_1, \ldots, i_4 \in [n]$, and since there is no element that is common to all the vertices of $I$, it has to include the vertices $A_3 = \{i_2, i_3, i_4\}$ and $A_4 = \{i_1, i_3, i_4\}$ as well. A vertex of $K(n,3)$ that does not see the color of $I$ must intersect each of these four vertices, and thus must include at least two of the elements of $\{i_1,i_2,i_3,i_4\}$. This implies that $\overline{M}_I \leq \binom{4}{2} \cdot (n-4) +4 = 6n-20 \leq 7n-25$.
  \end{itemize}
\end{itemize}
Therefore, for every color class $I$ of type~\eqref{itm:type_d}, it holds that $\overline{M}_I \leq \max \Big (7n-25, \binom{n-1}{2}+n \Big )$.
\end{proof}
The proof of the theorem is completed.
\end{proof}

\section{Hardness of Determining the Local Orthogonality Dimension}\label{sec:hardness}

In this section we consider, for any integer $k$ and a field $\Fset$, the computational problem of deciding whether an input graph $G$ satisfies $\od_l(G,\Fset) \leq k$.
We first observe that the problem is tractable for $k \in \{1,2\}$ and then prove our $\NP$-hardness result for $k \geq 3$, confirming Theorem~\ref{thm:hardness}.

\subsection{Local Orthogonality Dimension at Most Two}
Notice, first, that for a graph $G$ and a field $\Fset$, $\od_l(G,\Fset) = 1$ if and only if $G$ is edgeless.
The following simple lemma characterizes the graphs $G$ satisfying $\od_l(G,\Fset) \leq 2$.

\begin{lemma}\label{lemma:od<=2}
For every graph $G$ and a field $\Fset$, $\chi(G) \leq 2$ if and only if $\od_l(G,\Fset) \leq 2$.
\end{lemma}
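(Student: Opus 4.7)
The forward direction is immediate from the chain of inequalities established in the introduction: if $\chi(G)\le 2$, then by~\eqref{eq:od_l_vs_chi_l} we get $\od_l(G,\Fset)\le\chi_l(G)\le\chi(G)\le 2$.

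For the reverse direction, the plan is to show that any graph $G$ with $\od_l(G,\Fset)\le 2$ is bipartite, i.e.\ contains no odd cycle. Let $(u_v)_{v\in V}$ be an orthogonal representation of $G$ over $\Fset$ with locality at most $2$. First I would analyze the structure of a single closed neighborhood. Fix a vertex $v$ with at least one neighbor. The subspace $U_v$ contains $u_v$, which is nonzero (since $\langle u_v,u_v\rangle\neq 0$), so $\dim U_v\ge 1$. In fact $\dim U_v=2$: if $\dim U_v=1$ then a neighbor's vector $u_{v'}$ would be a nonzero scalar multiple $\alpha u_v$ of $u_v$, and the orthogonality condition $\langle u_v,u_{v'}\rangle=\alpha\langle u_v,u_v\rangle=0$ would force $\alpha=0$, contradicting $\langle u_{v'},u_{v'}\rangle\neq 0$.

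The key structural observation is then the following. Since every neighbor $v'$ of $v$ satisfies $\langle u_v,u_{v'}\rangle=0$, the vector $u_{v'}$ lies in the hyperplane $H_v=\{x\in\Fset^t\mid\langle x,u_v\rangle=0\}$. Because $u_v\notin H_v$ and $\dim U_v=2$, the intersection $U_v\cap H_v$ is a one-dimensional subspace of $U_v$. Consequently, the nonzero vectors $u_{v'}$ for $v'\in N(v)$ are all pairwise proportional (each being a nonzero element of this one-dimensional subspace).

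Now suppose, toward a contradiction, that $G$ contains an odd cycle $v_1,v_2,\ldots,v_{2k+1},v_1$. Applying the observation to each even-indexed vertex $v_{2i}$ (whose cycle neighbors are $v_{2i-1}$ and $v_{2i+1}$), we obtain a chain of proportionalities
\[
u_{v_1}\parallel u_{v_3}\parallel u_{v_5}\parallel\cdots\parallel u_{v_{2k+1}}.
\]
In particular, $u_{v_{2k+1}}=\alpha\cdot u_{v_1}$ for some nonzero scalar $\alpha\in\Fset$. But $v_1$ and $v_{2k+1}$ are adjacent in $G$, so $\langle u_{v_1},u_{v_{2k+1}}\rangle=\alpha\langle u_{v_1},u_{v_1}\rangle=0$, forcing $\alpha=0$ (using $\langle u_{v_1},u_{v_1}\rangle\neq 0$), a contradiction. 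Hence $G$ has no odd cycle, so $G$ is bipartite and $\chi(G)\le 2$, completing the proof. The only subtle point is verifying that $\dim U_v=2$ (rather than $1$) whenever $v$ has a neighbor, which I expect to be the sole place where the nondegeneracy hypothesis $\langle u_v,u_v\rangle\neq 0$ must be invoked explicitly — the rest of the argument is a direct propagation along the cycle.
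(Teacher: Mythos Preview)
Your proof is correct and follows essentially the same approach as the paper: both arguments show that in an orthogonal representation with locality at most $2$, vectors at distance two along a path are proportional, and then propagate this along an odd cycle to derive a contradiction from the adjacency of the endpoints. The only cosmetic difference is that the paper first reduces to the cycle $C_r$ itself (showing $\od_l(C_r,\Fset)>2$ for odd $r$), whereas you work directly inside $G$; your phrasing via the one-dimensional intersection $U_v\cap H_v$ is a clean way to express the same computation the paper does with coefficients.
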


\begin{proof}
Let $G$ be a graph and let $\Fset$ be a field.
If $\chi(G) \leq 2$ then it easily follows, by~\eqref{eq:od_l_vs_chi_l}, that $\od_l(G,\Fset) \leq \chi_l(G) \leq \chi(G) \leq 2$.
For the other direction, suppose that $\chi(G) > 2$, which implies that $G$ contains a cycle of odd length.
To complete the proof, it suffices to show that for every odd integer $r \geq 3$, it holds that $\od_l(C_r,\Fset) >2$, where $C_r$ is the cycle on $r$ vertices.

Suppose in contradiction that there exists an orthogonal representation of $C_r$ with locality at most $2$ assigning the vectors $u_1,\dots , u_r$ of $\Fset^t$ to its vertices along the cycle.
Put $W = \linspan(u_1,u_2)$.
Since $u_1$ and $u_2$ are not self-orthogonal and satisfy $\langle u_1, u_2 \rangle = 0$, it follows that $\dim(W) = 2$. By locality, the subspace $\linspan(u_1,u_2,u_3)$ spanned by the vectors of the closed neighborhood of the second vertex has dimension $2$, hence $u_3 \in W$.
Write $u_3 = \alpha \cdot u_1 + \beta \cdot u_2$ for some $\alpha,\beta \in \Fset$, and consider the inner product of both sides of this equality with $u_2$.
Since $\langle u_1, u_2 \rangle = \langle u_2, u_3 \rangle = 0$ and $\langle u_2, u_2 \rangle \neq 0$, it follows that $\beta = 0$, hence $u_1$ and $u_3$ are proportional, that is, there exists $\alpha \in \Fset$ for which $u_3 = \alpha \cdot u_1$. Similarly, by considering the closed neighborhood of the third vertex, it follows that $u_2$ and $u_4$ are proportional. Proceeding this way, it follows that all the vectors $u_i$ with odd $i$ are proportional and all the vectors $u_i$ with even $i$ are proportional.
In particular, since $r$ is odd, the vectors $u_1$ and $u_r$ are proportional, in contradiction to the fact that they are orthogonal but are not self-orthogonal.
\end{proof}

As is well known, the $2$-colorability problem can be decided in polynomial running-time. We thus derive from Lemma~\ref{lemma:od<=2} the following.

\begin{proposition}\label{prop:k=1,2_easy}
For every $k \in \{1,2\}$ and for every field $\Fset$, it is possible to decide whether an input graph $G$ satisfies $\xi_l(G,\mathbb{F}) \leq k$ in polynomial running-time.
\end{proposition}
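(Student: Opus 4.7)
The plan is to reduce both cases $k=1$ and $k=2$ to well-known polynomial-time decidable graph properties, and then invoke standard algorithms.

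For $k=1$, first I would observe that an orthogonal representation of locality $1$ over any field $\Fset$ exists if and only if $G$ has no edge. Indeed, if $(u_v)_{v \in V}$ is an orthogonal representation of $G$ with locality $1$, then for every edge $\{v,v'\}$, both $u_v$ and $u_{v'}$ must be nonzero vectors spanning the same one-dimensional subspace (by locality), so they are proportional; but they are also orthogonal and not self-orthogonal, a contradiction. Conversely, if $G$ is edgeless, then assigning the same nonzero vector $e_1 \in \Fset^1$ to every vertex yields such a representation. Edgelessness is checkable in polynomial time by scanning the edge list.

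For $k=2$, I would invoke Lemma~\ref{lemma:od<=2} directly: it shows that $\od_l(G,\Fset) \leq 2$ if and only if $\chi(G) \leq 2$, i.e., $G$ is bipartite. Bipartiteness can be decided in linear time via BFS/DFS on each connected component, which is a standard polynomial-time algorithm independent of the field $\Fset$.

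Combining the two observations, the proposition follows: on input $G$, for $k=1$ one checks whether $G$ has any edge, and for $k=2$ one runs a bipartiteness test. Both procedures run in polynomial time. I do not foresee any obstacle here, as the heavy lifting is already carried out in Lemma~\ref{lemma:od<=2}; the content of the proposition is just the algorithmic translation.
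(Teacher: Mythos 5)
Your proof is correct and follows the same route as the paper: the case $k=1$ is reduced to checking edgelessness (the paper makes the same observation just before the proposition), and the case $k=2$ is handled directly via Lemma~\ref{lemma:od<=2} together with the polynomial-time decidability of bipartiteness. The only difference is that you spell out the easy $k=1$ argument, which the paper leaves as an observation.
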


\subsection{Hardness Proof}

We turn to prove now that for every field $\Fset$, it is $\NP$-hard to decide whether an input graph $G$ satisfies $\od_l(G,\Fset) \leq 3$.
This is shown by a reduction from the standard satisfiability problem $\SAT$.
The reduction, presented in Section~\ref{sec:reduction} below, combines the reductions that were used by Peeters~\cite{Peeters96} and by Osang~\cite{Osang} to prove the hardness of determining, respectively, the orthogonality dimension and the local chromatic number.
After proving the completeness and soundness of the reduction, in Sections~\ref{sec:complete} and~\ref{sec:sound} respectively, we derive Theorem~\ref{thm:hardness} in Section~\ref{sec:all_together}.

\subsubsection{The Reduction}\label{sec:reduction}

Our reduction from $\SAT$ to the problem of deciding whether the local orthogonality dimension of a graph over any field $\Fset$ is at most $3$ is performed in two steps.

Let $\varphi$ be an instance of $\SAT$, i.e., a CNF formula, and denote its variables by $x_1, \ldots , x_k$.
In the first step of the reduction, we construct a graph $G$ as follows.
The graph $G$ consists of two adjacent vertices for every variable $x_i$, representing its two literals $x_i$ and $\overline{x_i}$.
These $2k$ vertices are connected to another vertex denoted by $w$.
We connect this vertex $w$ to two other vertices denoted by $t$ and $f$ and connect them by an edge (see Figure~\ref{fig:reduction}).
Then, for every clause of the formula $\varphi$, we construct a chain of {\em OR gadgets}. The OR gadget, given in Figure~\ref{fig:or}, consists of a triangle, where one of its vertices is adjacent to the vertex $w$ and is referred to as the {\em top vertex} of the gadget, and the other two are adjacent to some other two vertices that are referred to as the {\em base vertices} of the gadget.
The base vertices of the first OR gadget of a given clause are those of its first two literals.
The base vertices of the second OR gadget are the top vertex of the first OR gadget and the vertex of the third literal of the clause.
Similarly, the base vertices of the third OR gadget are the top vertex of the second OR gadget and the vertex of the fourth literal of the clause.
Proceeding this way, we get from a clause that consists of $r$ literals, a chain of $r-1$ OR gadgets, and we identify the top vertex of the last one with the vertex $t$.
Note that the top vertex of every OR gadget is adjacent to the vertex $w$.

Next, in the second step of the reduction, we construct a graph $G'$ from $G$ by adding a gadget, denoted by $H_{i,j}$, for each pair of vertices $i \in \{w,t,f\}$ and $j \notin \{w,t,f\}$ in $G$.
The graph $H_{i,j}$ is defined as a union of two triangles, whose vertices are $\{i, a_{i,j}, b_{i,j}\}$ and $\{j,d_{i,j}, c_{i,j}\}$, with a matching connecting the vertices $i, a_{i,j},b_{i,j}$ to the vertices $d_{i,j}, j, c_{i,j}$ respectively (see Figure~\ref{fig:Hij}). Note that the two vertices $i,j$ of $G$ are identified with the two vertices $i,j$ of their gadget $H_{i,j}$ in $G'$. This completes the description of the output $G'$ of the reduction, which can clearly be constructed in polynomial running-time.

\begin{figure}[!htb]
\minipage{0.27\textwidth}
\centering
  \includegraphics[width=1.00\textwidth]{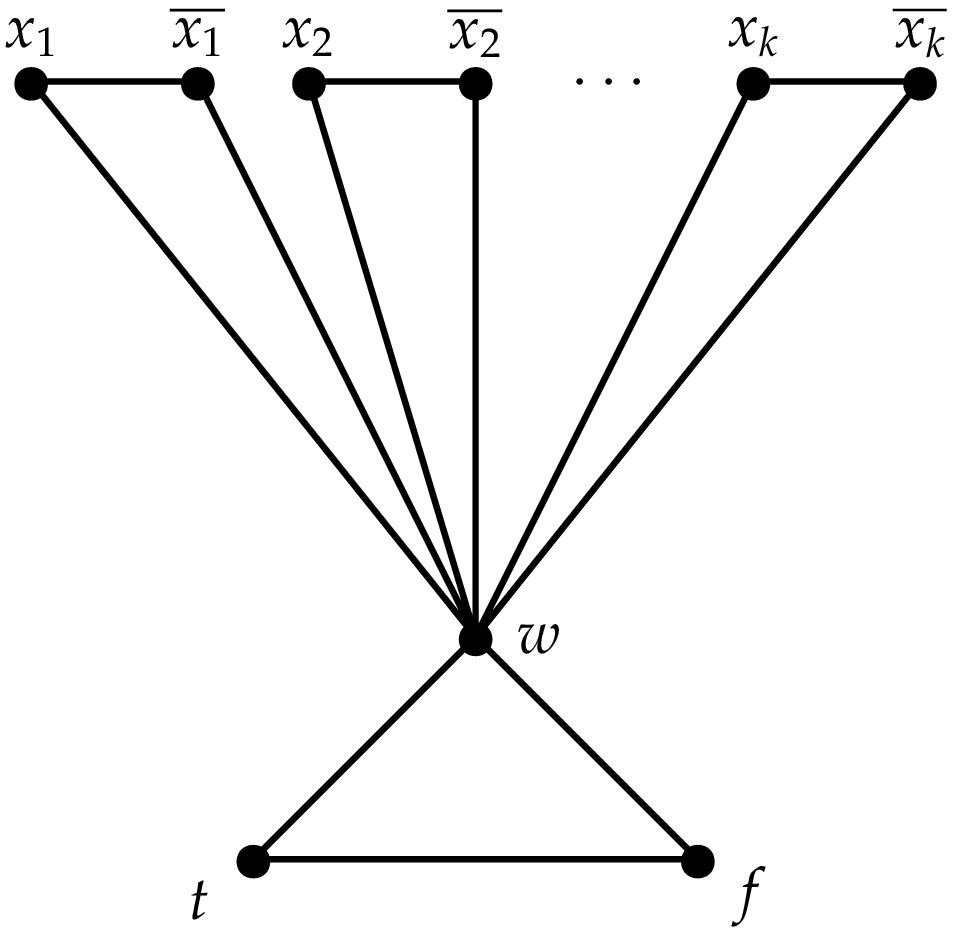}
    \caption{Basic gadget}
	\label{fig:reduction}
\endminipage\hfill
\minipage{0.27\textwidth}
\centering
  \includegraphics[width=0.85\textwidth]{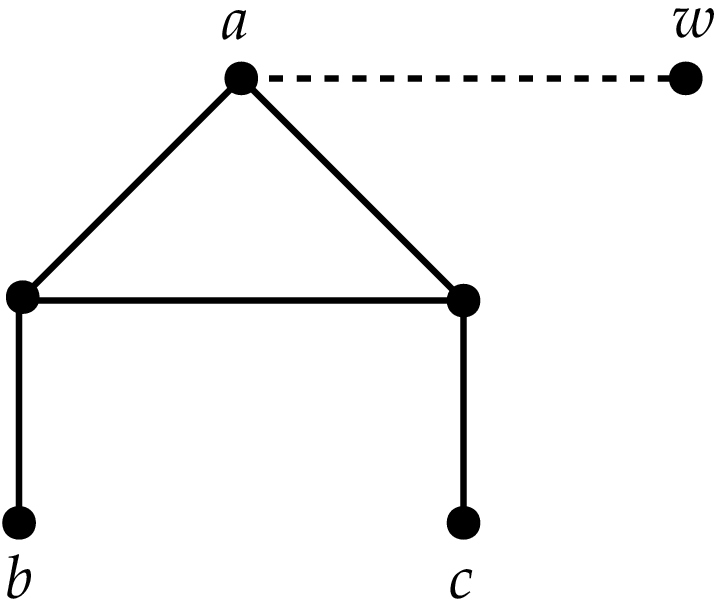}
    \caption{OR gadget \newline The top vertex is $a$; the base vertices are $b$ and $c$.}
	\label{fig:or}
\endminipage\hfill
\minipage{0.27\textwidth}
\centering
    \includegraphics[width=0.95\textwidth]{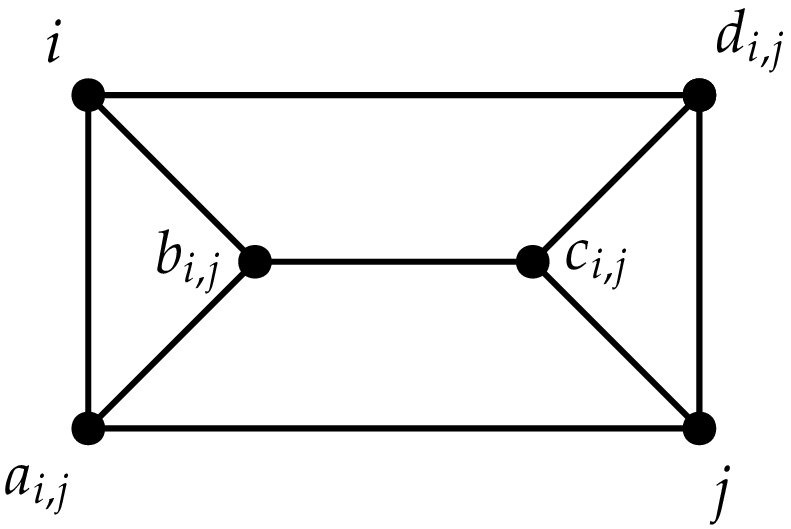}
    \caption{$H_{i,j}$ gadget}
	\label{fig:Hij}
\endminipage
\end{figure}

\subsubsection{Completeness}\label{sec:complete}
The completeness of the reduction is proved via the following two lemmas.

\begin{lemma}\label{lemma:complete1}
If $\varphi$ is satisfiable then $\chi(G) \leq 3$.
\end{lemma}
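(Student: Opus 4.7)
The plan is to construct an explicit proper $3$-coloring $c : V(G) \to \{1,2,3\}$ directly from a satisfying assignment $\sigma$ of $\varphi$, interpreting color $1$ as \emph{True}, color $2$ as \emph{False}, and color $3$ as a neutral color reserved for $w$ and for one vertex in each OR-triangle. First I would fix the anchor vertices by setting $c(w)=3$, $c(t)=1$, and $c(f)=2$, which properly colors the triangle on $\{w,t,f\}$. Then, for each variable $x_i$, I would color the literal vertex whose value is true under $\sigma$ with $1$ and the opposite literal with $2$. This handles the edge between the two literals of each variable and ensures that every literal vertex receives a color in $\{1,2\}$, differing from $c(w)=3$.

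The core of the argument is to color the OR-gadget chain of each clause $C=\ell_1\vee\cdots\vee\ell_r$ inductively from bottom to top. I would define the color of the top vertex $a_i^C$ of the $i$-th OR gadget, for $1\le i\le r-1$, to be $1$ if at least one of $\ell_1,\ldots,\ell_{i+1}$ is satisfied by $\sigma$, and $2$ otherwise. Since $\sigma$ satisfies $C$, this forces $c(a_{r-1}^C)=1$, which is compatible with the identification of $a_{r-1}^C$ with $t$. Every top receives a color in $\{1,2\}$, so each edge from a top to $w$ is properly colored. It then remains to color the two non-top triangle vertices of each OR gadget using the two colors in $\{1,2,3\}\setminus\{c(a_i^C)\}$, in some order, so that each differs from the color of the external base it is adjacent to.

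The key observation is that, by construction, the Boolean value represented by $c(a_i^C)$ is precisely the OR of the Boolean values of the two bases, whose colors both lie in $\{1,2\}$. A short case analysis over the four possible color pairs of the bases shows that the two colors in $\{1,2,3\}\setminus\{c(a_i^C)\}$, which always consist of color $3$ together with the color opposite to $c(a_i^C)$, can always be distributed between the two non-top vertices in a way that respects both base-edge constraints. I expect this case analysis to be the only place requiring careful bookkeeping; once it is in place, the properness of all remaining edges --- those among $\{w,t,f\}$, the literal-to-$w$ edges, the three triangle edges of each OR gadget, and the edges from non-top triangle vertices to their bases --- is immediate from the choices above, yielding $\chi(G)\le 3$.
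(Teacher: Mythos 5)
Your proposal is correct and takes essentially the same approach as the paper: fix $c(w),c(t),c(f)$, color literal vertices by their truth value under the satisfying assignment, and then propagate colors inductively up each OR-gadget chain so that the top vertex's color records the OR of the truth values seen so far, verifying via a short case analysis on the base colors that the two non-top triangle vertices can always be completed. The only cosmetic difference is that you use colors $\{1,2,3\}$ where the paper uses $\{w,t,f\}$, and you spell out the four-case check on the OR gadget that the paper compresses into an "observe that" statement.
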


\begin{proof}
If $\varphi$ is satisfiable then there exists an assignment to its variables that satisfies all of its clauses, that is, every clause has a literal that is assigned `true'.
The constructed graph $G$ can be properly colored using the colors $\{w,t,f\}$ as follows.
We first color the vertices $w,t,f$ by the colors $w,t,f$ respectively. Next, we color every vertex representing a literal ($x_i$ or $\overline{x_i}$, $i \in [k]$) by $t$ or $f$ according to whether its truth value is `true' or `false' respectively. Now, observe that the OR gadget satisfies that given arbitrary colors from $\{t,f\}$ for its base vertices, one can extend the coloring properly so that the top vertex is colored either $t$ or $f$. Moreover, if at least one of the base vertices is colored $t$, then it is possible to properly extend the coloring so that the top vertex is colored $t$ as well. By construction, since the given assignment satisfies the formula $\varphi$, the coloring of the graph can be properly extended, so that the top vertex of the last OR gadget of every clause, which is identified with the vertex $t$, is colored $t$. This implies that $\chi(G) \leq 3$, as required.
\end{proof}

\begin{lemma}\label{lemma:complete2}
If $\chi(G) \leq 3$ then $\chi(G') \leq 3$.
\end{lemma}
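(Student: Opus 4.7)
The plan is to start from a proper $3$-coloring $c$ of $G$ guaranteed by the hypothesis and extend it to a proper $3$-coloring of $G'$ by independently assigning colors to the four new vertices $a_{i,j}, b_{i,j}, c_{i,j}, d_{i,j}$ of each gadget $H_{i,j}$. The key observation that makes this possible is that the new vertices introduced by a gadget $H_{i,j}$ are, in $G'$, adjacent only to vertices within that same gadget (to $i$, $j$, and the other three gadget vertices), so the extensions across distinct gadgets do not interact and can be decided locally.

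Fix a gadget $H_{i,j}$ and denote $\alpha = c(i)$, $\alpha' = c(j)$. Within the gadget, the coloring must respect the triangle $\{i, a_{i,j}, b_{i,j}\}$, forcing $\{c(a_{i,j}), c(b_{i,j})\} = \{1,2,3\} \setminus \{\alpha\}$, and the triangle $\{j, c_{i,j}, d_{i,j}\}$, forcing $\{c(c_{i,j}), c(d_{i,j})\} = \{1,2,3\} \setminus \{\alpha'\}$. The remaining three constraints come from the matching edges: $c(d_{i,j}) \neq \alpha$, $c(a_{i,j}) \neq \alpha'$, and $c(b_{i,j}) \neq c(c_{i,j})$. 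I would then split into two cases.

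In the case $\alpha = \alpha'$, let $\{\beta,\gamma\} = \{1,2,3\} \setminus \{\alpha\}$; the matching constraints involving $i,j$ are automatic, and one may set $(a_{i,j}, b_{i,j}, c_{i,j}, d_{i,j}) = (\beta, \gamma, \beta, \gamma)$ so that $c(b_{i,j}) \neq c(c_{i,j})$. In the case $\alpha \neq \alpha'$, let $\gamma$ be the unique remaining color; the matching constraint $c(d_{i,j}) \neq \alpha$ forces $c(d_{i,j}) = \gamma$ and $c(c_{i,j}) = \alpha$, while $c(a_{i,j}) \neq \alpha'$ forces $c(a_{i,j}) = \gamma$ and $c(b_{i,j}) = \alpha'$; a direct check gives $c(b_{i,j}) = \alpha' \neq \alpha = c(c_{i,j})$, so all gadget constraints are satisfied.

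Applying this extension simultaneously to every gadget $H_{i,j}$ yields a map $V(G') \to \{1,2,3\}$ whose restriction to $V(G)$ equals the original proper $3$-coloring $c$ and whose restriction to each gadget is proper; since edges of $G'$ either lie entirely in $G$ or entirely in a single gadget $H_{i,j}$ (with at most one endpoint in $\{i,j\} \subseteq V(G)$ that was already accounted for), the resulting coloring is proper, giving $\chi(G') \leq 3$. There is essentially no obstacle here beyond the small case analysis above; the substance of the argument is the observation that gadgets share no new vertices and hence can be handled independently once $c(i)$ and $c(j)$ are given.
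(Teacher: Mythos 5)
Your proof is correct and takes essentially the same approach as the paper: extend a given proper $3$-coloring of $G$ to $G'$ by coloring each gadget $H_{i,j}$ independently, using one of two local patterns depending on whether $c(i) = c(j)$. The paper simply exhibits the two colorings (as color classes $\{\{i,j\},\{a_{i,j},c_{i,j}\},\{b_{i,j},d_{i,j}\}\}$ and $\{\{i,c_{i,j}\},\{a_{i,j},d_{i,j}\},\{j,b_{i,j}\}\}$), which match the two cases of your case analysis.
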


\begin{proof}
Suppose that $\chi(G) \leq 3$. Recall that the graph $G'$ is obtained from $G$ by adding the gadget $H_{i,j}$ for certain pairs of vertices $i,j$ of $G$. Hence, to prove that $\chi(G') \leq 3$, it suffices to show that the gadget $H_{i,j}$ has a proper $3$-coloring where the vertices $i$ and $j$ share the same color and that it has a proper $3$-coloring where they do not. The former follows from the color classes $\{ \{i,j\} ,\{a_{i,j},c_{i,j}\},\{b_{i,j},d_{i,j}\} \}$ and the latter from the color classes $\{ \{i,c_{i,j}\},\{a_{i,j},d_{i,j}\}, \{j,b_{i,j}\} \}$, so we are done.
\end{proof}

The following corollary summarizes the completeness of the reduction.
\begin{corollary}\label{cor:complete}
For every field $\Fset$, if $\varphi$ is satisfiable then $\od_l(G',\Fset) \leq 3$.
\end{corollary}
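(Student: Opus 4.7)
The plan is to chain together Lemmas~\ref{lemma:complete1} and~\ref{lemma:complete2} to obtain a bound on the ordinary chromatic number of $G'$, and then invoke the general inequality $\od_l(G',\Fset) \leq \chi_l(G') \leq \chi(G')$, where the first inequality is~\eqref{eq:od_l_vs_chi_l} and the second is immediate from the definition of the local chromatic number. Specifically, assuming $\varphi$ is satisfiable, Lemma~\ref{lemma:complete1} yields $\chi(G) \leq 3$, and then Lemma~\ref{lemma:complete2} propagates this bound to $\chi(G') \leq 3$. Combining these gives
\[
\od_l(G',\Fset) \;\leq\; \chi_l(G') \;\leq\; \chi(G') \;\leq\; 3,
\]
valid over every field $\Fset$, which is exactly the claim of the corollary. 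There is no real obstacle here since all the substantive work has been carried out in the two preceding lemmas; the corollary is merely a packaging statement that expresses the completeness side of the reduction in the language of local orthogonality dimension rather than chromatic number. The only thing to note is that the bound $\od_l \leq \chi_l$ is field-independent (the standard basis vectors $e_1, e_2, e_3 \in \Fset^3$ work over any field), so the conclusion indeed holds uniformly over all fields $\Fset$.
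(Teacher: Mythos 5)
Your proof is correct and follows exactly the paper's argument: chain Lemmas~\ref{lemma:complete1} and~\ref{lemma:complete2} to get $\chi(G') \leq 3$, then apply $\od_l(G',\Fset) \leq \chi_l(G') \leq \chi(G')$ via~\eqref{eq:od_l_vs_chi_l}. Nothing to add.
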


\begin{proof}
By Lemmas~\ref{lemma:complete1} and~\ref{lemma:complete2}, it follows that if $\varphi$ is satisfiable then $\chi(G') \leq 3$.
In particular, for every field $\Fset$, it holds by~\eqref{eq:od_l_vs_chi_l} that $\od_l(G',\Fset) \leq \chi_l(G') \leq \chi(G') \leq 3$, as required.
\end{proof}

\subsubsection{Soundness}\label{sec:sound}

The following lemma provides a crucial property of the gadget graph $H_{i,j}$.

\begin{lemma}\label{lemma:Hij}
Let $H = H_{i,j}=(V,E)$ be the graph given in Figure~\ref{fig:Hij}, and let $\Fset$ be a field.
For an integer $t$, let $U$ be a subspace of $\Fset^t$ of dimension $\dim(U)=3$, and let $(u_v)_{v \in V}$ be an orthogonal representation of $H$ over $\Fset$ such that $u_v \in U$ for every $v \in V$.
Then, the vectors $u_i$ and $u_j$ of the vertices $i$ and $j$ of $H$ satisfy either $\langle u_i,u_j \rangle = 0$ or $u_i = \alpha \cdot u_j$ for some $\alpha \in \Fset$.
\end{lemma}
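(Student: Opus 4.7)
The plan is to exploit the fact that $U$ has dimension exactly $3$, so each of the two triangles in $H_{i,j}$ forces its three vectors to be a basis of $U$. Specifically, since $u_a, u_b$ are mutually orthogonal and non-self-orthogonal they are linearly independent, and if $u_i$ were in $\linspan(u_a,u_b)$, writing $u_i = \alpha u_a + \beta u_b$ and taking the inner product with $u_i$ would give $\langle u_i,u_i\rangle = 0$, a contradiction. Hence $(u_a,u_b,u_i)$ is a basis of $U$, and symmetrically $(u_c,u_d,u_j)$ is a basis of $U$.

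Working in the first basis, I would then use the three remaining orthogonality relations to locate $u_c, u_d, u_j$. From $u_d \perp u_i$ (matching edge) we get that the $u_i$-coordinate of $u_d$ vanishes, so $u_d \in \linspan(u_a,u_b)$. From $u_c \perp u_b$ (matching edge) we get $u_c \in \linspan(u_a,u_i)$. From $u_j \perp u_a$ (matching edge) we get $u_j \in \linspan(u_b,u_i)$. Write $u_d = \alpha u_a + \beta u_b$, $u_c = \alpha' u_a + \gamma' u_i$ and $u_j = \beta'' u_b + \gamma'' u_i$, keeping in mind that the coefficients $\alpha', \gamma'$ cannot both vanish and $\beta'', \gamma''$ cannot both vanish (since the vectors are nonzero).

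The key calculation is the remaining orthogonality of the triangle $\{j,c,d\}$: since $u_a \perp u_b, u_a \perp u_i, u_b \perp u_i$ all hold, the condition $\langle u_d, u_c\rangle = 0$ collapses to $\alpha \alpha' \langle u_a, u_a\rangle = 0$, forcing $\alpha = 0$ or $\alpha' = 0$. In the first case $u_d = \beta u_b$ is proportional to $u_b$; then $\langle u_d, u_j\rangle = 0$ reduces to $\beta\beta''\langle u_b,u_b\rangle = 0$, forcing $\beta'' = 0$, whence $u_j = \gamma'' u_i$ is proportional to $u_i$ and the second alternative of the lemma holds. In the second case $u_c = \gamma' u_i$ is proportional to $u_i$; then $\langle u_c, u_j\rangle = 0$ reduces to $\gamma'\gamma''\langle u_i,u_i\rangle = 0$, so $\gamma'' = 0$ and $u_j = \beta'' u_b$. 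Since $u_b \perp u_i$ this yields $\langle u_i,u_j\rangle = 0$, the first alternative of the lemma.

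I do not anticipate a real obstacle here: once one realizes that the $3$-dimensional constraint forces each triangle to be a basis of $U$, the argument is pure bookkeeping in that basis, and the case split comes from a single nontrivial inner-product identity. The main care is to avoid assuming non-degeneracy of the form restricted to $U$ (which may fail over a general field); the argument above only uses that the individual vectors are non-self-orthogonal and that the three basis pairs $(u_a,u_b), (u_a,u_i), (u_b,u_i)$ are orthogonal, which is exactly what the first triangle guarantees.
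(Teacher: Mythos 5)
Your proof is correct and takes essentially the same approach as the paper: both fix the basis $(u_i, u_{a_{i,j}}, u_{b_{i,j}})$ of $U$ forced by the first triangle, expand $u_j, u_{c_{i,j}}, u_{d_{i,j}}$ in that basis with one coordinate killed by each matching edge, and then exploit the three remaining orthogonality constraints from the second triangle. The only difference is organizational: you pivot on the $c_{i,j}$-$d_{i,j}$ edge to get a clean two-case dichotomy that resolves constructively in each branch, whereas the paper assumes $\langle u_i,u_j\rangle\neq 0$ and $\alpha_2\neq 0$ and derives a contradiction from $u_{d_{i,j}}$ becoming self-orthogonal; your version is a slight streamlining, not a different route.
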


\begin{proof}
Consider the orthogonal representation $(u_v)_{v \in V}$ of $H$ over $\Fset$ given in the lemma.
The vectors of the vertices $i, a_{i,j}, b_{i,j}$ are pairwise orthogonal over $\Fset$, and since they are not self-orthogonal, it follows that they are linearly independent over $\Fset$. By the assumption $\dim(U)=3$, this implies that $U = \linspan(u_i, u_{a_{i,j}}, u_{b_{i,j}})$.
It thus follows that they span each of the vectors of the other vertices $j, c_{i,j}, d_{i,j}$ of $H$.
Write
\[u_j = \alpha_1 \cdot u_i + \alpha_2 \cdot u_{b_{i,j}} + \alpha_3 \cdot u_{a_{i,j}}\]
for some coefficients $\alpha_1, \alpha_2, \alpha_3 \in \Fset$.
Since $a_{i,j}$ and $j$ are adjacent in $H$, we have $\langle u_{a_{i,j}}, u_j \rangle = 0$, and yet $\langle u_{a_{i,j}}, u_{a_{i,j}} \rangle \neq 0$. By considering the inner product of both sides of the above equality with $u_{a_{i,j}}$, it follows that $\alpha_3 = 0$, hence
\begin{eqnarray}\label{eq:u_j}
u_j = \alpha_1 \cdot u_i + \alpha_2 \cdot u_{b_{i,j}}.
\end{eqnarray}
Similarly, for some coefficients $\beta_1, \beta_2 \in \Fset$ we have
\begin{eqnarray}\label{eq:u_c}
u_{c_{i,j}} = \beta_1 \cdot u_i + \beta_2 \cdot u_{a_{i,j}},
\end{eqnarray}
and for some coefficients $\gamma_1, \gamma_2 \in \Fset$ we have
\begin{eqnarray}\label{eq:u_d}
u_{d_{i,j}} = \gamma_1 \cdot u_{a_{i,j}} + \gamma_2 \cdot u_{b_{i,j}}.
\end{eqnarray}

Now, since $j$ is adjacent in $H$ to both $a_{i,j}$ and $c_{i,j}$, it follows that $\langle u_j, u_{a_{i,j}} \rangle = \langle u_j, u_{c_{i,j}} \rangle = 0$. By~\eqref{eq:u_c}, it follows that $\langle u_j, \beta_1 \cdot u_i \rangle  = 0$. If $\langle u_i, u_j \rangle = 0$, then we are done. Otherwise, we must have $\beta_1 = 0$, hence $u_{c_{i,j}} = \beta_2 \cdot u_{a_{i,j}}$ where $\beta_2 \neq 0$.

Similarly, since $d_{i,j}$ is adjacent in $H$ to both $i$ and $j$, it follows that $\langle u_{d_{i,j}}, u_i \rangle = \langle u_{d_{i,j}}, u_j \rangle = 0$.
By~\eqref{eq:u_j}, it follows that $\langle u_{d_{i,j}}, \alpha_2 \cdot u_{b_{i,j}} \rangle  = 0$. If $\alpha_2 = 0$, then $u_j = \alpha_1 \cdot u_i$ and we are done. Otherwise, we must have $\langle u_{d_{i,j}}, u_{b_{i,j}} \rangle  = 0$.

We are left with the case where $u_{c_{i,j}} = \beta_2 \cdot u_{a_{i,j}}$ and $\langle u_{d_{i,j}}, u_{b_{i,j}} \rangle  = 0$.
Since $d_{i,j}$ is adjacent in $H$ to $c_{i,j}$, it follows that $\langle u_{d_{i,j}}, u_{c_{i,j}} \rangle  = 0$, and thus $\langle u_{d_{i,j}}, u_{a_{i,j}} \rangle  = 0$.
Using~\eqref{eq:u_d}, this contradicts the fact that $u_{d_{i,j}}$ is not self-orthogonal, completing the proof.
\end{proof}

We are ready to deduce the following.
\begin{lemma}\label{lemma:sound1}
For every field $\Fset$, if $\od_l(G',\Fset) \leq 3$ then $\chi(G) \leq 3$.
\end{lemma}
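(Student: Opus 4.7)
The plan is to convert a locality-$3$ orthogonal representation of $G'$ into a proper $3$-coloring of $G$ by reading off which of the three pairwise orthogonal vectors $u_w, u_t, u_f$ each vertex's vector is proportional to. Fix such a representation $(u_v)_{v \in V(G')}$ over $\Fset$. Since the reduction places $w, t, f$ as a triangle in $G'$, the vectors $u_w, u_t, u_f$ are pairwise orthogonal and non-self-orthogonal, hence linearly independent; let $U = \linspan(u_w, u_t, u_f)$, a $3$-dimensional subspace. By locality at $w$, the span of the closed neighborhood of $w$ has dimension at most $3$ and already contains $U$, so it equals $U$; the analogous statement holds for $t$ and for $f$.

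Next I would show that every vector of the representation lies in $U$. For any $v \in V(G) \setminus \{w, t, f\}$, the gadget $H_{w,v}$ is present in $G'$; the triangle $\{w, a_{w,v}, b_{w,v}\}$ produces three pairwise orthogonal vectors, all of which lie in $U$ since $a_{w,v}, b_{w,v} \in N(w)$, and they therefore span $U$. The matching edge between $a_{w,v}$ and $v$ places $v$ in the closed neighborhood of $a_{w,v}$, so locality at $a_{w,v}$ forces $u_v \in U$. The remaining auxiliary vertices $a_{i,j}, b_{i,j}, d_{i,j}$ are themselves adjacent to $i \in \{w, t, f\}$ and hence already have vectors in $U$, while $c_{i,j}$ lies in the closed neighborhood of $b_{i,j}$, whose triangle $\{i, a_{i,j}, b_{i,j}\}$ again spans $U$, so $u_{c_{i,j}} \in U$ as well.

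With every vector of $G'$ now inside the $3$-dimensional $U$, I would invoke Lemma~\ref{lemma:Hij} on each gadget $H_{i,v}$ with $i \in \{w, t, f\}$ and $v \in V(G) \setminus \{w, t, f\}$, concluding that $u_v$ is either orthogonal or proportional to each of $u_w, u_t, u_f$. Expanding $u_v = \alpha_w u_w + \alpha_t u_t + \alpha_f u_f$ in the pairwise orthogonal basis of $U$, orthogonality to $u_i$ translates into $\alpha_i = 0$. Since $u_v$ is non-self-orthogonal and hence nonzero, the three coefficients cannot all vanish, and since $u_w, u_t, u_f$ are pairwise non-proportional (they are nonzero and pairwise orthogonal), $u_v$ can be proportional to at most one of them; so $u_v$ is proportional to exactly one of $u_w, u_t, u_f$, and I color $v$ by that unique index, while coloring $w, t, f$ by $w, t, f$ themselves.

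Finally I would verify properness of the coloring on $G$: every edge of $G$ is also an edge of $G'$, so its endpoints carry orthogonal vectors, and two proportional nonzero vectors can be orthogonal only if one of them is self-orthogonal, contradicting the definition of an orthogonal representation. The main step requiring care is the uniform containment of all vectors in $U$; the reduction is engineered precisely so that attaching $H_{w,v}$ to every $v \in V(G) \setminus \{w, t, f\}$ (together with the triangles inside each $H_{i,j}$) propagates $U$-membership throughout $G'$ and makes Lemma~\ref{lemma:Hij} applicable to every gadget $H_{i,v}$ used in step three.
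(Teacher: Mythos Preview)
Your proof is correct and follows the same overall architecture as the paper's: show that every vector of the representation lies in the $3$-dimensional space $U=\linspan(u_w,u_t,u_f)$, invoke Lemma~\ref{lemma:Hij} on each gadget $H_{i,v}$, and read off a proper $3$-coloring of $G$.

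The one notable difference is in how you get every vector into $U$. The paper propagates the ``spans $U$'' property through the internal structure of $G$ (literal vertices, then OR-gadget triangles, then chains of OR gadgets), and only afterwards handles the $H_{i,j}$ vertices. You bypass that propagation entirely: since the reduction attaches $H_{w,v}$ to \emph{every} $v\in V(G)\setminus\{w,t,f\}$, the triangle $\{w,a_{w,v},b_{w,v}\}$ already spans $U$, and locality at $a_{w,v}$ immediately forces $u_v\in U$. This is a genuine simplification of that step; it exploits the second stage of the reduction to avoid analyzing the OR-gadget chains at all. The paper's route, on the other hand, makes explicit that the OR-gadget structure by itself keeps vectors inside $U$, which is conceptually closer to how Peeters' original argument runs, but is not needed for the lemma as stated.
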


\begin{proof}
Let $\Fset$ be a field, and suppose that $\od_l(G',\Fset) \leq 3$. Then, there exists an orthogonal representation $(u_v)_{v \in V(G')}$ of $G'$ over $\Fset$ with locality at most $3$. Since the vertices $w,t,f$ form a triangle in $G'$, their vectors are pairwise orthogonal over $\Fset$.
Since they are not self-orthogonal, this implies that the subspace $U = \linspan(u_w, u_t, u_f)$ satisfies $\dim(U) = 3$.

We claim that all the vectors of the given orthogonal representation lie in $U$.
Before proving it, we explain why this yields that $\chi(G) \leq 3$.
Indeed, for every two vertices $i \in \{w,t,f\}$ and $j \notin \{w,t,f\}$ of $G$, the vectors of the corresponding gadget $H_{i,j}$ in $G'$ lie in the subspace $U$, hence by Lemma~\ref{lemma:Hij}, their vectors $u_i$ and $u_j$ satisfy either $\langle u_i,u_j \rangle = 0$ or $u_i = \alpha \cdot u_j$ for some $\alpha \in \Fset$.
Hence, for every vertex $v$ of $G$, the vector $u_v$ is a scalar multiple of precisely one of the pairwise orthogonal vectors $u_w$, $u_t$, $u_f$. This gives us a partition of the vertex set of $G$ into three color classes of a proper coloring of $G$, hence $\chi(G) \leq 3$.

It remains to show that all the vectors of the given orthogonal representation of $G'$ lie in $U$.
To this end, we say that a vertex of $G'$ {\em spans $U$} if the linear span of the vectors of its closed neighborhood contains $U$.
Observe that if a vertex spans $U$ then all the vectors of its neighbors lie in $U$, as otherwise we would get a contradiction to the locality $3$ of the orthogonal representation. The vertices $w,t,f$ clearly span $U$. This implies that for each $i \in [k]$, the vectors $u_{x_i}$ and $u_{\overline{x_i}}$ lie in $U$. Moreover, the literal vertices span $U$, because they belong to a triangle whose vertices are assigned vectors of $U$. Observe further that if the two base vertices of an OR gadget span $U$ then so does the top vertex. Indeed, in this case the top vertex belongs to a triangle whose vertices are adjacent to vertices that span $U$, hence each vertex of this triangle spans $U$. By the construction of $G$, repeatedly applying this property of the OR gadget implies that all the vertices of $G$ span $U$.
Finally, observe that if the vertices $i$ and $j$ of a gadget $H_{i,j}$ in $G'$ span $U$ then so do the rest of the vertices of the gadget.
Hence, all the vertices of $G'$ span $U$, and since every vertex of $G'$ has a neighbor, it follows that all the vectors of the orthogonal representation lie in $U$, as desired.
\end{proof}

To complete the soundness proof, we need the following lemma.
\begin{lemma}\label{lemma:sound2}
If $\chi(G) \leq 3$ then $\varphi$ is satisfiable.
\end{lemma}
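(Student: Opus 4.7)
The plan is to start from a proper $3$-coloring $c : V(G) \to \{1,2,3\}$ of $G$ and read off a satisfying assignment from the colors of the literal vertices. Since $\{w,t,f\}$ forms a triangle in $G$, the three colors are all used on it, so by renaming we may assume $c(w)=w$, $c(t)=t$, $c(f)=f$. For every variable $x_i$, the two literal vertices $x_i$ and $\overline{x_i}$ are both adjacent to $w$ and to each other, so they receive the two distinct colors in $\{t,f\}$. I will define the truth assignment by setting $x_i$ to \textbf{true} iff the vertex labeled $x_i$ is colored $t$, so that $\overline{x_i}$ is correctly colored $t$ precisely when $x_i$ is \textbf{false}.

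Next I will establish the key combinatorial property of the OR gadget: if both base vertices $b,c$ of an OR gadget are colored $f$, then the top vertex $a$ is also colored $f$. Recall that $a$ is adjacent to $w$, so $c(a) \in \{t,f\}$, and that $a$ sits in a triangle $\{a,p,q\}$ where $p$ is adjacent to $b$ and $q$ is adjacent to $c$. Since the triangle uses all three colors and $c(a)\neq w$, one of $p,q$ is colored $w$ and the other receives the remaining color among $\{t,f\}\setminus\{c(a)\}$. But $c(p)\neq c(b)=f$ and $c(q)\neq c(c)=f$, so neither $p$ nor $q$ is colored $f$; the only way the triangle can be properly $3$-colored is $c(a)=f$, $\{c(p),c(q)\}=\{w,t\}$. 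This gives the desired forcing statement.

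I will then apply this property inductively along the chain of OR gadgets built for each clause $C = \ell_1 \vee \ell_2 \vee \cdots \vee \ell_r$. Assume for contradiction that every literal of $C$ evaluates to \textbf{false} under the assignment; by construction this means every literal vertex $\ell_j$ is colored $f$. The first OR gadget has base vertices $\ell_1, \ell_2$, so by the forcing property its top vertex $a_1$ is colored $f$. Since $a_1$ and $\ell_3$ are the base vertices of the second OR gadget, its top $a_2$ is colored $f$ as well, and so on. The top vertex of the final OR gadget in the chain is identified with the vertex $t$, yielding $c(t)=f$, which contradicts $c(t)=t$. Hence at least one literal of $C$ evaluates to \textbf{true}, and since this holds for every clause, the assignment satisfies $\varphi$.

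The only real obstacle is the forcing lemma for a single OR gadget; everything else is bookkeeping. The argument there is just case analysis on the triangle inside the gadget together with the adjacency of the top vertex to $w$, exactly as sketched above, so the overall soundness proof should be short.
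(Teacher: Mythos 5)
Your proof is correct and follows essentially the same route as the paper: normalize the coloring on the triangle $\{w,t,f\}$, read off a truth assignment from the colors of the literal vertices, and use the forcing property of the OR gadget (both base vertices colored $f$ implies the top vertex is colored $f$) together with a chain argument to derive a contradiction at the vertex $t$. The only difference is that you spell out the case analysis proving the OR gadget's forcing property, whereas the paper states it as an observation without proof; your argument for it (the top vertex cannot be $w$, and neither triangle-neighbor can be $f$ since each is adjacent to a base vertex colored $f$, so the top vertex must be $f$) is correct.
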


\begin{proof}
Suppose that $\chi(G) \leq 3$, that is, there exists a proper coloring of $G$ with three colors, and assume, without loss of generality, that the vertices $w, t,f$ are colored by the colors $w, t, f$ respectively. By construction, every literal vertex is colored either $t$ or $f$, where its color is different from the color of its negation. This allows us to consider the assignment that assigns to every literal whose vertex is colored $t$ the value `true' and to every literal whose vertex is colored $f$ the value `false'.
We claim that this assignment satisfies $\varphi$.
To see this, suppose by contradiction that all the literals of some clause of $\varphi$ are assigned `false'.
Observe that the OR gadget satisfies that if its two base vertices are colored $f$ then so is the top vertex.
Repeatedly applying this observation, we get that the top vertex of the last OR gadget of this clause is colored $f$.
However, by the construction of $G$, this vertex is identified with the vertex $t$ whose color is $t$, contradicting our assumption.
\end{proof}

The following corollary summarizes the soundness of the reduction and follows immediately from Lemmas~\ref{lemma:sound1} and~\ref{lemma:sound2}.
\begin{corollary}\label{cor:sound}
For every field $\Fset$, if $\od_l(G',\Fset) \leq 3$ then $\varphi$ is satisfiable.
\end{corollary}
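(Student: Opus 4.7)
The plan is to obtain Corollary~\ref{cor:sound} as an immediate consequence of the two soundness lemmas that precede it in the excerpt, namely Lemma~\ref{lemma:sound1} and Lemma~\ref{lemma:sound2}. Concretely, I would fix an arbitrary field $\Fset$ and start from the hypothesis $\od_l(G',\Fset) \leq 3$. Invoking Lemma~\ref{lemma:sound1} with this field immediately gives $\chi(G) \leq 3$, where $G$ is the intermediate graph produced by the first step of the reduction. Then Lemma~\ref{lemma:sound2}, whose statement is purely combinatorial, converts this proper $3$-coloring of $G$ into a satisfying assignment of $\varphi$, completing the chain of implications.

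Since both lemmas have already been established in the previous subsections, no additional combinatorial or algebraic work is required for the corollary itself; the proof reduces to a one-line composition of the two implications. The only subtlety to mention for the reader is that the field $\Fset$ only enters through Lemma~\ref{lemma:sound1}, and that from that point on the argument is field-independent, which is the reason the statement of the corollary is uniform in $\Fset$.

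There is essentially no obstacle to overcome here: the two main technical ingredients, the gadget analysis encapsulated in Lemma~\ref{lemma:Hij} (used inside Lemma~\ref{lemma:sound1}) and the OR-gadget chain analysis (used inside Lemma~\ref{lemma:sound2}), have already absorbed all of the real work. The proof will thus be a brief remark that chains the two implications to conclude that $\od_l(G',\Fset) \leq 3$ implies that $\varphi$ is satisfiable, as claimed.
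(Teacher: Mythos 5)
Your proof is correct and matches the paper exactly: the paper states that the corollary "follows immediately from Lemmas~\ref{lemma:sound1} and~\ref{lemma:sound2}," which is precisely the one-line composition of implications you describe.
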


\subsubsection{Proof of Theorem~\ref{thm:hardness}}\label{sec:all_together}

\begin{proof}
Let $\Fset$ be a field.
For $k = 3$, the $\NP$-hardness result follows from the reduction presented in Section~\ref{sec:reduction}, whose correctness follows from Corollaries~\ref{cor:complete} and~\ref{cor:sound}.

For $k \geq 4$, consider the reduction from $\SAT$ that given a CNF formula $\varphi$ outputs the graph $G''$ obtained from the graph $G'$, defined in Section~\ref{sec:reduction}, by adding to it a complete graph on $k-3$ vertices and connecting them to all the vertices of $G'$. The graph $G''$ can obviously be constructed efficiently.
We turn to prove the correctness of the reduction.

For completeness, recall that if $\varphi$ is satisfiable, then by Lemmas~\ref{lemma:complete1} and~\ref{lemma:complete2}, $\chi(G') \leq 3$. A proper coloring of $G'$ can be extended to a proper coloring of $G''$ by assigning to the $k-3$ vertices of the added complete graph new distinct colors. It follows that $\chi(G'') \leq k$, so in particular, using~\eqref{eq:od_l_vs_chi_l}, $\od_l(G'',\Fset) \leq k$.

For soundness, suppose that $\od_l(G'',\Fset) \leq k$, that is, there exists an orthogonal representation of $G''$ over $\Fset$ with locality at most $k$.
Consider some vertex of $G''$ that was added to $G'$, and let $U$ be the subspace spanned by the vectors of its closed neighborhood.
By locality, we have $\dim(U) \leq k$. Let $W$ be the subspace of $U$ that consists of all the vectors of $U$ that are orthogonal to the $k-3$ vectors of the vertices of the added complete graph. Since these vectors are pairwise orthogonal, are not self-orthogonal, and belong to $U$, it follows that $\dim(W) \leq k-(k-3)=3$. In addition, since the vertices of $G'$ are adjacent to the vertices of the added complete graph, it follows that all of their vectors lie in $W$. Hence, the restriction of the given orthogonal representation to the vertices of $G'$ yields that $\od_l(G',\Fset) \leq \dim(W) \leq 3$, so by Corollary~\ref{cor:sound}, $\varphi$ is satisfiable, and we are done.
\end{proof}

\section{Local Orthogonality Dimension and Index Coding}\label{sec:index_coding}

We start this section with a brief background on the minrank parameter, whose study is motivated by the linear index coding problem.
We then review, in a generalized form, the connection given in~\cite{shanmugamKDALM2013} between the local chromatic number and the minrank parameter.
Finally, we relate the local orthogonality dimension of graphs to the minrank parameter and prove Theorem~\ref{thm:binaryOD}.

\subsection{Minrank}\label{sec:minrank}

It was proved in~\cite{BBJK06} that the optimal length of a solution to the linear index coding problem over a field $\Fset$ is precisely the minrank of the corresponding side information graph over $\Fset$. The minrank parameter is defined as follows.

\begin{definition}[Minrank~\cite{Haemers81}]\label{def:minrank}
Let $G=(V,E)$ be a graph on the vertex set $V=[n]$ and let $\Fset$ be a field.
We say that a matrix $M \in \Fset^{n \times n}$ {\em represents} $G$ if $M_{i,i} \neq 0$ for every $i \in V$, and $M_{i,j}=0$ for every distinct non-adjacent vertices $i$ and $j$.
The {\em minrank} of $G$ over $\Fset$ is defined as
\[{\minrank}_\Fset(G) =  \min\{{\rank}_{\Fset}(M)\mid M \mbox{ represents }G\mbox{ over }\Fset\}.\]
\end{definition}

We remark that the orthogonality dimension of a graph over a field $\Fset$ is closely related to the minrank over $\Fset$ of the complement graph.
To see this, consider a graph $G$ on $n$ vertices, and let us interpret the rows of a matrix $A \in \Fset^{n \times t}$ as an assignment of $t$-dimensional vectors to the vertices of $G$. It follows that $\od(G,\Fset)$ is the smallest integer $t$ for which there exists a matrix $A \in \Fset^{n \times t}$ such that $A \cdot A^T$ represents the complement $\overline{G}$ of $G$.
On the other hand, Definition~\ref{def:minrank} implies that ${\minrank}_\Fset(\overline{G})$ is the smallest integer $t$ for which there exist two matrices $A$ and $B$ in $\Fset^{n \times t}$ such that $A \cdot B^T$ represents $\overline{G}$ over $\Fset$.
With these definitions in mind, the only difference between $\od(G,\Fset)$ and ${\minrank}_\Fset(\overline{G})$ is in the requirement of $A$ and $B$ to be equal in the former. In particular, for every graph $G$ and every field $\Fset$, ${\minrank}_\Fset(\overline{G}) \leq \od(G,\Fset)$.

Another characterization of the minrank parameter is given by the following lemma and relies on the notion of independent representations of graphs (recall Definition~\ref{def:ind_rep}).
For a proof, see, e.g.,~\cite[Lemma~5]{AlishahiM21}.

\begin{lemma}\label{lemma:minrank}
For every graph $G$ and a field $\Fset$, ${\minrank}_\Fset(\overline{G})$ is the smallest integer $t$ for which there exists a $t$-dimensional independent representation of $G$ over $\Fset$.
\end{lemma}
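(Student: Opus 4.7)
The plan is to prove the two inequalities separately, exploiting the duality between a rank-$t$ factorization of a representing matrix and a pair of $t$-dimensional vector assignments to the vertices.

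For the upper bound $\minrank_\Fset(\overline{G}) \leq t$, I would start from a $t$-dimensional independent representation $(u_v)_{v \in V}$ of $G$ over $\Fset$. The defining condition $u_v \notin \linspan(\{u_{v'} \mid v' \in N(v)\})$ guarantees, by the standard fact that a vector outside a subspace can be separated from it by a linear functional, a vector $w_v \in \Fset^t$ with $\langle u_{v'}, w_v \rangle = 0$ for every $v' \in N(v)$ and $\langle u_v, w_v \rangle \neq 0$. Defining $M \in \Fset^{n \times n}$ by $M_{i,j} = \langle u_i, w_j \rangle$ exhibits $M$ as $A B^T$, where the rows of $A,B \in \Fset^{n \times t}$ are the $u_v$'s and $w_v$'s respectively, so $\rank(M) \leq t$. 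I would then check that $M$ represents $\overline{G}$: the diagonal entries are nonzero by construction, and for distinct $i,j$ that are non-adjacent in $\overline{G}$, i.e., adjacent in $G$, we have $j \in N(i)$ and $i \in N(j)$, which forces both $M_{i,j}$ and $M_{j,i}$ to vanish.

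For the lower bound $\minrank_\Fset(\overline{G}) \geq t$, I would take a matrix $M$ of rank $t = \minrank_\Fset(\overline{G})$ representing $\overline{G}$, factor it as $M = A B^T$ with $A,B \in \Fset^{n \times t}$, and let $u_v$ and $w_v$ denote the $v$-th rows of $A$ and $B$. I claim that $(u_v)_{v \in V}$ is a $t$-dimensional independent representation of $G$ over $\Fset$. If, toward a contradiction, $u_v = \sum_{v' \in N(v)} \alpha_{v'} u_{v'}$ for some scalars, then taking the inner product of both sides with $w_v$ yields $M_{v,v} = \sum_{v' \in N(v)} \alpha_{v'} M_{v',v}$, and every term in the sum vanishes since each such $v'$ is adjacent to $v$ in $G$ and hence non-adjacent to $v$ in $\overline{G}$. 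This contradicts $M_{v,v} \neq 0$.

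I do not anticipate a serious obstacle: the argument essentially unfolds the minrank definition through a rank-$t$ factorization. The only care needed is to track the complementation carefully, since the zero pattern of a matrix representing $\overline{G}$ is dictated by non-adjacency in $\overline{G}$, which corresponds precisely to adjacency in $G$ and thus to membership in the neighborhoods $N(v)$ used to define an independent representation.
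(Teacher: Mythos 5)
Your proof is correct. The paper itself does not prove Lemma~\ref{lemma:minrank}; it cites~\cite[Lemma~5]{AlishahiM21}, but the surrounding discussion in Section~\ref{sec:minrank} already sketches exactly the rank-factorization viewpoint you use, namely that ${\minrank}_\Fset(\overline{G})$ is the smallest $t$ for which $A\cdot B^T$ represents $\overline{G}$ for some $A,B \in \Fset^{n\times t}$. Your two directions---constructing the separating functionals $w_v$ to build $M=AB^T$ from an independent representation, and conversely reading off an independent representation from the rows of $A$ in a rank-$t$ factorization of a representing matrix---are precisely the standard argument, carried out correctly with the complementation bookkeeping handled properly (adjacency in $G$ forces the off-diagonal zeros of a matrix representing $\overline{G}$). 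No gap.
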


\subsection{Local Chromatic Number and Minrank}

The following statement shows how a proper coloring of a graph can be combined with an appropriate collection of vectors to obtain an upper bound on the minrank of the complement graph. The proof relies on an idea of~\cite{shanmugamKDALM2013}.

\begin{proposition}\label{prop:local_minrank_gen}
Let $G=(V,E)$ be a graph and let $\Fset$ be a field.
Suppose that there exist a proper coloring $c: V \rightarrow [m]$ of $G$ and $m$ vectors $u_1,\ldots,u_m \in \Fset^t$, such that for every vertex $v \in V$ the vectors of
\[\{ u_{c(v')} \mid v' \in \{v\} \cup N(v)\},\]
that are indexed by the colors of the closed neighborhood of $v$, are linearly independent over $\Fset$. Then,
\[{\minrank}_\Fset(\overline{G}) \leq t.\]
\end{proposition}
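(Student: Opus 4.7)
The plan is to produce an explicit $t$-dimensional independent representation of $G$ over $\Fset$ and then invoke Lemma~\ref{lemma:minrank} to conclude that $\minrank_\Fset(\overline{G}) \leq t$. The natural candidate representation is obtained by pulling back the coloring: assign to every vertex $v \in V$ the vector $\widetilde{u}_v = u_{c(v)} \in \Fset^t$.

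To verify that $(\widetilde{u}_v)_{v \in V}$ is indeed an independent representation of $G$, fix an arbitrary vertex $v \in V$, and consider the subspace $S_v = \linspan(\{\widetilde{u}_{v'} \mid v' \in N(v)\}) = \linspan(\{u_{c(v')} \mid v' \in N(v)\})$. I need to show that $\widetilde{u}_v = u_{c(v)} \notin S_v$. By the hypothesis of the proposition, the family $\{u_{c(v')} \mid v' \in \{v\} \cup N(v)\}$ is linearly independent over $\Fset$; in particular, its members are distinct, so $u_{c(v)}$ is not equal to any $u_{c(v')}$ with $v' \in N(v)$. Since $c$ is a proper coloring, the color $c(v)$ differs from $c(v')$ for every neighbor $v'$, which is consistent with this. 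Now, if $u_{c(v)}$ were a linear combination of the vectors $\{u_{c(v')} \mid v' \in N(v)\}$, then the set $\{u_{c(v')} \mid v' \in \{v\} \cup N(v)\}$ would be linearly dependent, contradicting the hypothesis. Therefore $u_{c(v)} \notin S_v$, as required.

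This establishes that $(\widetilde{u}_v)_{v \in V}$ is a $t$-dimensional independent representation of $G$ over $\Fset$. By Lemma~\ref{lemma:minrank}, the minrank of the complement satisfies $\minrank_\Fset(\overline{G}) \leq t$, which completes the proof.

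I do not expect a genuine obstacle here; the only subtle point is making sure the set notation $\{u_{c(v')} \mid v' \in \{v\} \cup N(v)\}$ is interpreted so that the linear independence assumption is a meaningful, usable hypothesis. Since $c$ is proper, $c(v) \neq c(v')$ for $v' \in N(v)$, so $u_{c(v)}$ is guaranteed to be one of the listed vectors, and the argument above applies verbatim.
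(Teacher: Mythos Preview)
Your proof is correct and follows essentially the same approach as the paper: assign $u_{c(v)}$ to each vertex, observe that the hypothesis forces $u_{c(v)}$ to lie outside the span of its neighbors' vectors, and apply Lemma~\ref{lemma:minrank}. Your version is slightly more explicit about the set-notation subtlety, but the argument is the same.
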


\begin{proof}
Given a coloring $c: V \rightarrow [m]$ and vectors $u_1,\ldots,u_m \in \Fset^t$ as above, assign to each vertex $v \in V$ the vector $u_{c(v)}$. The assumption implies that for every vertex $v \in V$, the vector $u_{c(v)}$ of $v$ does not belong to the linear span of the vectors of its neighbors in $G$. It thus follows that this assignment forms a $t$-dimensional independent representation of $G$ over $\Fset$, hence by Lemma~\ref{lemma:minrank}, $\minrank_\Fset(\overline{G}) \leq t$, as desired.
\end{proof}

As an immediate application of Proposition~\ref{prop:local_minrank_gen}, we obtain the following.

\begin{proposition}\label{prop:local_minrank}
Let $G=(V,E)$ be a graph and let $\Fset$ be a field.
Suppose that there exist a proper coloring $c: V \rightarrow [m]$ of $G$ with locality $\ell$ and $m$ vectors in $\Fset^t$ such that every $\ell$ of them are linearly independent over $\Fset$. Then, $\minrank_\Fset(\overline{G}) \leq t$.
\end{proposition}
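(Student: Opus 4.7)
The plan is to deduce this statement as an immediate corollary of Proposition~\ref{prop:local_minrank_gen}, by verifying that its hypothesis holds under the assumptions given here. Recall that the locality of the proper coloring $c : V \rightarrow [m]$ means that for every vertex $v \in V$, the closed neighborhood $\{v\} \cup N(v)$ receives at most $\ell$ distinct colors.

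Given the coloring $c$ and the $m$ vectors $u_1,\ldots,u_m \in \Fset^t$ promised by the hypothesis, I would consider, for each vertex $v \in V$, the set
\[ S_v = \{u_{c(v')} \mid v' \in \{v\} \cup N(v)\}.\]
Because $|c(\{v\} \cup N(v))| \leq \ell$, the set $S_v$ consists of at most $\ell$ distinct vectors chosen from $u_1,\ldots,u_m$. By assumption, every $\ell$-subset of $\{u_1,\ldots,u_m\}$ is linearly independent over $\Fset$, and since any subset of a linearly independent set is itself linearly independent, the set $S_v$ is linearly independent over $\Fset$. This is exactly the hypothesis of Proposition~\ref{prop:local_minrank_gen}, which then yields $\minrank_\Fset(\overline{G}) \leq t$.

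There is no real obstacle; the only subtlety is noticing that the condition of Proposition~\ref{prop:local_minrank_gen} asks for linear independence of a set whose size is bounded by the locality of the coloring, so the stronger pointwise hypothesis of Proposition~\ref{prop:local_minrank_gen} follows from the uniform ``every $\ell$ vectors are independent'' assumption stated here.
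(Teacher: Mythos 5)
Your argument is correct and is exactly the one the paper intends: the paper states Proposition~\ref{prop:local_minrank} as an ``immediate application'' of Proposition~\ref{prop:local_minrank_gen} without writing out the verification, and your proposal supplies precisely that verification, including the small but necessary observation that a set of fewer than $\ell$ of the given vectors is also linearly independent.
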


It is well known that for all integers $\ell \leq m$ and for every field $\Fset$ of size at least $m$, there exist $m$ vectors in $\Fset^\ell$ such that every $\ell$ of them are linearly independent over $\Fset$. Indeed, let $\alpha_1,\ldots,\alpha_m \in \Fset$ be distinct elements of the field, and consider the vectors $(1,\alpha_i,\alpha_i^2, \ldots, \alpha_i^{\ell-1}) \in \Fset^\ell$ for $i \in [m]$. By standard properties of Vandermonde matrices, every $\ell$ of these vectors are linearly independent over $\Fset$.
Applying Proposition~\ref{prop:local_minrank} with such collections of vectors, we derive the following.

\begin{proposition}\label{prop:local_minrank_n}
Let $G$ be a graph on $n$ vertices and let $\Fset$ be a field.
If there exists a proper coloring $c: V \rightarrow [m]$ of $G$ with locality $\ell$ such that $|\Fset| \geq m$, then $\minrank_\Fset(\overline{G}) \leq \ell$.
In particular, if $|\Fset| \geq n$, then $\minrank_\Fset(\overline{G}) \leq \chi_l(G)$.
\end{proposition}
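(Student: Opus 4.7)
The plan is to apply Proposition~\ref{prop:local_minrank} directly, using the explicit Vandermonde-based construction sketched in the paragraph preceding the proposition statement. So the first step will be to take the hypothesized proper coloring $c: V \to [m]$ with locality $\ell$ and observe that, since $|\Fset| \geq m$, we may fix $m$ distinct elements $\alpha_1,\ldots,\alpha_m \in \Fset$. I would then define the $m$ vectors $v_i = (1,\alpha_i,\alpha_i^2,\ldots,\alpha_i^{\ell-1}) \in \Fset^\ell$ for $i \in [m]$.

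Next, I would verify the linear-independence condition required by Proposition~\ref{prop:local_minrank}: any $\ell$ of the vectors $v_{i_1},\ldots,v_{i_\ell}$ form the rows of a square $\ell \times \ell$ Vandermonde matrix whose parameters $\alpha_{i_1},\ldots,\alpha_{i_\ell}$ are distinct, so its determinant $\prod_{j<k}(\alpha_{i_k}-\alpha_{i_j})$ is nonzero in $\Fset$. Hence every $\ell$-subset of $\{v_1,\ldots,v_m\}$ is linearly independent over $\Fset$. Combined with the coloring $c$ of locality $\ell$, Proposition~\ref{prop:local_minrank} immediately yields $\minrank_\Fset(\overline{G}) \leq \ell$, establishing the first assertion.

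For the ``in particular'' statement, I would take a proper coloring of $G$ that achieves the local chromatic number, i.e., one whose locality equals $\chi_l(G)$. Such a coloring uses at most $n$ colors (since any proper coloring uses at most one color per vertex, and we may always reduce to a coloring on at most $n$ colors without affecting the locality). Thus $m \leq n \leq |\Fset|$, so the first part applies with $\ell = \chi_l(G)$, yielding $\minrank_\Fset(\overline{G}) \leq \chi_l(G)$.

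I do not anticipate a genuine obstacle: the claim is essentially a packaging of the Vandermonde observation together with the already-established Proposition~\ref{prop:local_minrank}. The only mild care needed is to note that ``$m$ colors are used'' can be taken to mean $m \leq n$ in the second part, which is immediate.
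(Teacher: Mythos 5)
Your proof is correct and follows exactly the paper's own route: instantiate Proposition~\ref{prop:local_minrank} with the Vandermonde vectors $(1,\alpha_i,\ldots,\alpha_i^{\ell-1})$ for distinct $\alpha_i\in\Fset$, noting that any $\ell$ of them are linearly independent, and for the second part observe that a locality-optimal coloring can be taken to use at most $n$ colors. No differences from the paper's argument.
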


As mentioned earlier, it was shown in~\cite{SimonyiT06} that there exist topologically $t$-chromatic graphs $G$ satisfying $\chi(G)=t$ and yet $\chi_l(G) = \lceil t/2 \rceil +1$.
By Proposition~\ref{prop:local_minrank_n}, it follows that they also satisfy ${\minrank}_\Fset(\overline{G}) \leq \lceil t/2 \rceil +1$ for every sufficiently large field $\Fset$. This shows that the lower bound on the minrank of the complements of topologically $t$-chromatic graphs, given in~\eqref{eq:AlishahiM} and proved in~\cite{AlishahiM21}, is tight on these graphs.

However, if the field $\Fset$ is not sufficiently large, collections of vectors in $\Fset^\ell$ such that every $\ell$ of them are linearly independent over $\Fset$ do not exist (see, e.g.,~\cite{Ball12}). Hence, given a graph $G=(V,E)$, a proper coloring $c: V \rightarrow [m]$ with locality $\ell$, and a field $\Fset$, one would like to apply Proposition~\ref{prop:local_minrank_gen} with a collection of $m$ vectors $u_1,\ldots,u_m \in \Fset^t$ of as small dimension $t$ as possible, such that the vectors associated with the colors of every closed neighborhood are linearly independent over $\Fset$. Interestingly, the problem of minimizing the dimension $t$ with respect to such constraints was studied by Schulman~\cite{Schulman92} (see also~\cite{HavivL19}) in the context of economical constructions of sample spaces of binary vectors with uniform restrictions to given sets of coordinates.
The following simple lemma is essentially given in~\cite{Schulman92}, and we include here a quick proof for completeness.

\begin{lemma}[\cite{Schulman92}]\label{lemma:Schulman}
Let $\calH$ be a collection of $h$ subsets of $[m]$, of size at most $\ell$ each, and let $\Fset$ be a finite field of size $q$. Put $t = \ell+ \lceil \log_q h \rceil$.
Then, there exist vectors $u_1, \ldots, u_m \in \Fset^t$ such that for every $H \in \calH$, the vectors of $\{ u_i \mid i \in H\}$ are linearly independent over $\Fset$.
\end{lemma}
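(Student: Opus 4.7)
The plan is to prove Lemma~\ref{lemma:Schulman} by the probabilistic method. Specifically, I would pick $u_1, \ldots, u_m$ independently and uniformly at random from $\Fset^t$ and show that, with positive probability, the resulting collection satisfies that $\{u_i \mid i \in H\}$ is linearly independent over $\Fset$ for every $H \in \calH$.

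For a fixed $H \in \calH$ of size $s \leq \ell$, I would estimate the probability that the $s$ chosen vectors are linearly dependent. Since the number of ordered $s$-tuples of linearly independent vectors in $\Fset^t$ is $\prod_{j=0}^{s-1}(q^t - q^j)$, the probability that the chosen tuple is linearly independent equals $\prod_{j=0}^{s-1}(1 - q^{j-t})$. Invoking the elementary inequality $1 - \prod_j(1-x_j) \leq \sum_j x_j$ valid for $x_j \in [0,1]$, the probability of linear dependence is bounded by
\[ \sum_{j=0}^{s-1} q^{j-t} \;=\; \frac{q^s - 1}{(q-1)\, q^t} \;\leq\; \frac{q^\ell - 1}{(q-1)\, q^t}. \]

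It then suffices to verify that with $t = \ell + \lceil \log_q h \rceil$ the right-hand side is strictly less than $1/h$, for a union bound over the $h$ sets in $\calH$ would then yield total failure probability strictly below one, from which a realization with the desired property must exist. For $q \geq 3$, the slack is easy: $(q^\ell - 1)/((q-1) q^t) < q^{\ell - t}/(q-1) \leq 1/((q-1)h) < 1/h$. For $q = 2$ the factor $q - 1 = 1$ offers no cushion, and this edge case is the main (minor) obstacle: here I would argue $(2^\ell - 1)/2^t < 2^\ell / 2^t = 1/2^{\lceil \log_2 h \rceil} \leq 1/h$, where the strict inequality $2^\ell - 1 < 2^\ell$ supplies exactly the slack needed to close the union bound even when $\log_2 h$ happens to be an integer. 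With the per-set bound tightened this way, the union bound finishes the proof.
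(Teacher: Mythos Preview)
Your probabilistic argument is correct, including the delicate $q=2$ case where the strict inequality $2^{\ell}-1 < 2^{\ell}$ supplies just enough slack for the union bound. The paper, however, proceeds differently: instead of sampling all $u_j$ independently and applying a union bound, it builds the vectors greedily. At step $j$ one must avoid, for each $H \in \calH$ containing $j$, the span of the already-chosen vectors $\{u_i \mid i \in H,\, i<j\}$; since each such span has at most $q^{\ell-1}$ elements, the forbidden set has size at most $h \cdot q^{\ell-1} < q^{t}$, so a valid choice exists. The greedy argument is slightly more elementary---it avoids the exact count of independent tuples and the separate $q=2$ analysis---and is immediately constructive. Your approach, on the other hand, is a clean single-shot probabilistic proof and makes explicit that a uniformly random choice already works with positive probability; the price is the small case split you identified.
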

\begin{proof}
The required $m$ vectors $u_1, \ldots, u_m \in \Fset^t$ are chosen one by one in $m$ steps as follows.
For every $j \in [m]$, the vector $u_j$ is chosen to be any vector of $\Fset^t$ such that for every $H \in \calH$ with $j \in H$, $u_j$ does not lie in the linear span of the vectors of $\{ u_i \mid i \in H,~i<j\}$. Note that the vectors of such a set span a subspace of dimension at most $\ell-1$. Hence, the total number of forbidden vectors in every step is at most $h \cdot q^{\ell-1} < q^t$, where the inequality follows from the definition of $t$. This ensures that the $m$ vectors can be chosen successfully, satisfying the required condition.
\end{proof}

Now, by combining Proposition~\ref{prop:local_minrank_gen} with Lemma~\ref{lemma:Schulman}, we reprove the result of~\cite{shanmugamKDALM2013} for the binary field (with a slightly better multiplicative constant).
\begin{corollary}[\cite{shanmugamKDALM2013}]\label{cor:localSDL}
For every graph $G$ on $n$ vertices, $\minrank_{\Fset_2}(G) \leq \chi_l(\overline{G}) + \lceil \log_2 n \rceil$.
\end{corollary}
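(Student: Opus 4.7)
The plan is to combine Proposition~\ref{prop:local_minrank_gen} with Lemma~\ref{lemma:Schulman} in essentially the most direct way, applying the proposition with the roles of $G$ and $\overline{G}$ swapped. That is, to bound $\minrank_{\Fset_2}(G)$, I will use Proposition~\ref{prop:local_minrank_gen} on the graph $\overline{G}$, since it outputs an upper bound on the minrank of the complement of its input.

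Concretely, set $\ell = \chi_l(\overline{G})$ and fix a proper coloring $c : V \to [m]$ of $\overline{G}$ with locality $\ell$ (for some number of colors $m$, which may be as large as $n$). For each vertex $v \in V$, let
\[ H_v = \{ c(v') \mid v' \in \{v\} \cup N_{\overline{G}}(v) \} \subseteq [m], \]
which is the set of colors appearing in the closed neighborhood of $v$ in $\overline{G}$. By definition of locality, $|H_v| \leq \ell$. Let $\calH = \{H_v \mid v \in V\}$; then $\calH$ is a collection of at most $n$ subsets of $[m]$, each of size at most $\ell$.

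Now apply Lemma~\ref{lemma:Schulman} to $\calH$ over $\Fset_2$ (so $q = 2$ and $h \leq n$), obtaining vectors $u_1, \ldots, u_m \in \Fset_2^t$ with
\[ t = \ell + \lceil \log_2 n \rceil, \]
such that for every $v \in V$, the vectors $\{u_i \mid i \in H_v\}$ are linearly independent over $\Fset_2$. This is exactly the hypothesis of Proposition~\ref{prop:local_minrank_gen} applied to the graph $\overline{G}$ with the coloring $c$ and the vectors $u_1, \ldots, u_m$. The conclusion is
\[ \minrank_{\Fset_2}(\overline{\overline{G}}) = \minrank_{\Fset_2}(G) \leq t = \chi_l(\overline{G}) + \lceil \log_2 n \rceil, \]
which is the desired inequality.

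There is no real obstacle here, as both ingredients are already in place: the proof is simply a matter of assembling them correctly, and in particular of noting that the relevant collection of ``independence constraints'' $\calH$ has size at most $n$ (one constraint per vertex of $\overline{G}$), so that $\lceil \log_2 h \rceil \leq \lceil \log_2 n \rceil$ is the overhead charged by Lemma~\ref{lemma:Schulman} over the locality $\ell$. The only minor care needed is to apply Proposition~\ref{prop:local_minrank_gen} to $\overline{G}$ rather than to $G$ itself, so that its complement in the conclusion becomes $G$.
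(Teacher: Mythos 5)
Your proof is correct and takes essentially the same route as the paper: both combine Lemma~\ref{lemma:Schulman} (with $q=2$ and $h\le n$) with Proposition~\ref{prop:local_minrank_gen}, using the sets of colors in closed neighborhoods as the collection $\calH$. The only cosmetic difference is that you apply the proposition directly to $\overline{G}$, while the paper applies it to $G$ and swaps $G$ with $\overline{G}$ at the end; the argument is identical.
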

\begin{proof}
Let $G=(V,E)$ be a graph $n$ vertices, and put $\ell = \chi_l(G)$. Then, for some integer $m$ there exists a proper coloring $c:V \rightarrow [m]$ of $G$ with locality $\ell$.
Let $\calH$ be the collection of the $n$ sets of colors of the closed neighborhoods of the vertices of $G$. By locality, the size of every such set is at most $\ell$, hence by Lemma~\ref{lemma:Schulman}, there exist vectors $u_1, \ldots, u_m \in \Fset_2^t$ for $t = \ell+ \lceil \log_2 n \rceil$, such that the vectors of $\{ u_i \mid i \in H\}$ are linearly independent over $\Fset_2$ for every $H \in \calH$. Applying Proposition~\ref{prop:local_minrank_gen}, it follows that $\minrank_{\Fset_2}(\overline{G}) \leq t$. The proof is completed by switching the roles of $G$ and of its complement.
\end{proof}

\subsection{Local Orthogonality Dimension and Minrank}

We finally prove Theorem~\ref{thm:binaryOD}, showing that the local chromatic number in Corollary~\ref{cor:localSDL} can be replaced by the local orthogonality dimension over $\Fset_2$.
The proof uses a probabilistic argument. We first prove the following lemma.

\begin{lemma}\label{lemma:independent}
For a finite field $\Fset$ and integers $t$ and $m$, let $D \subseteq \Fset^t$ be a set of linearly independent vectors over $\Fset$, and let $A \in \Fset^{m \times t}$ be a uniformly chosen random matrix over $\Fset$.
Then, the vectors $A \cdot w$ with $w \in D$ are distributed uniformly and independently over $\Fset^m$.
\end{lemma}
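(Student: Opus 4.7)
The plan is to reduce the statement to a single-row analysis by exploiting the independence of the rows of $A$, and then to reduce that to the elementary fact that a surjective linear map between finite $\Fset$-vector spaces pushes the uniform distribution forward to the uniform distribution.

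First, I would write $A$ in terms of its rows: $A$ has rows $r_1, \ldots, r_m \in \Fset^t$, and since $A$ is uniform over $\Fset^{m \times t}$, the $r_i$ are independent and each uniform over $\Fset^t$. For every $w \in D$, the $i$-th coordinate of $A \cdot w$ equals $\langle r_i, w \rangle$. Hence, grouping the $|D|$-tuple $(A \cdot w)_{w \in D} \in (\Fset^m)^{|D|}$ coordinate by coordinate in $m$, its $i$-th ``slice'' is the vector $(\langle r_i, w \rangle)_{w \in D} \in \Fset^{|D|}$, and slices corresponding to different values of $i$ are independent because the $r_i$ are independent. Therefore, to prove the lemma, it suffices to show that for a single uniformly chosen $r \in \Fset^t$, the vector $(\langle r, w \rangle)_{w \in D} \in \Fset^{|D|}$ is distributed uniformly.

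Next, I would view the map $r \mapsto (\langle r, w \rangle)_{w \in D}$ as a linear map $\Fset^t \to \Fset^{|D|}$. Arranging the vectors of $D$ as the columns of a $t \times |D|$ matrix $W$, this map is $r \mapsto r^T W$. Linear independence of the vectors of $D$ over $\Fset$ says exactly that $W$ has rank $|D|$, so the map is surjective.

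Finally, I would invoke the standard fact that any surjective $\Fset$-linear map $L : \Fset^t \to \Fset^{|D|}$ sends the uniform distribution on $\Fset^t$ to the uniform distribution on $\Fset^{|D|}$: each fiber $L^{-1}(y)$ is a coset of $\ker L$ and hence has size $|\Fset|^{t-|D|}$, independent of $y$. Combining this with the row-wise independence from the first step yields the claim. There is no serious obstacle here; the only thing to be careful about is bookkeeping the distinction between the joint $|D|$-tuple of $m$-dimensional vectors and the $m$-tuple of $|D|$-dimensional ``slices,'' which is what makes the row decomposition the right first move.
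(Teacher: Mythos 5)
Your proposal is correct and takes essentially the same approach as the paper: both reduce to the rows of $A$, observe that for a fixed row the relevant linear constraints cut out an affine subspace of dimension $t - |D|$ (equivalently, that the map $r \mapsto (\langle r,w\rangle)_{w\in D}$ is surjective with fibers of equal size), and combine this with the independence of the rows. The paper phrases this as a direct count of matrices $A$ mapping $D$ to a prescribed tuple, while you phrase it as push-forward of the uniform distribution under a surjective linear map, but the underlying computation is identical.
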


\begin{proof}
Let $D = \{w^{(1)},\ldots,w^{(r)}\} \subseteq \Fset^t$ be a set of $r$ linearly independent vectors over $\Fset$, and let $A \in \Fset^{m \times t}$ be a uniformly chosen random matrix over $\Fset$.
It is clear that for every $i \in [r]$, the vector $w^{(i)}$ is nonzero, hence the vector $A \cdot w^{(i)}$ is uniformly distributed over $\Fset^m$.
To prove that the vectors $A \cdot w^{(i)}$ with $i \in [r]$ are distributed independently, it suffices to show that the number of matrices $A \in \Fset^{m \times t}$ satisfying $A \cdot w^{(i)} = b^{(i)}$ for all $i \in [r]$ for an $r$-tuple of vectors $b^{(1)},\ldots,b^{(r)} \in \Fset^m$ is independent of the choice of the $r$-tuple.

To do so, fix such $b^{(1)},\ldots,b^{(r)} \in \Fset^m$ and let $a^{(1)},\ldots,a^{(m)}$ denote the $m$ random rows of the matrix $A$. Notice that for every $i \in [r]$, it holds that $A \cdot w^{(i)} = b^{(i)}$ if and only if
\begin{eqnarray}\label{eq:a^j}
\langle a^{(j)} , w^{(i)} \rangle = b^{(i)}_j
\end{eqnarray}
for all $j \in [m]$. Since the vectors of $D$ are linearly independent over $\Fset$, it follows that for every $j \in [m]$, the vectors $a^{(j)}$ satisfying~\eqref{eq:a^j} for all $i \in [r]$ form an affine subspace over $\Fset$ of dimension $t-r$, hence their number is precisely $|\Fset|^{t-r}$. This implies that the number of matrices $A$ satisfying $A \cdot w^{(i)} = b^{(i)}$ for all $i \in [r]$ is $|\Fset|^{m \cdot (t-r)}$, independently of the vectors $b^{(1)},\ldots,b^{(r)}$, as required.
\end{proof}

We are ready to prove the following theorem, which confirms Theorem~\ref{thm:binaryOD}.
\begin{theorem}\label{thm:minrk_localOD_gen}
For every graph $G$ on $n$ vertices and a finite field $\Fset$ of size $q$,
\[{\minrank}_{\Fset}(G) \leq \od_l(\overline{G},\Fset)+\lceil \log_q n \rceil.\]
\end{theorem}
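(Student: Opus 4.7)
The plan is to apply Lemma~\ref{lemma:minrank}, which lets us bound $\minrank_\Fset(G)$ by the smallest dimension of an independent representation of $\overline{G}$ over $\Fset$, and to construct such a representation by randomly projecting an orthogonal representation that witnesses $\od_l(\overline{G},\Fset)$. Set $\ell = \od_l(\overline{G},\Fset)$, fix an orthogonal representation $(u_v)_{v\in V}$ of $\overline{G}$ over $\Fset$ of locality $\ell$ in some dimension $t$, and recall from Remark~\ref{remark:ortho_vs_ind} that it is also an independent representation of $\overline{G}$. I will then show, by a probabilistic argument in the spirit of Lemma~\ref{lemma:Schulman}, that for a uniformly random $A \in \Fset^{m \times t}$ with $m = \ell + \lceil \log_q n \rceil$, the projected family $(A u_v)_{v \in V}$ is an $m$-dimensional independent representation of $\overline{G}$ with positive probability; combined with Lemma~\ref{lemma:minrank}, this yields the claimed bound on $\minrank_\Fset(G)$.

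The core of the argument is a per-vertex failure estimate. For each $v \in V$, let $N_v$ denote the neighborhood of $v$ in $\overline{G}$ and set $W_v = \linspan(\{u_{v'} : v' \in N_v\})$. Since $u_v$ is orthogonal to every $u_{v'}$ with $v' \in N_v$ while $\langle u_v, u_v\rangle \neq 0$, the vector $u_v$ does not lie in $W_v$, and the locality assumption then forces $\dim(W_v) \leq \ell - 1$. I would then fix any basis $B_v$ of $W_v$ of size $r_v \leq \ell - 1$, so that $B_v \cup \{u_v\}$ is a linearly independent set of $r_v + 1$ vectors in $\Fset^t$. Applying Lemma~\ref{lemma:independent} to this set yields that the images $\{A w : w \in B_v\}$ together with $A u_v$ are uniform and independent over $\Fset^m$. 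Since $\linspan(\{A u_{v'} : v' \in N_v\}) = A \cdot W_v = \linspan(\{A w : w \in B_v\})$, after conditioning on the images of $B_v$ this target subspace has at most $q^{r_v} \leq q^{\ell - 1}$ elements, so $A u_v$ falls into it with probability at most $q^{\ell - 1 - m}$.

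A union bound over the $n$ vertices will then bound the total failure probability by $n \cdot q^{\ell - 1 - m}$. Plugging in $m = \ell + \lceil \log_q n \rceil$ gives $n \cdot q^{-\lceil \log_q n \rceil - 1} \leq 1/q < 1$, so some matrix $A$ succeeds, producing an $m$-dimensional independent representation $(A u_v)_{v \in V}$ of $\overline{G}$; Lemma~\ref{lemma:minrank} then completes the proof. The main subtlety to watch out for is the linear independence of $B_v \cup \{u_v\}$, which is what allows us to invoke Lemma~\ref{lemma:independent} in a way that makes $A u_v$ independent of the images $\{A w : w \in B_v\}$ (and hence of the entire subspace $A \cdot W_v$). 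This step crucially uses that the starting representation is orthogonal, since an arbitrary independent representation would not guarantee that $u_v$ extends a basis of $W_v$; the orthogonality is precisely what lets us push locality through the projection.
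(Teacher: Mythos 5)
Your proposal is correct and follows essentially the same approach as the paper's proof: take an orthogonal representation of $\overline{G}$ witnessing the local orthogonality dimension, project by a uniformly random matrix $A \in \Fset^{m\times t}$, use Lemma~\ref{lemma:independent} on a linearly independent set extending a basis of the neighborhood span by $u_v$ (possible precisely because the representation is orthogonal) to bound each failure probability by $q^{\ell-1-m}$, apply a union bound, and invoke Lemma~\ref{lemma:minrank}. The only cosmetic difference is that the paper picks the basis $B(v)$ from among the neighbor vectors themselves, whereas you allow an arbitrary basis of $W_v$; both choices work identically.
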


\begin{proof}
For a graph $G=(V,E)$ on $n$ vertices and a finite field $\Fset$ of size $q$, put $\ell = \od_l(G,\Fset)$ and $m = \ell + \lceil \log_q n \rceil$.
Then, for some integer $t$, there exists a $t$-dimensional orthogonal representation $(w_v)_{v \in V}$ of $G$ with locality $\ell$.
Let $A \in \Fset^{m \times t}$ be a uniformly chosen random matrix over $\Fset$.
For every vertex $v \in V$, define $\widetilde{w}_v = A \cdot w_v \in \Fset^m$.
We turn to show that, with positive probability, the vectors $(\widetilde{w}_v)_{v \in V}$ form an $m$-dimensional independent representation of $G$.

To this end, for a vertex $v \in V$, let $B(v) \subseteq N(v)$ be a collection of vertices whose vectors $\{w_u \mid u \in B(v)\}$ in the given orthogonal representation form a basis of $\linspan (\{w_u \mid u \in N(v)\})$. Since the locality of the given orthogonal representation is $\ell$, it follows that the dimension of this subspace is at most $\ell-1$, and thus $|B(v)| \leq \ell-1$.
Now, for each vertex $v \in V$, let $L_v$ denote the event that
\[\widetilde{w}_v \in \linspan (\{\widetilde{w}_u \mid u \in N(v)\}).\]
By the definition of the set $B(v)$, it follows that
\begin{eqnarray}\label{eq:L_v}
\linspan (\{\widetilde{w}_u \mid u \in N(v)\}) = \linspan (\{\widetilde{w}_u \mid u \in B(v)\}).\end{eqnarray}
Moreover, by $|B(v)| \leq \ell-1$, the dimension of this subspace is at most $\ell-1$, hence its size is at most $q^{\ell-1}$.
Since the vectors $w_u$ with $u \in \{v\} \cup B(v)$ are linearly independent over $\Fset$, Lemma~\ref{lemma:independent} implies that the corresponding vectors $\widetilde{w}_u = A \cdot w_u$ are distributed uniformly and independently over $\Fset^m$. Hence, for every choice of the vectors $\widetilde{w}_u$ with $u \in B(v)$, the probability that the vector $\widetilde{w}_v$, which is distributed uniformly and independently of them, lies in the subspace they span is at most $q^{\ell-1}/q^m$. This yields, using~\eqref{eq:L_v}, that
\[\Prob{}{L_v} \leq \frac{q^{\ell-1}}{q^m} = \frac{q^{\ell-1}}{q^{\ell + \lceil \log_q n \rceil}} \leq \frac{1}{qn}.\]
By the union bound, the probability that there exists a vertex $v \in V$ for which the event $L_v$ occurs does not exceed $n \cdot \frac{1}{qn} = \frac{1}{q} < 1$.
This implies that there exists an $m$-dimensional independent representation of $G$ over $\Fset$, so by Lemma~\ref{lemma:minrank}, $\minrank_{\Fset}(\overline{G}) \leq m$.
The proof is completed by switching the roles of $G$ and of its complement.
\end{proof}

\section*{Acknowledgments}
We are grateful to Adam Chapman, Shoni Gilboa, and Fr{\'{e}}d{\'{e}}ric Meunier for helpful discussions and suggestions, and to the anonymous reviewers for their detailed and valuable comments on an earlier version of the paper.

\bibliographystyle{abbrv}
\bibliography{local}

\end{document}